\documentclass[11pt]{amsart}
\usepackage[margin=3.33cm]{geometry}
\usepackage{mathscinet,mathrsfs,amsmath,amsfonts,amssymb,amsthm}
\usepackage{bbm}
\usepackage{graphicx}
\usepackage[usenames,dvipsnames,svgnames,table]{xcolor}
\usepackage[linktocpage=true,colorlinks=true,linkcolor=RubineRed!85!black,citecolor=ForestGreen,urlcolor=green,pdfborder={0 0 0}]{hyperref}
\usepackage[safe]{tipa}
\usepackage{upgreek}

\theoremstyle{plain}
\newtheorem{theorem}{Theorem}[section]
\newtheorem{corollary}[theorem]{Corollary}
\newtheorem{proposition}[theorem]{Proposition}
\newtheorem{lemma}[theorem]{Lemma}
\theoremstyle{definition}
\newtheorem{definition}[theorem]{Definition}
\newtheorem{remark}[theorem]{Remark}
\numberwithin{equation}{section}

\newcommand*{\dif}{\mathop{}\!\mathrm{d}}
\def \supp {{\text{\rm supp}} \hspace{.5mm}}
\def \div {{\text{\rm div}} \hspace{.5mm}}
\def \diag {{\text{\rm diag}}}
\def \trace {{\text{\rm tr}}}
\def \det {{\text{\rm det}}}
\def \osc {{\text{\rm osc}}}
\def \dist {{\hspace{.3mm}\text{\rm dist}}}
\def \fin {f_{\rm in}}
\def \Rm {\mathcal{R}_m}
\def \p {\partial}
\def \N {\mathbb{N}}
\def \R {\mathbb{R}}
\def \I {\mathcal{I}}
\def \M {{\mathcal{M}}}
\def \NN {{\mathcal{N}\hspace{-.3mm}}}
\def \Gi {\mathcal{G}}
\def \Ge {\Gamma_{\;\!\!\;\!\!e}}
\def \BB {\mathscr{B}}
\def \T {{\mathcal{T}}}
\def \Q {\mathcal{Q}}
\def \SS {\mathcal{S}}
\def \RR {\mathcal{R}}
\def \V {\mathcal{V}}
\def \W {\mathcal{W}}
\def \U {\mathcal{U}}
\def \Y {\mathcal{Y}}
\def \Z {\mathcal{Z}}
\def \D {\mathcal{D}}
\def \OO {\mathcal{O}}

\def \mm {{\boldsymbol m}}
\def \nn {{\boldsymbol n}}
\newcommand\GG{\reflectbox{\rotatebox[origin=c]{180}{$\mathbb L$}}}
\newcommand{\la}{\langle}
\newcommand{\ra}{\rangle}
\newcommand{\lv}{\langle v\rangle}
\newcommand{\lvv}{\langle v_0\rangle}

\begin{document}
\title[Regularity of kinetic Fokker-Planck equations in bounded domains]{Regularity of kinetic Fokker-Planck equations in bounded domains}	
\date{\today}
\author{Yuzhe ZHU}
\address{D\'epartement de math\'ematiques et applications, \'Ecole normale sup\'erieure\;-\,PSL, 45 Rue d'Ulm, 75005 Paris, France}
\email{yuzhe.zhu@ens.fr}
\curraddr{Department of Mathematics, University of Chicago, 5734 S University Ave, Chicago, Illinois 60637, USA}
\email{yuzhezhu@uchicago.edu}


\begin{abstract}
We obtain the existence, uniqueness and regularity results for solutions to kinetic Fokker-Planck equations with bounded measurable coefficients in the presence of boundary conditions, including the inflow, diffuse reflection and specular reflection cases. 
\end{abstract}

\maketitle

\hypersetup{bookmarksdepth=2}
\setcounter{tocdepth}{1}
\tableofcontents

\section{Introduction} 
The main purpose of this work is to investigate the a priori regularity estimates for solutions to kinetic Fokker-Planck equations when supplemented with one of the following boundary conditions: inflow injection, diffuse reflection, and specular reflection. Of concern is the equation of the hypoelliptic form 
\begin{align}\label{FP}
(\p_t+v\cdot\nabla_x)f
=\nabla_v\cdot(A\nabla_v f)+B\cdot\nabla_vf +cf + s, 
\end{align}
for an unknown function $f=f(z)$ with $z:=(t,x,v)\in(0,T)\times\Omega\times\R^d$, where $T>0$, $\Omega$ is a bounded domain in $\R^d$, and the $d\times d$ real symmetric matrix $A=A(z)$, the $d$-dimensional vector $B=B(z)$ and the scalar functions $c=c(z),s=s(z)$ are given. We are always under the assumption that there is some constant $\Lambda>1$ such that in $(0,T)\times\Omega\times\R^d$, 
\begin{align}\label{H}
\left\{ 
\begin{aligned}
\ &\,\Lambda^{-1}|\xi|^2\le A\,\xi\cdot\xi\le\Lambda|\xi|^2
{\quad\rm for\ any\ }\xi\in\R^d, \\
\ &|B|+|c|\le\Lambda. \\
\end{aligned}
\right. 
\end{align}

\subsection{Main results}
Before stating the results, let us first make the notion of boundary conditions precise. 

\subsubsection{Phase boundaries} 
For $x\in\p\Omega$, the unit outward normal vector of $\p\Omega$ at $x$ is denoted by $n_x$. Let $\OO:=\Omega\times\R^d$ denote the phase domain. We split the phase boundary $\Gamma:=\p\Omega\times\R^d$ into the outgoing part $\Gamma_+$, incoming part $\Gamma_-$, and grazing (characteristic) part $\Gamma_0$, which are defined by 
\begin{align*}
\Gamma_\pm:=& \big\{(x,v)\in\p\Omega\times\R^d : \pm\, n_x\cdot v >0\big\},\\
\Gamma_0:=& \big\{(x,v)\in\p\Omega\times\R^d : n_x\cdot v =0\big\}.
\end{align*}
We denote by $\gamma f$ the trace of a (weak) solution $f:\overline{\OO}\rightarrow\R$ to \eqref{FP} at the boundary set $\Gamma$, and $\gamma_\pm f:=\gamma f\mathbbm{1}_{\Gamma_\pm}$. It will be shown in Section~\ref{EC} that our notion of solutions always admits suitable trace at boundaries. For $t\in(0,T]$, we abbreviate 
\begin{align*}
\Sigma_t^\pm&:=[0,t]\times\Gamma_\pm, \\
\Sigma_t&:=[0,t]\times\Gamma, \\ 
\OO_t&:=(0,t)\times\OO. 
\end{align*}

\subsubsection{Inflow boundary condition}
Given a function $g:\Sigma_T^-\rightarrow\R$ as the boundary data, we impose the condition, with the notation of the single-valued operator $\Gi$ for convenience, 
\begin{align*}
f(t,x,v)=\Gi f:=g(t,x,v)  {\quad\rm in\ }\Sigma_T^-. 
\end{align*}
In particular, the case $g=0$ corresponds to the so-called \emph{absorbing} boundary condition. 

\subsubsection{Nonlocal reflection boundary condition} 
The nonlocal boundary condition of a generalized diffuse reflection type to be concerned with is as follows,  
\begin{align*}
f(t,x,v)=\NN f:=\M(t,x,v)\int_{\R^d}f(t,x,v')\;\!(n_x\cdot v')_+\dif v'  {\quad\rm in\ }\Sigma_T^-,
\end{align*}
where the function $\M\in C^\beta(\Sigma_T)$ with $\beta\in(0,1)$ is given, and satisfies that for any $q\ge0$, there exists some constant $\Lambda_q>0$ such that
\begin{align}\label{M}
\|\lv^q\M\|_{L^\infty(\Sigma_T)} + [\M]_{C^\beta(\Sigma_T)}\le\Lambda_q.  
\end{align} 
Here the bracket $\la\cdot\ra:=(1+|\cdot|^2)^{1/2}$, and $C^\beta$ with $\beta\in(0,1)$ denotes the classical H\"older space with exponent $\beta$. When $\M$ has a form of the boundary Maxwellian, that is $\M=(2\pi)^{-\frac{d-1}{2}} \Theta^{-\frac{d+1}{2}} e^{-\frac{|v|^2}{2\Theta}}$ for some (uniformly positive and bounded) boundary temperature function  $\Theta:[0,T]\times\p\Omega\rightarrow\R_+$, the  operator $\NN$ is called \emph{diffuse reflection}. 

\subsubsection{Specular reflection boundary condition} 
The boundary condition with respect to the specular reflection operator $\RR$ reads  
\begin{align*}
f(t,x,v)=\RR f:=f(t,x,v-2(n_x\cdot v)\;\!n_x)  {\quad\rm in\ }\Sigma_T^-.  
\end{align*}

\subsubsection{Statement of the main theorem} 
Our results provide well-posedness and H\"older a priori bounds for solutions of \eqref{FP} supplemented with one of the above three boundary conditions. 

\begin{theorem}\label{thm}
Let the domain $\Omega$ be bounded with $\p\Omega\in C^{1,1}$, and let $\BB\in\{\Gi,\NN,\RR\}$ be the boundary operator. 
Assume that \eqref{H} and \eqref{M} hold. \vspace{-2.5mm}
\begin{item}
\item[\ \,$\bullet$ (Well-posedness)] 
For any $m\ge0$, we have some constant $l>0$ depending only on $d,m$ such that, for any given functions $\fin,s,g$ satisfying $\lv^l\fin\in L^\infty(\OO)$, $\lv^ls,\lv^lg\in L^\infty(\OO_T)$, there exists a unique bounded weak solution $f$ to \eqref{FP} such that $f|_{t=0}=\fin$ and $\gamma_-f=\BB f$, and such that for some constant $C>0$ depending only on $d,T,\Lambda,m,\Omega,\M$, we have 
\begin{align*}
\|\lv^mf\|_{L^\infty(\OO_T)}
\le C\big(\|\lv^ls\|_{L^\infty(\OO_T)} +\|\lv^l\fin\|_{L^\infty(\OO)} +\mathcal{B}\big), 
\end{align*}
where $\mathcal{B}=\|\lv^lg\|_{L^\infty(\Sigma_T^-)}$ when $\BB=\Gi$, and $\mathcal{B}=0$ when $\BB\in\{\NN,\RR\}$. 

\item[\ \,$\bullet$ (H\"older regularity)] 
If additionally $\fin\in C^\beta(\OO)$ and $g\in C^\beta(\Sigma_T^-)$ with $\beta\in(0,1]$, and the compatibility condition $\gamma_-\fin=\gamma_-\BB\fin$ holds, then there are some constants $\alpha\in(0,1)$ and $C'>0$ depending only on $d,T,\Lambda,m,\beta,\Omega,\M$ such that 
\begin{align*}
\|\lv^mf\|_{L^\infty(\OO_T)}+[f]_{C^\alpha(\OO_T)}\\
\le C'\big(\|\lv^ls\|_{L^\infty(\OO_T)} +\|\lv^l\fin\|_{L^\infty(\OO)}+[\fin]_{C^\beta(\OO)} +\mathcal{B}' \big)&, 
\end{align*}
where $\mathcal{B}'=\|\lv^lg\|_{L^\infty(\Sigma_T^-)}+[g]_{C^\beta(\Sigma_T^-)}$ when $\BB=\Gi$, and $\mathcal{B}'=0$ when $\BB\in\{\NN,\RR\}$.
\end{item} 
\end{theorem}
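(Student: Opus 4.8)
The plan is to build the solution theory in stages, moving from the interior/local theory to the global boundary-value problem, and then to bootstrap regularity. The backbone is the De Giorgi--Nash--Moser theory for kinetic equations with rough coefficients (à la Golse--Imbert--Mouhot--Vasseur and Wang--Zhang), which yields local $L^\infty$ bounds and interior Hölder continuity from an energy estimate; to this we must graft boundary estimates near each of the three boundary portions $\G_\pm,\G_0$.

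\emph{Step 1: weighted $L^\infty$ bounds and well-posedness.} First I would treat the weight: writing $h=\lv^m f$, the equation for $h$ has the same structure \eqref{FP} with modified (still bounded) lower-order coefficients and a source controlled by $\lv^m s$, so it suffices to propagate an unweighted $L^\infty$ bound with polynomially weighted data. Existence of a weak solution in the natural energy space (with $v\cdot\nabla_x f\in L^2_tH^{-1}_v$ and traces in $L^2(\Si_T;|n_x\cdot v|)$, via the Green-type identity for kinetic equations) follows from a Galerkin or vanishing-viscosity approximation; the boundary operators $\Gi,\NN,\RR$ are all bounded on the relevant trace spaces and the reflection operators are contractions for the $L^2$ energy, which gives an a priori $L^2$ bound and hence compactness. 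Uniqueness follows from the energy identity applied to the difference, using that $\NN$ and $\RR$ do not increase the incoming flux norm (for $\RR$ this is the measure-preserving change of variables $v\mapsto v-2(n_x\cdot v)n_x$ on $\G$; for $\NN$ it is Cauchy--Schwarz together with $\int \M (n_x\cdot v)_-\dif v$ being controlled). The weighted $L^\infty$ bound itself comes from a global maximum-principle/barrier argument: one constructs a supersolution of the form $C(\|\lv^l s\|_\infty+\|\lv^l\fin\|_\infty+\B)\lv^{-m}$ up to harmless corrections, using \eqref{H}, \eqref{M} and the fact that large velocities are damped; for the reflection cases the barrier must be chosen radial in $v$ near the boundary so that it is preserved by $\NN$ and $\RR$. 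The loss $l>m$ is exactly the number of velocity moments eaten by the transport term and the reflection kernel.

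\emph{Step 2: Hölder continuity in the interior and up to $\G_-$ and $\G_+$.} Away from the grazing set, the inflow boundary behaves like a Dirichlet condition and the outgoing boundary like a ``free'' boundary (no condition), and for both the known kinetic De Giorgi machinery — gain of integrability, then oscillation decay via a measure-theoretic lemma on kinetic cylinders, carried out in boundary-adapted cylinders that respect the transport scaling $(t,x,v)\mapsto(r^2t,r^3x,rv)$ and are tangent to $\p\O$ — gives local $C^\a$ estimates. The $C^{1,1}$ regularity of $\p\O$ is used to flatten the boundary and to control the geometry of these cylinders. For the inflow case one also uses the assumed Hölder continuity of $g$ and the compatibility condition to get continuity up to $\{t=0\}\cap\G_-$.

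\emph{Step 3: regularity near the grazing set and across reflection, the main obstacle.} This is where the difficulty lies. Near $\G_0$ and, for $\BB\in\{\NN,\RR\}$, across the reflection the solution is not obviously continuous: characteristics graze the boundary and, in the specular case, bounce, so the natural kinetic cylinders degenerate. For specular reflection I would unfold the domain by the billiard reflection, turning $\RR$-solutions into solutions (with reflected, still bounded, coefficients) of the free equation across $\p\O$, then apply the interior theory — this requires $\p\O\in C^{1,1}$ so that the unfolded coefficients retain enough regularity and the grazing trajectories have bounded curvature; the genuinely delicate point is uniform control near $\G_0$ where the reflection map is singular, handled by an iteration on dyadically shrinking neighborhoods of $\G_0$ together with the $L^\infty$ bound of Step 1 to absorb the grazing contribution. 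For diffuse reflection, $\NN f$ at a boundary point is an average of $f$ over outgoing velocities, hence inherits the $C^\b$-in-$(t,x)$ regularity of $\M$ and the interior regularity of $f$ just proved on $\G_+$; feeding this back as an (almost) inflow datum and iterating closes the estimate. In all three cases the final Hölder exponent $\a$ is the minimum of the De Giorgi exponent and $\b$, and the constants depend on $\M$ only through the bounds \eqref{M}. I expect Step 3 — uniform estimates in the grazing regime and the unfolding for specular reflection — to be the crux; Steps 1--2 are adaptations of existing kinetic regularity theory.
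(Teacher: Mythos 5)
Your overall architecture (energy theory $\to$ kinetic De~Giorgi $\to$ boundary flattening $\to$ mirror unfolding for $\RR$ $\to$ macroscopic regularity feeding back for $\NN$) matches the paper's, and you correctly identify the grazing set as the obstacle. The genuine gap is in your handling of the \emph{inflow} problem near $\G_+\cup\G_0$: you sort this into Step~2 as a ``free boundary'' / Dirichlet dichotomy, then defer the grazing set to Step~3, where you only treat the reflection cases. But kinetic cylinders tangent to $\p\O$ and anchored at a grazing point straddle both $\G_+$ (no data) and $\G_-$ (prescribed data), and the measure-theoretic oscillation lemma gives nothing on such a cylinder because the ``good set'' where the solution equals its boundary value can be of arbitrarily small measure. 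The paper's key device, which you are missing, is an \emph{extension by re-solving}: after flattening, one solves an auxiliary inflow problem on an enlarged domain $\W^\natural$ that includes $\{y_d>0,\, w_d\ge 0\}$, with data specified on the \emph{entire} effective boundary of the enlarged region. The original solution coincides with the extended one on the original domain (by uniqueness), while in $\W^\natural$ the extended $\big(\widetilde F - M_r\big)_+$, after a further zero extension to $\{y_d>0,\, w_d<0,\ \text{or}\ t<0\}$, becomes a subsolution that vanishes on at least a quarter of every rescaled cylinder near $\G_0\cup\G_-$ --- this is precisely what triggers the oscillation-reduction lemma (Lemma~\ref{measure}) and is the crux of Proposition~\ref{local}, Steps~2--4. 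Without such an extension you have no mechanism for the oscillation decay at grazing points in the inflow case, and since the $\NN$ case is reduced to inflow via the Hölder continuity of $\Upsilon[f]$ (Lemma~\ref{diffusedata}), this gap propagates there as well.

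Two secondary points. For specular reflection, your proposed ``iteration on dyadically shrinking neighborhoods of $\G_0$'' is unnecessary: after the mirror unfolding $\Rm(t,y,w)=(t,\check y,-y_d,\check w,-w_d)$ the extended function solves a \emph{single} equation with bounded measurable, uniformly elliptic coefficients across all of $\{y_d=0\}$ including the grazing set (the growth $|\widehat B|,|\widehat c|\lesssim\lvv^2$ is handled by rescaling to radius $r_0\approx\lvv^{-2}$, not by a $\G_0$-specific iteration), so interior De~Giorgi applies directly --- that is exactly why the specular case is the ``simple'' one in the paper. For well-posedness of the $\NN$ problem, your assertion that $\NN$ is essentially an $L^2$-trace contraction controlled by Cauchy--Schwarz and $\int\M(n_x\cdot v)_-\,\dif v$ does not close: controlling $\Upsilon[f]=\int f(n_x\cdot v')_+\dif v'$ from the $L^2(|n_x\cdot v'|)$-trace norm of $f$ would require integrating an unbounded velocity weight, and indeed the paper instead proves a pointwise $L^\infty$ bound and Hölder estimate on $\Upsilon[f]$ (Lemma~\ref{diffusedata}), again relying on the inflow regularity theory with the $w_d$-weight near grazing, and closes the iteration with a small time-step argument rather than a global $L^2$ contraction.
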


\begin{remark}
The theorem consists in Propositions~\ref{global}, \ref{globalr}, \ref{globals}. The estimates can be localized; see Propositions~\ref{local}, \ref{localr}, \ref{locals}. More precisely, under the same assumption as in the above theorem, we have the following local-in-time estimates written in a unified way with the abbreviation $\OO_t^\tau:=[\max\{0,\tau-t\},\tau]\times\overline{\OO}$ for $\tau,t\in(0,T]$. For any $m>0$, there exist some constants $l,C>0$ such that for any $\tau\in(0,T]$, we have 
\begin{align*}
\|\lv^mf\|_{L^\infty(\OO_1^\tau)}
\le C \big(\|\lv^{l} f\|_{L^2(\OO_2^\tau)}+ \|\lv^{l}s\|_{L^\infty(\OO_2^\tau)}+ \|\lv^{l}f\|_{L^\infty(\{t=0\}\cap\OO_2^\tau)} +\mathcal{B}_\tau \big), 
\end{align*}
where $\mathcal{B}_\tau=\|\lv^lg\|_{L^\infty(\Sigma_T^-\cap\OO_2^\tau)}$ when $\BB=\Gi$, and $\mathcal{B}_\tau=0$ when $\BB\in\{\NN,\RR\}$; and 
\begin{align*}
\|\lv^mf\|_{L^\infty(\OO_1^\tau)} + [f]_{C^\alpha(\OO_1^\tau)}\\
\le C\big(\|\lv^{l} f\|_{L^2(\OO_2^\tau)}+ \|\lv^ls\|_{L^\infty(\OO_2^\tau)} +[f]_{C^\beta(\{t=0\}\cap\OO_2^\tau)} +\mathcal{B}_\tau' \big)&, 
\end{align*} 
where $\mathcal{B}_\tau'=\|\lv^lg\|_{L^\infty(\Sigma_T^-\cap\OO_2^\tau)}+[g]_{C^\beta(\Sigma_T^-\cap\OO_2^\tau)}$ when $\BB=\Gi$, and $\mathcal{B}_\tau'=0$ when $\BB\in\{\NN,\RR\}$. 
\end{remark}

\subsection{Backgrounds and related work}
\subsubsection{Kinetic boundary value problems}
As a formulation of stochastic processes, the equation~\eqref{FP} appears naturally in the Langevin theory of Brownian motion; see the review \cite{Chand}. It describes the system constituted of a large number of interacting particles in the phase space, arising from the study of plasma physics and galactic dynamics for instance. The solution $f(t,x,v)$ to \eqref{FP} is interpreted as the density evolution of particles at time $t$ occupying the phase state of position $x$ and velocity $v$. 

When the interacting particles are confined in a bounded domain, the equation has to be supplemented with physically relevant boundary conditions that take into account how particles behave at the boundary; see \cite{Maxwell}, \cite{Cerci2}. As its name suggests, the inflow boundary condition means that the density of the particles flowing inward the domain is prescribed. We can see from this viewpoint that boundary conditions are free from prescriptions whenever the particles exit from the boundary. The reflection boundary conditions take the form of balance relations between the densities at the incoming and outgoing boundaries. The diffuse reflection as a nonlocal model describes that the striking particles are thermalized and then re-emitted inside the domain according to the boundary state. The interaction of particles with perfect solid boundaries is modeled by the specular reflection, meaning that particles are re-emitted elastically with postcollisional angles equal to the precollisional angles, as if light rays are reflected by a perfect mirror in optics. Despite the importance of the treatment of boundary conditions in the study of boundary value problems, limited results on boundary regularity for \eqref{FP} are known. 

The assumption~\eqref{H} without requirement of regularity is a bridge to the nonlinearity of various collisional kinetic models such as the Landau equation proposed in \cite{Landau}. Combining Theorem~\ref{thm} with the result of interior estimates in \cite{HS}, we know that any bounded positive solution to the Landau equation is smooth inside and H\"older continuous up to the boundary in a general bounded domain, where its boundary regularity cannot be improved in some sense as we will discuss in \S\ref{chara} below. In contrast, the Boltzmann equation with angular cutoff has different regularizing effects; its regularity property in convex domains was investigated in \cite{GKTT}, and discontinuities of the solutions were shown to be created at nonconvex parts of the boundary and propagate inside the domain along characteristics in \cite{Kim}. 

\subsubsection{Trace problem in kinetic equations}\label{trace-pro}
It is of importance to make sense for the trace of solutions when dealing with boundary value problems. In the kinetic setting, the transport part $\p_t+v\cdot\nabla_x$ of equations is hyperbolic and therefore lacks regularity. Even if exploiting the velocity diffusion in \eqref{FP}, fully characterizing the traces remains challenging, especially within the functional spaces where the solutions of \eqref{FP} exist, except in the one-dimensional case ($d=1$) as demonstrated in \cite[Theorem~1]{BG}. Variational frameworks for \eqref{FP} in a general bounded domain have not been well-developed; see \cite{AM2021} for some discussions and references. Nevertheless, the equation itself can be additionally used to circumvent this tricky issue. 

Some progress has been made since the earlier fundamental work on the trace problem, such as \cite{Bardos}, \cite{Cessenat2} for transport equations, and \cite{Ham} for the Boltzmann equation. Through regularization method based on the theory of renormalized solutions developed in \cite{DPLode}, general treatments for the traces with the aid of Green's renormalization formula for solutions to a large class of kinetic equations were studied in \cite{Mischler1}, \cite{Mischler3}. 

We will exploit the regularization method related to a renormalization technique, in combination with the classical energy method for parabolic equations, to develop the weak theory of initial-boundary value problems for \eqref{FP} under the $L^2$-framework in Section~\ref{EC}. The renormalization formula established in Subsection~\ref{RU} plays a role not only in deducing the uniqueness of solutions to \eqref{FP}, but also in shedding light on the trace for general bounded solutions. In Subsection~\ref{EW}, we construct solutions to the inflow boundary problems by solving a sequence of approximating parabolic equations, and then derive solutions to the specular reflection boundary problems through an iterative scheme of inflow problems. 

\subsubsection{Hypoellipticity}
Let us discuss here (and also in \S\ref{chara}) the equation~\eqref{FP} with smooth coefficients in $(0,T)\times\Omega\times\V$, for some open sets $\Omega,\V\subset\R^d$. The main part of \eqref{FP} subject to \eqref{H} can be written as H\"ormander's summation form (of type II)
\begin{align*}
\mathscr{L}:=X_0+\sum\nolimits_{i=1}^d X_i^*X_i=\p_t+v\cdot\nabla_x-\nabla_v\cdot(A\nabla_v\cdot), 
\end{align*}
where the vector fields $X_0:=\p_t+v\cdot\nabla_x$ and $(X_1,X_2,...,X_d)^T:=\sqrt{A}\nabla_v$ with the formal adjoint $X_i^*$ for $X_i$. As a basic observation, the commutator $[\nabla_v,X_0]=\nabla_x$. The notion of \emph{hypoellipticity} refers that the Lie algebra generated by the system $\{X_i\}_{i=0}^d$ span the full tangent space. An anisotropic diffusion in the operator $\mathscr{L}$ was first noticed by Kolmogorov in \cite{Kol} through the explicit calculation of its fundamental solution. It was then shown by H\"ormander in \cite{Ho} that the hypoelliptic structure of $\mathscr{L}$ ensures a (interior) regularization effect that the smoothness of $\mathscr{L}f$ implies the smoothness of $f$. 

\subsubsection{Characteristic points}\label{chara}
It is a classical difficulty when concerned with the regularity up to the boundary for solutions to degenerate elliptic equations, especially due to the presence of characteristic points. A boundary point associated to $\mathscr{L}$ is called \emph{characteristic} if every vector field $X_i$ with $0\le i\le d$ is tangent to the boundary at this point. The boundary points of the phase domain $\Omega\times\V$ is thus classified as $\Omega\times\p\V$, $\{(x,v)\in\p\Omega\times\V:\pm n_x\cdot v>0\}$, and the characteristic portion $\{(x,v)\in\p\Omega\times\V:n_x\cdot v=0\}$. When $\V=\R^d$, characteristic set coincides with $\Gamma_0$. 

As a historical remark, on the one hand, there have been several results on the issue of boundary regularity for degenerate elliptic equations since \cite{Keld}, \cite{Fichera1}, where the phenomena of loss of boundary conditions resulting from the degeneracy was noticed. The study of boundary regularity near certain non-characteristic points in a general setting can be found in \cite{KN1}, \cite{Oleinik2}. On the other hand, it was discovered that the loss of regularity occurs for some particular hypoelliptic problem (of type I)  at characteristic points. Indeed, an explicit solution to the Dirichlet problem associated with the Kohn Laplacian acting on the Heisenberg group was constructed in \cite{Jerison}, which was shown to be vanishing on the boundary and not better than H\"older continuous. However, none of the above results address full boundary issues for $\mathscr{L}$. 

Under the assumption of a simple structure with constant coefficients for the operator $\mathscr{L}$, the concern of continuity for solutions associated with the absorbing boundary condition was analyzed in \cite{HJV}, based on the barrier argument and the study of self-similar behaviors of solutions. Despite the lack of a complete description in \cite{HJV}, by following their argument, one is able to obtain a solution to \eqref{FP} equipped with constant coefficients and the absorbing boundary condition, that is at most in a H\"older class near the characteristic point. Its elaboration is presented in Appendix~\ref{counterexample}. 

\subsubsection{De Giorgi's technique}\label{dg}
One of the main parts of the proof for the regularity result in Section~\ref{inflow} relies primarily on the technique pioneered by De Giorgi in \cite{DG} for elliptic equations with bounded measurable coefficients. Its basic idea is to build up an oscillation decay of solutions at the unit scale; as a consequence of the scaling-translation invariance of equations, the oscillation control holds at each scale and hence yields interior H\"older estimates for solutions. Based on its hypoelliptic nature, the counterpart of regularization effect for $\mathscr{L}$ with rough coefficients was first obtained in \cite{Zhang}. An alternative approach with more comprehensive descriptions of properties for subsolutions to \eqref{FP} was proposed in \cite{GIMV}. 

For the elliptic case, the boundary estimate of solutions depends on certain geometric condition on domains. The interior regularity result extended to boundary for domains satisfying the exterior cone condition is well-known; see for instance \cite{GT}. Indeed, the combination of the cone condition and the H\"older continuous boundary data implies the same oscillation decay at each scale as in the interior case. In a similar manner, for \eqref{FP}, on the portions of boundary where the values are specified, that is $\Gamma_-$, one can deduce the boundary estimate from the interior regularization mechanism. 

\subsubsection{Extension method}
The regularity of inflow boundary problems for \eqref{FP} is fully treated in Section~\ref{inflow}. To overcome the difficulty of regularity due to the loss of boundary conditions on $\Gamma_+$ and the characteristic set $\Gamma_0$, we develop an extension method that reduces the singular boundary problems to a manageable scope. The proper extension across the boundary portion $\Gamma_+\cup\Gamma_0$ is guaranteed by the existence result of general inflow problems established in Section~\ref{EC}. This will lead to modified boundary value problems with fully specified boundary conditions. Provided that the inflow data is H\"older continuous, the treatment of oscillation controls on the boundary yields H\"older estimates for the extended solutions. 

The formulation of the nonlocal reflection boundary condition is essentially the same as the one of inflow problems. Although the incoming data is self-induced, we will show in Section~\ref{diffuse} that the macroscopic boundary quantity $\NN f$ with the solution $f$ to \eqref{FP} is actually H\"older continuous. The regularity estimate in such nonlocal reflection boundary problems is then obtained from the result of the inflow injection case in Section~\ref{inflow}. Moreover, the boundary a priori estimate is also used in Section~\ref{diffuse} to show the existence and uniqueness for this kind of reflection boundary problems. 

As for specular reflection boundary problems, the boundary regularity for solutions is proved in Section~\ref{specular} through a mirror extension method studied in \cite{Nier}, \cite{GHJO} with the aid of the trace result obtained in Section~\ref{EC}. There have been some development in certain special cases of \eqref{FP} in \cite{GHJO}, \cite{DGY}. Their key observation is that the solution can be extended through this extension trick outside of the domain continuously even near the characteristic set, which gives a direct reduction to interior issues. A similar mirror extension trick has been widely used in the Neumann problem for elliptic equations; see for instance \cite[Section~2.4.3]{Tro}, \cite[Proof of Theorem~2.5]{DK}, \cite[Section~9.1.8]{Nier}. 

\subsection{Notations} 
\subsubsection{Boundary conventions} 
We recall that the phase domain $\OO=\Omega\times\R^d$ and the phase boundary $\Gamma=\p\Omega\times\R^d=\Gamma_+\cup\Gamma_0\cup\Gamma_-$. Let the time-space domain $\Omega_T:=[0,T]\times\Omega$, and $n_{t,x}$ be the unit outward normal vector of $\p\Omega_T$, and $\dif\sigma_{t,x}$ be the surface measure on $\p\Omega_T$. We define the measure $\dif\mu$ on $\p\OO_T$ by 
\begin{align*}
\dif\mu:=|n_{t,x}\cdot (1,v)|\dif\sigma_{t,x}\dif v.
\end{align*}
We abbreviate the effective initial-boundary portion $\Ge$ of the domain $(0,T)\times\Omega\times\R^d$ by 
\begin{align*}
\Ge:&=\big(\{0\}\times\overline{\Omega}\times\R^d\big)
\cup\left([0,T]\times\Gamma_-\right)\\ &=\left(\{0\}\times\overline{\OO}\right)\cup\Sigma_T^-, 
\end{align*}
where the initial-boundary condition for \eqref{FP} can only be specified. 

\subsubsection{Invariant transformation}\label{invariant}
For $z_0=(t_0,x_0,v_0)\in\R^{1+2d}$ and $r>0$, we define the transformation $\T_{z_0,r}:\R^{1+2d}\rightarrow\R^{1+2d}$ by the prescription: 
\begin{align*}
\T_{z_0,r}:\, (\tilde{t},\tilde{x},\tilde{v}) \longmapsto
(t,x,v):=(t_0+r^2\tilde{t},\,x_0+r^3\tilde{x}+r^2\tilde{t}v_0,\,v_0+r\tilde{v}). 
\end{align*}
We abbreviate the cylinder centered at the origin of radius $r>0$ as $Q_r:= (-r^2, 0] \times B_{r^3}(0) \times B_r(0)$. The general cylinder centered at $z_0$ with radius $r$ is defined by 
\begin{align*}
Q_r(z_{0}):=&\left\{ \T_{z_0,r}(\tilde{z}): \, \tilde{z}\in Q_1\right\}\\
=&\left\{ (t,x,v): \, t_{0} - r^{2} < t \le t_{0}, \, |x - x_{0} - (t - t_{0})v_{0}| < r^{3},\, |v - v_{0}| < r\right\}.
\end{align*}
Loosely speaking, \eqref{FP} is invariant under the transformation, as the composition $f\circ\T_{z_0,r}$ of a solution $f$ to \eqref{FP} in $Q_r(z_{0})$ will solve an equation with the same structure in $Q_1$. 

As a technical remark, \eqref{FP} is not really translation invariant around the boundary, since the transformation $\T_{z_0,r}$ typically does not preserve the formulation of boundary conditions. Consequently, we avoid using the normal kinetic H\"older spaces, as it always necessitates more careful attention to the quantitative analysis involving velocity weights; see Section~\ref{inflow}. 

\subsubsection{Other notations}\label{notations}
Throughout the article, $B_R(\zeta)$ with $R>0$, $\zeta\in\R^N$ and $N\in\N_+$ denotes the Euclidean ball in $\R^N$ centered at $\zeta$ with radius $R>0$. 

Let $e_d\in\R^d$ be the $d$-th coordinate vector. 

We write the positive part $a_+:=\max\{a,0\}$ and negative part $a_-:=\max\{-a,0\}$ for any real-valued function $a$. 

We define the weighted $L^p$ space, denoted by $L^p(U,w)$, as the set of functions $u$ for which $\||u|^pw\|_{L^1(U)}<\infty$, where $U$ is a measurable set and $w$ is a weight function. 

A constant $C$ is called \emph{universal} if it depends only on $d,T,\Lambda,m,\beta,\Omega,p,q,m,l,\omega,\epsilon$ specified in context. The symbol $X\lesssim Y$ designates $X\le CY$ for some universal constant $C>0$, and the symbol $X\approx Y$ means that $X\lesssim Y$ and $Y\lesssim X$. 

\subsection{Organization of the paper}
The outline of the article is as follows. In Section~\ref{EC}, we study the well-posedness of weak solutions to the equation with the inflow and specular reflection boundary conditions. Section~\ref{inflow}, \ref{diffuse}, \ref{specular} are devoted to the study of the regularity issues in the inflow, nonlocal reflection and specular reflection boundary problems, respectively. The well-posedness result with the nonlocal reflection boundary condition is also derived in Section~\ref{diffuse}. We finally present in Appendix~\ref{counterexample} an example showing that the H\"older class is optimal near the boundary even for classical solutions. 

\medskip\noindent\textbf{Acknowledgement.}
The author is indebted to Fran\c{c}ois Golse, Cyril Imbert, and St\'{e}phane Mischler for helpful discussions. This work has received funding from the European Union’s Horizon 2020 research and innovation programme under the Marie Skłodowska-Curie grant agreement No 754362. 

\section{Theory of weak solutions}\label{EC}
This section is devoted to the theory of weak solutions to initial-boundary problems of the kinetic Fokker-Planck type equation in bounded domains. We find that neither the results nor the arguments known in the literature are complete. 

Let $T>0$ and $\D$ be a domain in $\R^d_x\times\R^d_v$. Assume that the boundary $\p\D$ is $C^{0,1}$ and consists only of the portions: the boundary $\p_x\D$ with respect to $x$ and the boundary $\p_v\D$ with respect to $v$. The boundary $\p_v\D$ can be empty, for instance $\D=\OO=\Omega\times\R^d$; and if $\D=\Omega\times\V$ for $\Omega,\V\subset\R^d$, then $\p_x\D=\p\Omega\times\V$ and $\p_v\D=\Omega\times\p\V$. We abbreviate $\D_t:=(0,t)\times\D$ for $t\in(0,T]$. The component of the unit outward normal vector at $x\in\p_x\D$ in $\R_x^d$ is denoted by $n_x$. We recall that the initial-boundary condition can only be specified on the effective boundary portion $\GG$ of $\D_T$, which is defined by 
\begin{align*}
\GG:=\left(\{0\}\times\overline{\D}\right)
\cup\big((0,T)\times\{(x,v)\in\p_x\D:n_x\cdot v<0\}\big)
\cup\big((0,T)\times\p_v\D\big). 
\end{align*} 
In this section, we consider a larger class of equations 
\begin{align}\label{existence}
(\p_t+v\cdot\nabla_x)f
=\nabla_v\cdot\left(A\nabla_v f\right) +B\cdot\nabla_vf +cf +\nabla_v\cdot G_1 +G_0 {\ \;\rm in\ }\D_T, 
\end{align}
where $G_1,G_0\in L^2(\D_T)$ are given, and the measurable coefficients $A,B,c$ satisfy the condition~\eqref{H} in $\D_T$. 

Let us now make the notion of weak solutions precise. 
\begin{definition}\label{def-weak}
A pair of functions 
\begin{align*}
(f,\gamma_xf)\in C^0([0,T];L^2(\D))\times L_{loc}^2([0,T]\times\p_x\D,|n_x\cdot v|)
\end{align*} 
is said to be a \emph{weak solution} to \eqref{existence} in $\D_T$, if $\nabla_vf\in L^2(\D_T)$, and for any $t\in(0,T]$ and $\varphi\in C^1_c(\overline{\D_t})$ with $\varphi=0$ on $[0,t]\times\p_v\D$, we have 
\begin{align}\label{weak}
\begin{aligned}
\int_{\{t\}\times\D} f\varphi  -\int_{\{0\}\times\D}f\varphi  
+ \int_{[0,t]\times\p_x\D}  (n_x\cdot v)\gamma_xf\varphi 
-\int_{\D_t} f(\p_t+v\cdot\nabla_x)\varphi \\
=\int_{\D_t} (-A\nabla_vf\cdot\nabla_v\varphi 
+ \varphi B\cdot\nabla_vf + c f\varphi -G_1\cdot\nabla_v\varphi +G_0\varphi)&. 
\end{aligned}
\end{align}
\end{definition}

\begin{remark}
For a given $f$ in the above definition, the function $\gamma_xf$ satisfying \eqref{weak} is unique, which is named as the trace of $f$ on $[0,T]\times\p_x\D$. Because of this uniqueness, we will sometimes refer to $f$ as the weak solution to \eqref{existence} below for simplicity, which actually indicates the pair of functions $(f,\gamma_xf)$. 

Since the notation $\gamma f$ denotes the trace of $f$ at the phase boundary $\Gamma=\p\Omega\times\R^d$, we use $\gamma_x f$ to emphasize that it is the trace of $f$ at the boundary $\p_x\D$ with respect to $x$ throughout this section. 
\end{remark}

\begin{remark}
As discussed in \S\ref{trace-pro}, due to the lack of development of bounded trace operators from $\big\{f\in L_{t,x}^2(([0,T]\times\Omega;H_v^1(\R^d)):(\p_t+v\cdot\nabla_x)f\in L_{t,x}^2([0,T]\times\Omega;H_v^{-1}(\R^d))\big\}$ to $\big\{f\in L_{loc}^2([0,T]\times\p\Omega\times\R^d,|n_x\cdot v|)\big\}$, we always have to make our notion of weak solutions stronger, which involves a pair of functions $(f,\gamma f)$. 
\end{remark}

\begin{remark}
In the following Sections \ref{inflow}, \ref{diffuse}, \ref{specular}, we will demonstrate that the solution $f$ to \eqref{FP} is (H\"older) continuous up to the boundary of the domain, contingent upon suitable assumptions, as shown in Theorem~\ref{thm}. Given the uniqueness of the trace, the trivial restriction of the continuous function $f$ on the boundary is naturally consistent with its trace $\gamma_x f$ defined above. 
\end{remark}

\subsection{Renormalization formula and uniqueness of weak solutions}\label{RU}
To establish the uniqueness of weak solutions to \eqref{existence} by energy estimates combined with a Gr\"onwall-type argument, we have to approximate the weak solution not only in $\D_t$ but also on its boundary $\p\D_t$. The argument of regularizing approximation stemmed from a renormalization technique is patterned after that of \cite{Mischler3}. In this subsection, we deal with the case that the phase domain $\D$ is a product space of domains $\Omega,\V\subset\R^d$ so that $\p_x\D=\p\Omega\times\V$ and $\p_v\D=\Omega\times\p\V$. 

\begin{lemma}[Renormalization formula]\label{trace}
Let $(f,\gamma_xf)$ be a weak solution to \eqref{existence} in $\D_T$ with $\D=\Omega\times\V$, for the domains $\Omega,\V\subset\R^d$ and the boundaries $\p\Omega\in C^{0,1},\p\V\in C^{0,1}$. 
Then, for any $\chi\in C^{1,1}(\R)$ such that $\chi'(f)=0$ on $[0,T]\times\Omega\times\p\V$ and $\chi(\iota)=O(\iota^2)$, $\chi''(\iota)=O(1)$ as $|\iota|\rightarrow\infty$, and any $\varphi\in C^1_c(\overline{\D_t})$ with $t\in(0,T]$, we have 
\begin{align}\label{ren}
\begin{aligned}
\int_{\{t\}\times\D} \chi(f)\varphi  -\int_{\{0\}\times\D}\chi(f)\varphi  
+ \int_{[0,t]\times\p\Omega\times\V}  (n_x\cdot v) \chi(\gamma_xf)\varphi&\\
=\int_{\D_t} \big[\chi(f)(\p_t+v\cdot\nabla_x)\varphi 
-A\nabla_v\chi(f) \cdot\nabla_v\varphi - \varphi\chi''(f)( A\nabla_vf+G_1)\cdot\nabla_vf&\\
+ \varphi B\cdot\nabla_v\chi(f) + c f\chi'(f)\varphi -\chi'(f) G_1\cdot\nabla_v\varphi + \chi'(f)G_0\varphi\big]&. 
\end{aligned}
\end{align}
Besides, provided that the weak solution $f$ is bounded in $\D_T$, the trace $\gamma_xf$ satisfies 
\begin{align*}
\|\gamma_xf\|_{L^\infty([0,T]\times\p_x\D)}\le\|f\|_{L^\infty(\D_T)}. 
\end{align*} 
\end{lemma}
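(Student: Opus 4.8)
The strategy is the standard one of proving a renormalization (chain-rule) identity for a low-regularity transport--diffusion equation: regularize in the degenerate variables $(t,x)$ along the transport field, pass the chain rule through the resulting smooth equation, and then take limits. Since the subtlety here is the boundary term on $[0,T]\times\p\O\times\V$, the regularization must be arranged so that it commutes as cleanly as possible with restriction to $\p_x\D$. First I would fix a boundary point of $\p\O$, flatten $\p\O$ locally by a $C^{1,1}$ diffeomorphism, and in the flattened picture translate inward along the transport direction: for a small parameter $\rho>0$ set $f_\rho(t,x,v):=f(t-\rho\,\theta(x)|n_x\cdot v|^{-1}\cdot(\dots),\,x+\dots,\,v)$ — more cleanly, mollify $f$ by convolution against a kernel $\zeta_\rho$ adapted to the invariant transformation $\T$, i.e. at scale $\rho^2$ in $t$, $\rho^3$ in $x$, and none in $v$, composed with a small inward shift $x\mapsto x-\rho\,\delta\,\nu(x)$ where $\nu$ is a smooth extension of the inward normal. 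The point of the $v$-independent-in-scale, $(t,x)$-only mollification is that $f_\rho$ then satisfies, in the interior, an equation of the form $(\p_t+v\cdot\nabla_x)f_\rho=\nabla_v\cdot(A\nabla_vf_\rho)+\dots+R_\rho$ with a commutator remainder $R_\rho\to0$ in $L^2_{\loc}$ (a DiPerna--Lions-type commutator estimate, using only $\nabla_vf\in L^2$, $A,B,c\in L^\infty$, $G_i\in L^2$), while the inward shift guarantees $f_\rho$ and $\nabla_vf_\rho$ extend to a neighborhood of $[0,T]\times\overline{\O}\times\V$ and have well-defined traces converging to $\g_xf$ in $L^2_{\loc}([0,T]\times\p\O\times\V,|n_x\cdot v|)$.

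Next, for such a smoothed $f_\rho$, the chain rule is legitimate: multiply the equation for $f_\rho$ by $\chi'(f_\rho)\vp$ and integrate over $\D_t$, using that $\chi\in C^{1,1}$ so $\chi'$ is Lipschitz and $\chi(\iota)=O(\iota^2)$ controls growth against the $C^0([0,T];L^2)$ bound. Integration by parts in $(t,x)$ produces the time-slice terms $\int_{\{t\}}\chi(f_\rho)\vp-\int_{\{0\}}\chi(f_\rho)\vp$ and the $x$-boundary term $\int_{[0,t]\times\p\O\times\V}(n_x\cdot v)\chi(\g_xf_\rho)\vp$, with \emph{no} contribution from $\p_v\D=\O\times\p\V$ because the hypothesis $\chi'(f)=0$ on $[0,T]\times\O\times\p\V$ forces $\nabla_v\chi(f_\rho)$ and the boundary integrand to vanish there (this is where that seemingly odd hypothesis on $\chi'$ is used — it kills the velocity-boundary flux that would otherwise appear after integrating $\nabla_v\cdot(A\nabla_v f_\rho)$ by parts; note $\vp$ need not vanish on $\O\times\p\V$ here, unlike in the definition of weak solution). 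Integration by parts in $v$ on the diffusion and drift terms yields $-A\nabla_v\chi(f_\rho)\cdot\nabla_v\vp-\vp\chi''(f_\rho)(A\nabla_vf_\rho+G_1)\cdot\nabla_vf_\rho+\vp B\cdot\nabla_v\chi(f_\rho)+c f_\rho\chi'(f_\rho)\vp-\chi'(f_\rho)G_1\cdot\nabla_v\vp$, plus $\chi'(f_\rho)G_0\vp$ and $\chi'(f_\rho)R_\rho\vp$ from the source and the commutator.

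Then I would let $\rho\to0$. By construction $f_\rho\to f$ and $\nabla_vf_\rho\to\nabla_vf$ in $L^2(\D_t)$, $\g_xf_\rho\to\g_xf$ in $L^2_{\loc}$ with weight $|n_x\cdot v|$, and $\chi'(f_\rho)R_\rho\vp\to0$ in $L^1$ since $R_\rho\to0$ in $L^2_{\loc}$ and $\chi'(f_\rho)\vp$ is bounded; the quadratic term $\chi''(f_\rho)(\dots)\cdot\nabla_vf_\rho$ passes to the limit because $\chi''\in L^\infty$ and $\nabla_vf_\rho\to\nabla_vf$ strongly in $L^2$, while the $G_1\cdot\nabla_vf_\rho$ piece uses strong $L^2$ convergence of $\nabla_vf_\rho$ against the fixed $L^2$ function $G_1$. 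Together with $\vp$ being compactly supported in $\overline{\D_t}$, all terms converge and we obtain \eqref{ren}. For the bounded-solution case with only $\p\O\in C^{0,1}$, the inward-shift construction still works (Lipschitz charts suffice to define the inward direction), the $C^{1,1}$ assumption was only needed to control second-order geometric terms in the smoother argument; boundedness of $f$ lets us replace the strong-$L^2$ machinery with dominated convergence and a truncation $\chi$, and the trace bound $\|\g_xf\|_{L^\infty}\le\|f\|_{L^\infty}$ follows by applying \eqref{ren} with $\chi$ a smooth truncation of $\iota\mapsto(\iota^2-M^2)_+$ for $M=\|f\|_{L^\infty(\D_T)}$: the interior and initial terms vanish, forcing $\int(n_x\cdot v)\chi(\g_xf)\vp=0$ for all admissible $\vp\ge0$, hence $\chi(\g_xf)=0$ a.e.\ on the outflow part, and symmetrically on the inflow part.

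The main obstacle is the boundary commutator control: showing that the $(t,x)$-mollification-plus-inward-shift of $f$ has traces on $[0,T]\times\p\O\times\V$ that actually converge to $\g_xf$ in the weighted $L^2$ space, rather than to some a priori different object. This is exactly the delicate point in \cite{Mischler3}, and it is where the precise scaling of the mollifier (matched to $\T_{z_0,r}$ so that the transport operator is only mildly perturbed) and the geometry of $\p\O$ (needing at least Lipschitz regularity, and $C^{1,1}$ for the general unbounded case to keep the commutator remainder in $L^2$) both enter; the interior commutator estimate and the chain rule itself are routine by comparison.
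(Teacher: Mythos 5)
Your overall scheme—mollify in the degenerate variables with an inward shift, run the chain rule through the smooth equation, pass to the limit—is the same as the paper's (the paper mollifies only in $x$ via $h_{\star k}(y)=\int_\O h(x)\rho_k(y-2k^{-1}n_y^\delta-x)\,dx$ and remarks that $t,v$ are routine, while you mollify in $(t,x)$ with $\T$-matched scales; this is a cosmetic difference). Your derivation of the trace bound by taking $\chi$ a smooth truncation of $\iota\mapsto(\iota^2-M^2)_+$ in \eqref{ren} is a clean alternative to the paper's weak-$*$ compactness argument, and it works.

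However, there is a genuine gap at precisely the point you yourself flag as ``the main obstacle'' but then do not resolve: you assert without proof that $\g_xf_\rho\to\g_xf$ in $L^2_{\loc}([0,T]\times\p\O\times\V,|n_x\cdot v|)$, and then merely note at the end that this is delicate. In the paper this step \emph{is} the content of the lemma, and it is handled by a specific device you do not reproduce: one tests the equation satisfied by the difference $f_{\star j}-f_{\star k}$ against $n_x^\delta\cdot v\,\eta_R(v)\,(f_{\star j}-f_{\star k})$, where $\eta_R$ is a velocity cut-off and $n_x^\delta$ is a smooth extension of the (regularized) normal. The point is that $n_x^\delta\cdot v$ is tangent to the transport flow on $\p\O$ up to a controlled error, so this test function converts the weak-$*$-only information about the trace into a Cauchy estimate in the weighted norm
\begin{equation*}
\int_{[0,T]\times\p\O\times\V}\bigl[(n_x\cdot v)^2+(n_x^\delta-n_x)\cdot v\,(n_x\cdot v)\bigr](f_{\star j}-f_{\star k})^2\eta_R \longrightarrow 0,
\end{equation*}
which after identifying the limit with $\g_xf$ (uniqueness of traces) gives exactly the strong convergence you need; for $\p\O\in C^{1,1}$ one takes $n_x^\delta=n_x$ outright, and for $\p\O\in C^{0,1}$ one additionally sends $\delta\to0$ using the $L^\infty$ bound on $f$ to control the mismatch term $(n_x^\delta-n_x)\cdot v$. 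Without an argument of this kind, the boundary term in your renormalized identity is the limit of $\int(n_x\cdot v)\chi(\g_xf_\rho)\vp$, which you have no control over. So the proposal as written does not close: the hard step is named, not proved.

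A secondary caution, which affects both your write-up and the paper's: after mollifying only in $(t,x)$, the function $\chi'(f_\rho)\vp$ need not vanish on $\O\times\p\V$, since $\chi'$ is nonlinear and mollification does not preserve the constraint $\chi'(f)=0$ there unless $\{\chi'=0\}$ is convex or one also mollifies/truncates in $v$. The paper sidesteps this by declaring the $t,v$-regularization ``standard''; if you want a self-contained argument you should regularize in $v$ as well (with no shift), or restrict to $\chi$ whose zero set of $\chi'$ is an interval, before invoking the chain rule. This is a fixable technicality, unlike the trace-convergence gap above.
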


\begin{proof}
We show the formula \eqref{ren} by an approximation argument only in the space variable for the sake of simplicity, as the approximation in other variables is standard. Let $\{\rho_k\}_{k\in\N_+}\subset C^\infty_c(\R^d)$ be a mollifier sequence such that 
\begin{align*}
\rho_1\ge 0,\quad 
\supp\rho_1\subset B_1,\quad 
\int_{\R^d}\rho_1(x)\dif x=1 \quad {\rm and}\quad 
\rho_k(x)=k^d\rho_1(kx).
\end{align*}
We take the domain $\Omega_\delta\subset\R^d$ with $C^{1,1}$-boundary, where the (small) parameter $\delta>0$ is intended to regularize and approximate $\Omega$ as $\delta\rightarrow0$. Define the unit vector $n_x^\delta:=|\nabla_x \dist(x,\p\Omega_\delta)|^{-1}\nabla_x \dist(x,\p\Omega_\delta)$. 

For a function $h\in L^2(\Omega)$, we define the convolution-translation regularization, 
\begin{align}\label{convolution}
h_{\star k}(y):=\int_{\Omega}h(x) \rho_k\big(y-2k^{-1} n_y^\delta-x\big)\dif x {\quad\rm for\ any\ }y\in\overline{\Omega}. 
\end{align}
Since the weak solution $f$ satisfies $f\in C^0([0,t];L^2(\D))$ and $\nabla_vf\in L^2(\D_t)$, for fixed small $\delta$, as $k\rightarrow\infty$, we have 
\begin{align}\label{gtf}
\begin{aligned}
f_{\star k}\rightarrow f
{\ \ \rm in\ }C^0([0,t];L^2(\D)),&\\
\nabla_vf_{\star k}\rightarrow \nabla_vf,\ \
(A\nabla_vf)_{\star k}\rightarrow A\nabla_vf,\ \ 
(B\cdot\nabla_vf)_{\star k}\rightarrow B\cdot\nabla_vf
{\ \ \rm in\ }L^2(\D_t),&\\
(cf)_{\star k}\rightarrow cf ,\ \ (G_1)_{\star k}\rightarrow G_1,\ \ (G_0)_{\star k}\rightarrow G_0
{\ \ \rm in\ }L^2(\D_t), &
\end{aligned}
\end{align}
where we remark that these convergences depend only on the exterior cone condition of $\Omega$ (see for instance \cite[Theorem 2.4]{BLD}), and hence they are independent of $\delta$. For any fixed $y\in\Omega$ and $\phi\in C_c^1(\overline{\D_t})$ such that $\phi=0$ on $[0,T]\times\Omega\times\p\V$, we pick the test function $\phi(t,y,v)\rho_k\big(y-2k^{-1} n_y^\delta-x\big)$ for \eqref{existence}. Observing that this function vanishes on $[0,t]\times\p\D$, we derive 
\begin{align}\label{rk0}
\begin{aligned}
\int_{\{t\}\times\D}f_{\star k}\phi-\int_{\{0\}\times\D}f_{\star k}\phi
-\int_{t,y,v}f_{\star k}\p_t\phi
-\int_{t,y,v}vf_{\star k}\cdot\nabla_y\phi + r_k &\\
= \int_{t,y,v}\big[ -(A\nabla_vf+G_1)_{\star k}\cdot\nabla_v\phi
+\phi(B\cdot\nabla_vf)_{\star k} + (cf)_{\star k}\phi + (G_0)_{\star k}\phi\big]&,  
\end{aligned}
\end{align}
where the remainder term $r_k$ is defined by 
\begin{align*}
\begin{aligned}
r_k:= \int_{t,y,v}vf_{\star k}\cdot\nabla_y\phi
-\int_{t,x,y,v} \phi(t,y,v) f(t,x,v)(v\cdot\nabla_x)\rho_k\big(y-2k^{-1}n_y^\delta-x\big).
\end{aligned}
\end{align*}
To acquire the equation satisfied by $f_{\star k}$, we use integration by parts to get 
\begin{align*}
\begin{aligned}
r_k=\int_{[0,t]\times\p\Omega\times\V} (n_y\cdot v) f_{\star k}(t,y,v) \phi(t,y,v)\\
-\int_{t,x,y,v} \phi(t,y,v)f(t,x,v) (v\cdot\nabla_y+v\cdot\nabla_x) \rho_k\big(y-2k^{-1}n_y^\delta-x\big)&.
\end{aligned}
\end{align*}
By the smoothness of $n_y^\delta$, and Young's inequality, we deduce that for fixed $\delta$, as $k\rightarrow\infty$, 
\begin{align*}
\int_{x}  f(t,x,v) (v\cdot\nabla_y + v\cdot\nabla_x) \rho_k\big(y-2k^{-1}n_y^\delta-x\big)&\\
=-k^{-1}\! \int_{x}f(t,x,v) \big(\nabla_y\otimes n_y^\delta\big) v\cdot (\nabla_y\rho_k)\big(y-2k^{-1}n_y^\delta-x\big)&\rightarrow0
{\quad\rm in\ }L_{loc}^2(\overline{\D_T}). 
\end{align*}
Combining this with \eqref{gtf}, we are able to pass to the limit in \eqref{rk0} except for the boundary term. Indeed, we conclude that, there are some functions $R_{1k},R_{0k}\in L^2(\D_T)$ such that $R_{1k},R_{0k}\rightarrow 0$ in $L^2(\D_T)$ as $k\rightarrow0$, and 
\begin{align}\label{FPk}
(\p_t + v\cdot\nabla_x)f_{\star k}
= \nabla_v\cdot(A\nabla_vf_{\star k}+R_{1k}+ G_1) + B\cdot\nabla_vf_{\star k} +cf_{\star k} +G_0 +R_{0k}.  
\end{align}
Consider the test function $\phi=\chi'(f_{\star k})\varphi$, where $\chi\in C^{1,1}(\R)$ such that $\chi'(f)=0$ on $[0,T]\times\Omega\times\p\V$ and $\chi(\iota)=O(\iota^2)$, $\chi''(\iota)=O(1)$ as $|\iota|\rightarrow\infty$, and $\varphi\in C^1_c(\overline{\D_t})$. As $k\rightarrow\infty$, we derive the renormalization formula
\begin{align}\label{renk}
\begin{aligned}
\int_{\{t\}\times\D} \chi(f)\varphi  -\int_{\{0\}\times\D}\chi(f)\varphi  
+ \lim_{k\rightarrow\infty}\int_{[0,t]\times\p\Omega\times\V}  (n_x\cdot v) \chi(f_{\star k})\varphi&\\
=\int_{\D_t} \big[\chi(f)(\p_t+v\cdot\nabla_x)\varphi-A\nabla_v\chi(f)\cdot\nabla_v\varphi 
-\varphi\chi''(f)(A\nabla_vf+G_1)\cdot\nabla_vf&\\
+ \varphi B\cdot\nabla_v\chi(f)  + cf\chi'(f)\varphi -\chi'(f)G_1\cdot\nabla_v\varphi + G_0\chi'(f)\varphi \big]&. 
\end{aligned}
\end{align}
It thus suffices to show the convergence from $\gamma_x f_{\star k}$ to $\gamma_x f$. To this end, we integrate the equation satisfied by $f_{\star j}-f_{\star k}$ (see \eqref{FPk}) against $n_x^\delta\cdot v\,\eta_R(v)(f_{\star j}-f_{\star k})$, where the cut-off function $\eta_R(v)\in C_c^\infty(B_{2R})$ valued in $[0,1]$ satisfies $\eta_R|_{B_R}\equiv1$ with the constant $R>0$. Then, for any fixed $\delta,R>0$, the passage to limit $j,k\rightarrow\infty$ yields that
\begin{align}\label{jk}
\begin{aligned}
\lim_{j,k\rightarrow\infty}\int_{[0,T]\times\p\Omega\times\V}
\big[ (n_x\cdot v)^2 + \big(n_x^\delta-n_x\big)\cdot v\,(n_x\cdot v) \big] 
(f_{\star j}-f_{\star k})^2\eta_R = 0.
\end{aligned}
\end{align}

Let us now additionally assume that $f\in L^\infty(\D_T)$. Then, for any $t\in(0,T]$, we have 
\begin{align*}
\|f_{\star k}\|_{L^\infty(\p\D_t)} \le \|f_{\star k}\|_{L^\infty(\D_T)} \le \|f\|_{L^\infty(\D_T)}. 
\end{align*}
After extracting a subsequence in $k\rightarrow\infty$, there is some function $f_\p\in L^\infty(\p\D_t)$ such that
\begin{align}\label{gxf}
\begin{aligned}
f_{\star k}\stackrel{\ast}{\rightharpoonup}f_\p {\quad\rm in\ } L^\infty(\p\D_t),&\\
\|f_\p\|_{L^\infty(\p\D_t)} 
\le \liminf\nolimits_{k\rightarrow\infty}\|f_{\star k}\|_{L^\infty(\p\D_t)} 
\le \|f\|_{L^\infty(\D_T)}.&
\end{aligned}
\end{align}
Hence, for any fixed $\delta>0$, $(n_x\cdot v)(f_{\star j}-f_{\star k})^2$ is weakly convergent in $L_{loc}^1([0,T]\times\p\Omega\times\V)$ as $j,k\rightarrow\infty$. We also notice that $\big(n_x^\delta-n_x\big) \cdot v$ is locally bounded and converges to $0$ almost everywhere in $[0,T]\times\p\Omega\times\V$ as $\delta\rightarrow 0$. We thus conclude the convergence of $(n_x\cdot v)f_{\star k}$ in $L_{loc}^2([0,T]\times\p\Omega\times\V)$ by sending $j,k\rightarrow\infty$ and $\delta\rightarrow0$ in \eqref{jk}. Together with \eqref{gxf}, choosing the constant $R$ such that $\{v:\varphi\neq0\}\subset B_R$, the formula \eqref{renk} is recast as \eqref{ren}. We point out that $f_\p$ coincides with $\gamma_xf$ on $[0,T]\times\p_x\D$ owing to the uniqueness of the trace. 

Finally, one can remove the assumption that $f\in L^\infty(\D_T)$ by applying the monotony argument presented in \cite[Proof of Theorem~4.5]{Mischler3}. 
\end{proof}

We are also able to get the uniqueness result to weak solutions when $\D=\OO=\Omega\times\R^d$, even if the boundedness of their traces in $L^2(\Sigma_T,|n_x\cdot v|)$ is not known. Here we recall that $\Sigma_t=[0,t]\times\Gamma=[0,t]\times\p\Omega\times\R^d$ and $\Sigma_t^\pm=[0,t]\times\Gamma_\pm$. 

\begin{corollary}[Uniqueness]\label{unique}
Let $(f_1,\gamma f_1)$ and $(f_2,\gamma f_2)$ be two weak solutions to \eqref{existence} in $\OO_T=(0,T)\times\Omega\times\R^d$ such that $f_1=f_2$ on $\{0\}\times\OO$ with $\p\Omega\in C^{0,1}$. Then, $f_1=f_2$ in $\OO_T$, provided that either their traces $\gamma f_1$ and $\gamma f_2$ coincide on $\Sigma_T^-$, or both of $\gamma f_1$ and $\gamma f_2$ satisfy the specular reflection boundary condition. 
\end{corollary}

\begin{proof}
By subtraction, it suffices to consider the weak solution $f:=f_1-f_2$ to \eqref{existence} in $\OO_T$ with $|G_1|=G_0=f|_{\{0\}\times\OO}=0$. Let $R>0$, and $\eta_R(v)\in C_c^\infty(B_{2R})$ be a radial function valued in $[0,1]$ such that $\eta_R|_{B_R}\equiv1$. In view of Lemma~\ref{trace}, we pick $\chi(\iota)=\iota^2$ and $\varphi=\eta_R$ in the formula~\eqref{ren}, where the boundary term reads 
\begin{align}\label{bterm}
\int_{\Sigma_t} (n_x\cdot v) (\gamma f)^2\eta_R. 
\end{align}
If $f|_{\Sigma_T^-}=0$, then \eqref{bterm} is nonnegative. If the specular reflection boundary condition holds for $f_1,f_2$, then so do $f^2\eta_R$; and thus \eqref{bterm} vanishes. Hence, as $R\rightarrow\infty$, we deduce that 
\begin{align*}
\int_{\{t\}\times\OO}f^2 \le 2\int_{\OO_t}\left(-A\nabla_vf\cdot\nabla_vf +fB\cdot\nabla_vf +cf^2\right). 
\end{align*}
By the Cauchy–Schwarz inequality and Gr\"onwall's inequality, we have $f=0$ in $\OO_T$, and hence the uniqueness follows.  
\end{proof}

\subsection{Existence of weak solutions}\label{EW}
\subsubsection{Inflow boundary value problems}\label{subinflow}
The proof of the existence result for inflow problems of \eqref{existence} is inspired from \cite{Boyer} which adopted the idea of vanishing viscosity associated with an appropriate boundary condition for transport equations. Let us stress that our notion of weak solution (see Definition~\ref{def-weak}) is characterized by a pair of functions $(f,\gamma_xf)$, and therefore, it is always necessary to demonstrate the existence of these two functions satisfying the weak formulation~\eqref{weak}. 

\begin{lemma}\label{weakexistence}
Suppose that the boundary $\p\D$ of the domain $\D\subset\R^d_x\times\R^d_v$ is $C^{0,1}$ and consists only of the boundary $\p_x\D$ with respect to $x$ and the boundary $\p_v\D$ with respect to $v$. Let the function $g:\overline{\D_T}\rightarrow\R$ be such that $g,\nabla_vg,(\p_t+v\cdot\nabla_x)g\in L^2(\D_T)$, $g|_{t=0}\in L^2(\D)$ and $g|_{[0,T]\times\p_x\D}\in L^2([0,T]\times\p_x\D,(n_x\cdot v)_-)$. Then, there exists a weak solution $(f,\gamma_x f)$ to \eqref{existence} in $\D_T$ associated with $f=g$ on $\GG$, meaning that $f|_{t=0}=g|_{t=0}$ and $\gamma_xf|_{\{[0,T]\times\p_x\D:\,n_x\cdot v<0\}}=g|_{\{[0,T]\times\p_x\D:\,n_x\cdot v<0\}}$. 
\end{lemma}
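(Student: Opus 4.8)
The plan is to construct $f$ by vanishing viscosity, following Boyer's treatment of transport equations: for $\ep>0$ we add $\ep\Delta_x$ to \eqref{existence} so that the principal part $-\ep\Delta_x-\nabla_v\cdot(A\nabla_v\cdot)$ becomes uniformly elliptic in $(x,v)$, and we choose the lateral boundary condition so as not to destroy the incoming datum in the limit. Precisely, let $f^\ep$ solve
\begin{equation*}
(\p_t+v\cdot\nabla_x)f^\ep-\ep\Delta_xf^\ep=\nabla_v\cdot(A\nabla_vf^\ep+G_1)+B\cdot\nabla_vf^\ep+cf^\ep+G_0 \quad\text{in }\D_T,
\end{equation*}
with $f^\ep|_{t=0}=g|_{t=0}$, the Dirichlet condition $f^\ep=g$ on $[0,T]\times\p_v\D$, and the Robin condition
\begin{equation*}
\ep\,\p_{n_x}f^\ep+(n_x\cdot v)_-f^\ep=(n_x\cdot v)_-g \quad\text{on }[0,T]\times\p_x\D.
\end{equation*}
(If $\D$ is unbounded in $v$, one first works on $\D\cap(\R^d_x\times B_R)$ with, say, a homogeneous Neumann condition on $\O\times\p B_R$ and lets $R\to\infty$ at the end, all the estimates below being uniform in $R$.) After subtracting $g$ to make the Dirichlet datum on $\p_v\D$ homogeneous, this linear problem falls within the Lions--Galerkin framework: the associated time-dependent bilinear form is bounded on $H^1(\D)$ (using boundedness of $v$, of $B$, $c$, and the trace inequality) and satisfies a Gårding inequality by \eqref{H}; the essential point for coercivity is that the Robin term contributes $\int_{\p_x\D}(n_x\cdot v)_-(\cdot)^2\ge0$ and combines with the $\tfrac12\int_{\p_x\D}(n_x\cdot v)(\cdot)^2$ coming from the transport term to give $\tfrac12\int_{\p_x\D}|n_x\cdot v|(\cdot)^2\ge0$. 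Hence there is a unique $f^\ep\in L^2(0,T;H^1(\D))\cap C^0([0,T];L^2(\D))$.

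The algebraic key is that, once the Robin condition is substituted into the weak formulation of the regularized problem, the lateral boundary term $\int_{[0,t]\times\p_x\D}[(n_x\cdot v)\g_xf^\ep-\ep\p_{n_x}f^\ep]\vp$ collapses to exactly $\int_{[0,t]\times\p_x\D}[(n_x\cdot v)_+\g_xf^\ep-(n_x\cdot v)_-g]\vp$, since $(n_x\cdot v)+(n_x\cdot v)_-=(n_x\cdot v)_+$; thus the incoming data is already encoded through $g$, and the outgoing trace enters only linearly. Testing the equation against $f^\ep-g$ and using the sign just described, together with \eqref{H}, Young's inequality for the $B$, $c$, $G_0$, $G_1$ terms and for the boundary term $\int(n_x\cdot v)_-g\,f^\ep$, and Grönwall, yields
\begin{equation*}
\sup_{[0,T]}\|f^\ep(t)\|_{L^2(\D)}^2+\|\nabla_vf^\ep\|_{L^2(\D_T)}^2+\ep\|\nabla_xf^\ep\|_{L^2(\D_T)}^2+\int_{[0,T]\times\p_x\D}|n_x\cdot v|\,(\g_xf^\ep)^2\le C,
\end{equation*}
with $C$ independent of $\ep$ and depending only on $\Lambda$, $T$, and the norms of $g$, $G_0$, $G_1$ in the stated spaces. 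Along a subsequence, $f^\ep\stackrel{\ast}{\rightharpoonup}f$ in $L^\infty(0,T;L^2(\D))$, $\nabla_vf^\ep\rightharpoonup\nabla_vf$ in $L^2(\D_T)$, $\ep\nabla_xf^\ep\to0$ in $L^2(\D_T)$ (as $\|\ep\nabla_xf^\ep\|_{L^2}^2\le\ep\,C$), and $\g_xf^\ep\rightharpoonup\ell$ in $L_{\loc}^2([0,T]\times\p_x\D,|n_x\cdot v|)$ for some $\ell$. Testing against $\vp\in C_c^1$ supported away from $\p\D$ shows $t\mapsto\int_\D f^\ep(t)\vp$ is bounded in $W^{1,1}(0,T)$ uniformly in $\ep$, so after a diagonal extraction over a countable dense family of such $\vp$ we get $\int_\D f^\ep(t)\vp\to\int_\D f(t)\vp$ for every $t$; hence $f$ is weakly continuous into $L^2(\D)$ and $f|_{t=0}=g|_{t=0}$, and the strong continuity $f\in C^0([0,T];L^2(\D))$ then follows by standard means (the energy identity for the limit, obtained as in Lemma~\ref{trace} with $\chi(\iota)=\iota^2$, upgrading weak to strong continuity in time).

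It remains to let $\ep\to0$ in the weak formulation of the regularized problem. Every remaining term is linear in $f^\ep$ or in $\nabla_vf^\ep$, or is the viscous term $\ep\int_{\D_t}\nabla_xf^\ep\cdot\nabla_x\vp\to0$, or is fixed data, and the boundary term passes to $\int_{[0,t]\times\p_x\D}[(n_x\cdot v)_+\ell-(n_x\cdot v)_-g]\vp$; thus $f$ satisfies \eqref{weak}. By uniqueness of the trace in \eqref{weak}, this forces $(n_x\cdot v)\g_xf=(n_x\cdot v)_+\ell-(n_x\cdot v)_-g$ as elements of $L_{\loc}^2([0,T]\times\p_x\D,|n_x\cdot v|)$; restricting to $\{n_x\cdot v<0\}$, where $(n_x\cdot v)=-(n_x\cdot v)_-$, gives $\g_xf=g$ there, which together with $f|_{t=0}=g|_{t=0}$ is exactly the asserted initial-boundary condition, completing the construction. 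The main obstacle throughout is the boundary: because the kinetic equation furnishes no compactness in $x$, the incoming trace can only be recovered via weak convergence, so the entire argument hinges on choosing the viscous boundary condition in such a way that this weak limit is forced to equal $g$ — which is precisely what the collapse of the lateral term under the Robin condition accomplishes.
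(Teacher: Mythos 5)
Your approach is essentially the same as the paper's: vanishing viscosity in $x$, the Robin boundary condition $\ep\,\p_{n_x}f^\ep+(n_x\cdot v)_-f^\ep=(n_x\cdot v)_-g$ on $[0,T]\times\p_x\D$ so that the lateral boundary term collapses to $(n_x\cdot v)_+\g_xf^\ep-(n_x\cdot v)_-g$, the same $\ep$-uniform energy estimate, and passage to the limit. (The paper normalizes to $g=0$ first, which makes its Robin condition homogeneous; your version without this reduction is equivalent.)

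Where you diverge is in how the limit is extracted, and this is where a genuine gap appears. You extract weak$^\ast$/weak limits of $f^\ep$, $\nabla_vf^\ep$, $\g_xf^\ep$ along a subsequence and then must upgrade $f$ from $L^\infty(0,T;L^2)$ with weak time-continuity to the strong continuity $f\in C^0([0,T];L^2(\D))$ required by the definition of a weak solution. You appeal to ``the energy identity for the limit, obtained as in Lemma~\ref{trace}''; but Lemma~\ref{trace} is stated precisely for weak solutions, whose definition already includes $C^0([0,T];L^2(\D))$, so this is circular as written. It is also not the parabolic Lions--Magenes situation: the transport term $v\cdot\nabla_xf$ with unbounded $v$ and no $x$-regularity means $\p_tf$ does not lie in a convenient dual space, so the standard weak-to-strong upgrade does not apply off the shelf. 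The paper sidesteps this entirely: by testing the difference $f_{\ep_k}-f_{\ep_{k+1}}$ for $\ep_k=k^{-2}$ against itself, it shows that $\{f_{\ep_k}\}$ is Cauchy in $C^0([0,T];L^2(\D))$ (with the factor $(\sqrt{\ep_k}-\sqrt{\ep_{k+1}})^2/\sqrt{\ep_k\ep_{k+1}}=1/(k(k+1))$ providing summable smallness), and simultaneously Cauchy in $L^2$ on the boundary with weight $|n_x\cdot v|$. This delivers strong convergence, hence $f\in C^0([0,T];L^2(\D))$, with no need for a posteriori continuity upgrades. You should replace your weak-convergence step with this Cauchy argument, or provide a self-contained proof of the energy identity valid under only the weaker regularity you actually have at that stage.

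Your identification of the incoming trace, on the other hand, is a clean alternative to the paper's. You pass to the limit in the weak formulation and read off that the boundary term must be $\int[(n_x\cdot v)_+\ell-(n_x\cdot v)_-g]\vp$, then invoke the uniqueness of the trace to define $\g_xf$ on $\{n_x\cdot v>0\}$ by $\ell$ and conclude $\g_xf=g$ on $\{n_x\cdot v<0\}$. The paper instead passes to the limit of the full normal flux in $H^{-1/2}(\p\D_T)$, using that the Robin condition makes the $x$-flux equal to $(n_x\cdot v)_+f_\ep$; both routes reach the same conclusion, and yours is arguably more elementary once the strong convergence issue above is repaired.
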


\begin{proof}
Without loss of generality, we assume $g=0$; otherwise, we consider the function $f-g$. Let us fix $\varepsilon\in(0,1)$ and consider the weak solution $f_\varepsilon$ to the initial-boundary problem for parabolic equation, 
\begin{align*}
\left\{ 
\begin{aligned}
\ &(\p_t+v\cdot\nabla_x)f_\varepsilon
=\varepsilon\Delta_xf_\varepsilon + \nabla_v\cdot(A\nabla_v f_\varepsilon) + B\cdot\nabla_vf_\varepsilon  +cf_\varepsilon + \nabla_v\cdot G_1 + G_0
{\quad \rm in \ } \D_T,  \\
\ &\;f_\varepsilon= 0 {\quad \rm on \ }  (\{0\}\times\D)\cup([0,T]\times\p_v\D), \\
\ &\; \varepsilon n_x\cdot\nabla_xf_\varepsilon + (n_x\cdot v)_-f_\varepsilon = 0 {\quad \rm on \ } [0,T]\times\p_x\D. \\
\end{aligned}
\right. 
\end{align*}
One may refer to \cite[III. \S5]{Lady} for the classical existence result of the above problem. In the weak formulation, for any $\varphi\in C^1(\overline{\D_t})$ with $t\in(0,T]$ such that  $\varphi=0$ on $[0,t]\times\p_v\D$, 
\begin{align}\label{FPeweak}
\begin{aligned}
\int_{\{t\}\times\D}f_\varepsilon\varphi
+ \int_{[0,t]\times\p_x\D} (n_x\cdot v)_+f_\varepsilon\varphi -\int_{\D_t}f_\varepsilon(\p_t+v\cdot\nabla_x)\varphi &\\
=\int_{\D_t} (-A\nabla_vf_\varepsilon\cdot\nabla_v\varphi-\varepsilon\nabla_xf_\varepsilon\cdot\nabla_x\varphi 
+ \varphi B\cdot\nabla_vf_\varepsilon + c f_\varepsilon\varphi - G_1\cdot\nabla_v\varphi + G_0\varphi)&. 
\end{aligned}
\end{align}
The energy estimate is derived by choosing the solution $f_\varepsilon$ itself for testing, and applying the Cauchy–Schwarz inequality and Gr\"onwall's inequality, which reads
\begin{align*}
\begin{aligned}
\sup_{t\in[0,T]} \int_{\{t\}\times\D} f_\varepsilon^2
+ \int_{[0,T]\times\p_x\D} |n_x\cdot v| f_\varepsilon^2 
+ \int_{\D_T} \left(|\nabla_vf_\varepsilon|^2  + \varepsilon|\nabla_xf_\varepsilon|^2\right)
\lesssim \int_{\D_T} \left(|G_1|^2 +G_0^2\right). 
\end{aligned}
\end{align*}
Similarly, for $\varepsilon_k:=k^{-4}$ and $k\in\N_+$, choosing test function $f_{\varepsilon_{k}}-f_{\varepsilon_{k+1}}$ in the weak formulations satisfied by $f_{\varepsilon_{k}}$ and $f_{\varepsilon_{k+1}}$ yields that 
\begin{align*}
\begin{aligned}
\sup_{t\in[0,T]} \int_{\{t\}\times\D} (f_{\varepsilon_{k}}-f_{\varepsilon_{k+1}})^2 
+ \int_{[0,T]\times\p_x\D} |n_x\cdot v| (f_{\varepsilon_{k}}-f_{\varepsilon_{k+1}})^2 
+ \int_{\D_T}|\nabla_v(f_{\varepsilon_{k}}-f_{\varepsilon_{k+1}})|^2  &\\
+ \int_{\D_T}|\nabla_x(\sqrt{\varepsilon_{k}}f_{\varepsilon_{k}}-\sqrt{\varepsilon_{k+1}}f_{\varepsilon_{k+1}})|^2
\lesssim \left(\sqrt{\varepsilon_{k}}-\sqrt{\varepsilon_{k+1}}\right)^2
\int_{\D_T}|\nabla_xf_{\varepsilon_{k}}||\nabla_xf_{\varepsilon_{k+1}}|&\\
\le \left(\sqrt{\varepsilon_{k}}-\sqrt{\varepsilon_{k+1}}\right)^2\sqrt{\varepsilon_{k}\varepsilon_{k+1}}^{-1}
\int_{\D_T}\left(\varepsilon_{k}|\nabla_xf_{\varepsilon_{k}}|^2+\varepsilon_{k+1}|\nabla_xf_{\varepsilon_{k+1}}|^2\right)&, 
\end{aligned}
\end{align*}
where $(\sqrt{\varepsilon_{k}}-\sqrt{\varepsilon_{k+1}})^2\sqrt{\varepsilon_{k}\varepsilon_{k+1}}^{-1}=k^{-2}(k^2+1)^{-1}$ tends to zero as $k\rightarrow\infty$. Hence, there is some function $f\in C^0([0,T];L^2(\D))$ satisfying $\nabla_vf\in L^2\left(\D_T\right)$ and $f|_{t=0}=f|_{[0,T]\times\p_v\D}=0$, and some function $\gamma_xf\in L^2((0,T);L^2(\p_x\D,|n_x\cdot v|\dif x\dif v))$,  such that as $\varepsilon=\varepsilon_k\rightarrow0$, 
\begin{align}\label{felimit}
\begin{aligned}
f_\varepsilon\rightarrow f {\quad\rm in\ }  C^0([0,T];L^2(\D)), &\\
\nabla_vf_\varepsilon\rightarrow \nabla_vf, \ \; \varepsilon\nabla_xf_\varepsilon\rightarrow 0 {\quad\rm in\ } L^2(\D_T), & \\
f_\varepsilon\rightarrow \gamma_xf{\quad\rm in\ } L^2((0,T);L^2(\p_x\D,|n_x\cdot v|)).  &
\end{aligned}
\end{align}
Sending $\varepsilon\rightarrow0$ in \eqref{FPeweak}, we deduce that the weak formulation~\eqref{weak} holds for the limiting function $f$. 
	
Now we have to show that $\gamma_xf|_{\{n_x\cdot v<0\}}=0$. To this end, we rewrite \eqref{FPeweak} and its limit in $L^2(\D_T)$ as
\begin{align*}
\div(f_\varepsilon,\, vf_\varepsilon-\varepsilon\nabla_xf_\varepsilon,\, -A\nabla_v f_\varepsilon -G_1)
= B\cdot\nabla_vf_\varepsilon +cf_\varepsilon+G_0&\\
\rightarrow B\cdot\nabla_vf +cf+G_0
=\div(f,\, vf,\, -A\nabla_v f -G_1) &. 
\end{align*}
It implies the convergence in $H^{-1/2}(\p\D_T)$ that 
\begin{align*}
n_z\cdot(f_\varepsilon,\,vf_\varepsilon - \varepsilon\nabla_xf_\varepsilon,\,-A\nabla_v f_\varepsilon -G_1 )
\rightarrow n_z\cdot(f,\,vf,\,-A\nabla_v f -G_1 ),  
\end{align*}
where $n_z\in\R^{1+2d}$ is the unit outward normal vector at $z\in\p\D_T$. Combining this with the boundary condition of $f_\varepsilon$ on $[0,T]\times\p_x\D$ and the limiting process of $\gamma_xf$ above, we arrive at $\gamma_xf|_{\{n_x\cdot v<0\}}=0$. This completes the proof. 
\end{proof} 

Furthermore, one is able to show that the weak solution constructed above satisfies the renormalization formula and thus the maximum principle. 

\begin{lemma}\label{max}
Let $(f,\gamma_xf)$ be the weak solution constructed in Lemma~\ref{weakexistence}. 
Then, for any convex $\chi\in C^{1,1}(\R)$ such that $\chi'(f)=0$ on $[0,T]\times\p_v\D$ and $\chi(\iota)=O(\iota^2)$ as $|\iota|\rightarrow\infty$, and any nonnegative $\varphi\in C^1_c(\overline{\D_t})$ with $t\in(0,T]$, we have 
\begin{align}\label{rene}
\begin{aligned}
\int_{\{t\}\times\D}\chi(f)\varphi-\int_{\{0\}\times\D}\chi(f)\varphi  
-\int_{[0,t]\times\p_x\D}(n_x\cdot v)\chi(\gamma_xf)\varphi&\\
\le \int_{\D_t} \big[\chi(f)(\p_t+v\cdot\nabla_x)\varphi 
-A\nabla_v\chi(f) \cdot\nabla_v\varphi - \varphi\chi''(f)( A\nabla_vf+G_1)\cdot\nabla_vf&\\
+ \varphi B\cdot\nabla_v\chi(f) + c f\chi'(f)\varphi -\chi'(f) G_1\cdot\nabla_v\varphi + \chi'(f)G_0\varphi\big]&. 
\end{aligned}
\end{align}
In particular, if additionally $g\in L^\infty(\GG)$, then 
\begin{align}\label{maxl2}
\|f\|_{C^0([0,T];L^2(\D))}\lesssim \|G_1\|_{L^2(\D_T)} +\|G_0\|_{L^2(\D_T)} +\|g\|_{L^\infty(\SMALL\GG)}; 
\end{align} 
moreover, if $G_1=0$ and $G_0\in L^\infty(\D_T)$, then we have the maximum principle that 
\begin{align}\label{maxgg}
\|\gamma_xf\|_{L^\infty([0,T]\times\p_x\D)}
\le \|f\|_{L^\infty(\D_T)} 
\lesssim \|G_0\|_{L^\infty(\D_T)} +\|g\|_{L^\infty(\SMALL\GG)}.  
\end{align}
\end{lemma}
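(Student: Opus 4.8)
The plan is to establish the renormalization inequality \eqref{rene} by passing to the limit in the corresponding identity for the regularized parabolic solutions $f_\ep$, and then extract the quantitative bounds \eqref{maxl2}--\eqref{maxgg} by making specific choices of the convex function $\chi$. First I would derive the renormalization identity for $f_\ep$ itself: since $f_\ep$ solves a uniformly parabolic equation with an added $\ep\Delta_x$ term, it enjoys enough Sobolev regularity that the test function $\phi=\chi'(f_\ep)\vp$ is admissible in \eqref{FPeweak} (after a standard mollification in all variables, exactly as in the proof of Lemma~\ref{trace}). This produces an \emph{equality} of the form \eqref{rene} but with an extra term $-\ep\int_{\D_t}\vp\chi''(f_\ep)|\nabla_xf_\ep|^2$ on the right and with $\ep\nabla_xf_\ep$ appearing alongside the $x$-flux; crucially, by convexity of $\chi$ this extra viscosity term has a sign: $\chi''\ge0$, so $-\ep\int\vp\chi''(f_\ep)|\nabla_xf_\ep|^2\le0$. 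Dropping it yields the inequality \eqref{rene} for $f_\ep$ (with $\ep$-dependent flux corrections that vanish: by the energy estimate $\ep\nabla_xf_\ep\to0$ in $L^2$, and on $[0,t]\times\p_x\D$ the Robin condition $\ep n_x\cdot\nabla_xf_\ep=-(n_x\cdot v)_-f_\ep$ turns the outgoing $x$-boundary term into precisely $-(n_x\cdot v)\chi(\g_xf_\ep)\vp$ integrated over the full $\p_x\D$, up to terms handled by $\chi'(\cdot)$ vanishing on $\p_v\D$).

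Next I would pass to the limit $\ep=\ep_k\to0$ using the convergences \eqref{felimit}: $f_\ep\to f$ in $C^0([0,T];L^2)$ and on $[0,T]\times\p_x\D$ in $L^2(|n_x\cdot v|)$, and $\nabla_vf_\ep\to\nabla_vf$ in $L^2$. Since $\chi\in C^{1,1}$ with $\chi(\iota)=O(\iota^2)$, the maps $\iota\mapsto\chi(\iota)$, $\chi'(\iota)$ are globally Lipschitz with at most linear/quadratic growth, so $\chi(f_\ep)\to\chi(f)$, $\chi'(f_\ep)\to\chi'(f)$ in the relevant $L^2$ spaces (in the interior, on $\{t\}\times\D$, and on the $x$-boundary), and $\nabla_v\chi(f_\ep)=\chi'(f_\ep)\nabla_vf_\ep\to\chi'(f)\nabla_vf=\nabla_v\chi(f)$ in $L^2$. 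For the one term carrying $\chi''$ on the right of \eqref{rene}, namely $-\int\vp\chi''(f)(A\nabla_vf+G_1)\cdot\nabla_vf$, I note it appears with a favorable sign once $\chi$ is convex and $A$ is positive definite, so it is enough to pass to the limit using weak lower semicontinuity / Fatou on that term rather than exact convergence — that is, $\liminf_k\big(-\int\vp\chi''(f_{\ep_k})(A\nabla_vf_{\ep_k}+G_1)\cdot\nabla_vf_{\ep_k}\big)\le-\int\vp\chi''(f)(A\nabla_vf+G_1)\cdot\nabla_vf$ after completing the square (this is where convexity of $\chi$ and ellipticity of $A$ both enter). All remaining terms are genuinely convergent, which gives \eqref{rene}.

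For \eqref{maxl2}: take $\chi(\iota)=\iota^2$ (convex, $O(\iota^2)$, and $\chi'(f)=2f$ need not vanish on $[0,T]\times\p_v\D$ — but recall $f=g=0$ there in the reduction, so this is fine, or more robustly apply \eqref{rene} with $\vp$ supported away from $\p_v\D$ and then enlarge $\vp$). With $\vp=\eta_R(v)$ a velocity cutoff and $g=0$ on $\GG$, the boundary term $-\int_{[0,t]\times\p_x\D}(n_x\cdot v)\chi(\g_xf)\eta_R$ has a sign on $\G_-$ (where $\g_xf=0$) and on $\G_+$ (where $-(n_x\cdot v)\le0$, so it is $\le0$ — wait: we need it on the favorable side, and indeed moving it to the left it is $\ge0$), hence it may be dropped; then Cauchy–Schwarz on $\vp B\cdot\nabla_v\chi(f)$, absorption of $\nabla_vf$ into the elliptic term $-A\nabla_v\chi(f)\cdot\nabla_v\vp$ plus the $\chi''$-term, letting $R\to\infty$, and Grönwall give $\sup_t\int_{\{t\}\times\D}f^2\lesssim\|G_1\|_{L^2}^2+\|G_0\|_{L^2}^2$; restoring $g$ replaces the right side by the stated bound. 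For \eqref{maxgg} with $G_1=0$: take a convex $\chi$ that is $\equiv0$ on $[-N,N]$ and grows quadratically outside, where $N:=\|G_0\|_{L^\infty}\cdot(\text{const})+\|g\|_{L^\infty(\GG)}$ is tuned so that the inhomogeneity $\chi'(f)G_0\vp$ cannot push $f$ above level $N$ (here the sign $c\le\Lambda$ is absorbed by choosing the running constant, or one uses $e^{-\Lambda t}$ weights). This is the standard barrier/truncation argument and yields $\|f\|_{L^\infty(\D_T)}\lesssim\|G_0\|_{L^\infty}+\|g\|_{L^\infty(\GG)}$; finally the trace bound $\|\g_xf\|_{L^\infty}\le\|f\|_{L^\infty}$ follows exactly as the corresponding assertion in Lemma~\ref{trace} (the convolution-translation regularization $f_{\star k}$ satisfies $\|f_{\star k}\|_{L^\infty(\p\D_t)}\le\|f\|_{L^\infty(\D_T)}$ and converges weak-$*$ to $\g_xf$).

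The main obstacle I expect is the limit passage in the quadratic gradient term carrying $\chi''$: one only has weak convergence of $\nabla_vf_\ep$ in $L^2$, so the term $\int\vp\chi''(f_\ep)A\nabla_vf_\ep\cdot\nabla_vf_\ep$ is not continuous under weak limits. The resolution is that it enters \eqref{rene} with the right sign, so only an inequality (lower semicontinuity of the convex quadratic functional $w\mapsto\int\vp\chi''(f)Aw\cdot w$, using $\vp\ge0$, $\chi''\ge0$, $A\ge\Lambda^{-1}\mathrm{Id}$, together with the strong convergence $\chi''(f_\ep)\to\chi''(f)$) is needed, and that is exactly what \eqref{rene} asserts — a one-sided bound. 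A secondary technical point is justifying the test function $\phi=\chi'(f_\ep)\vp$ for the parabolic problem, but this is routine given the parabolic regularity of $f_\ep$ and is parallel to the mollification already carried out in Lemma~\ref{trace}.
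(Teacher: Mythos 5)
Your strategy coincides with the paper's: test the regularized parabolic problem with $\chi'(f_\ep)\vp$, note that the viscous term $-\ep\int\vp\chi''(f_\ep)|\nabla_x f_\ep|^2$ has a favorable sign by convexity of $\chi$ and nonnegativity of $\vp$, pass to the limit using \eqref{felimit}, and then run a truncation/Gr\"onwall argument with suitable convex $\chi$ to get \eqref{maxl2}--\eqref{maxgg}, with the trace bound in \eqref{maxgg} coming from the $f_{\star k}$ weak-$*$ argument of Lemma~\ref{trace}. Two technical points are worth correcting. First, the Robin boundary term produced by the $\ep\Delta_x$ integration by parts is $-\int_{[0,t]\times\p_x\D}(n_x\cdot v)_- f_\ep\,\chi'(f_\ep)\vp$, and $f_\ep\chi'(f_\ep)$ is \emph{not} equal to $\chi(f_\ep)$, so the Robin term does not merge ``precisely'' into $-(n_x\cdot v)\chi(\g_x f_\ep)\vp$ as you assert. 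It must instead be argued either to vanish in the limit (it does, since after the reduction $g=0$ one has $f_\ep\to\g_x f=0$ in $L^2(|n_x\cdot v|)$ on $\{n_x\cdot v<0\}$) or to carry a favorable sign ($\iota\chi'(\iota)\ge 0$ when $\chi$ is convex with $\chi'(0)=0$). Second, the sign waffling in your treatment of the boundary term on $\G_+$ is traceable to what appears to be a typo in the statement \eqref{rene}: the boundary term should enter with a $+$ sign, as in \eqref{ren} and as in the displayed $f_\ep$-identity inside the paper's proof; with the $+$ sign, the term $+\int(n_x\cdot v)\chi(\g_x f)\vp$ is nonnegative whenever $\chi(g)=0$ (as with the paper's $\chi(\iota)=(\iota-M)_+^2$, $M=\|g\|_{L^\infty(\GG)}$), and dropping it from the left-hand side is then immediate. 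Finally, for \eqref{maxgg} with $G_0\ne 0$ your proposal to tune the truncation level $N$ so the source cannot push $f$ past $N$ needs a time-growing barrier to work, since the inhomogeneity accumulates; the paper handles this by simply shifting to the function $\pm f-e^{t}\|G_0\|_{L^\infty(\D_T)}$, which is cleaner.
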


\begin{proof}
Based on the same approximation mechanism of $f$ through $f_\varepsilon$ as in the proof of Lemma~\ref{weakexistence} above, the renormalization formula for the solution $f_\varepsilon$ of the parabolic equation (see its weak formulation \eqref{FPeweak}) is given by choosing the test function $\chi'(f_\varepsilon)\varphi$, with $\chi\in C^{1,1}(\R)$ such that $\chi'(f)=0$ on $[0,T]\times\p_v\D$ and $\chi(\iota)=O(\iota^2)$, $\chi''(\iota)=O(1)$ as $|\iota|\rightarrow\infty$, and $\varphi\in C_c^1(\overline{\D_t})$. More precisely, we have 
\begin{align*}
\begin{aligned}
\int_{\{t\}\times\D}\chi(f_\varepsilon)\varphi-\int_{\{0\}\times\D}\chi(f_\varepsilon)\varphi
+ \int_{[0,t]\times\p_x\D} (n_x\cdot v) \chi(f_\varepsilon)\varphi &\\
=\int_{\D_t} \big[ \chi(f_\varepsilon)(\p_t+v\cdot\nabla_x)\varphi -A\nabla_v\chi(f_\varepsilon)\cdot\nabla_v\varphi -\varepsilon\nabla_x\chi(f_\varepsilon)\cdot\nabla_x\varphi&\\
-\chi''(f_\varepsilon)\varphi (A\nabla_vf_\varepsilon+G_1)\cdot\nabla_vf_\varepsilon
-\varepsilon\chi''(f_\varepsilon)\varphi|\nabla_xf_\varepsilon|^2&\\
+ \varphi B\cdot\nabla_v\chi(f_\varepsilon) + c f_\varepsilon\chi'(f_\varepsilon) \varphi 
-\chi'(f_\varepsilon)G_1\cdot\nabla_v\varphi + \chi'(f_\varepsilon)G_0\varphi\big]&. 
\end{aligned}
\end{align*}
Provided that $\chi$ is convex and $\varphi$ is nonnegative, the passage to limit $\varepsilon\rightarrow0$ with the aid of \eqref{felimit} then implies \eqref{rene}. 

To show the estimates \eqref{maxl2} and \eqref{maxgg}, we may assume $c\le0$; otherwise, we consider the equation solved by $e^{\Lambda t}f$. Let the constant $M:=\|g\|_{L^\infty(\SMALL\GG)}$. Taking $\chi(\iota):=(\iota-M)_+^2$ and $\varphi=1$ in \eqref{rene}, and applying the Cauchy–Schwarz inequality, we get 
\begin{align*}
\begin{aligned}
\int_{\{t\}\times\D}(f-M)_+^2
\le 2\int_{\D_t} \left[-\chi''(f)(A\nabla_vf+G_1)\cdot\nabla_vf
+ \chi'(f)\,  B\cdot\nabla_vf\right]\\
+2\int_{\D_t} \left[c f\chi'(f) + \chi'(f)G_0\right]\lesssim \int_{\D_t}\left[(f-M)_+^2 +|G_1|^2 +G_0^2\right]&, 
\end{aligned}
\end{align*}
where we also used the fact that $cf\chi'(f)\le 2c\;\!(f-M)_+^2$ to produce the second inequality. By Gr\"onwall’s inequality, we obtain 
\begin{align*}
\|(f-M)_+\|_{C^0([0,T];L^2(\D))}^2 
\lesssim \|G_1\|_{L^2(\D_T)}^2 +\|G_0\|_{L^2(\D_T)}^2. 
\end{align*} 
In particular, when $|G_1|=G_0=0$, it turns out that $(f-M)_+=0$ in $\D_T$. These two consequences provide the upper bounds for $f$ in $L^2(\D_T)$ and in $L^\infty(\D_T)$, respectively. Choosing the function $(-f-M)_+$ for testing with a reduction to nonnegative $c$, we then arrive at \eqref{maxl2}, and also get \eqref{maxgg} in the case that $G_0=0$. 

As far as \eqref{maxgg} with general $G_0\in L^\infty(\D_T)$ is concerned, one may consider the equation solved by the function $\pm f-e^{t}\|G_0\|_{L^\infty(\D_T)}$. We finally remark that the estimate about the trace $\gamma_xf$ in \eqref{maxgg} can be achieved by the same approximation argument as in the proof of Lemma~\ref{trace}; see \eqref{gxf}. This concludes the proof. 
\end{proof}

\begin{corollary}\label{globalin}
Let $\p\Omega\in C^{0,1}$, and the function $g\in L^2(\Ge,\dif\mu)$. Then, there exists a unique weak solution $(f,\gamma f)$ to \eqref{existence} in $\OO_T=(0,T)\times\Omega\times\R^d$ associated with $\gamma f=g$ on $\Ge$, meaning that $f|_{t=0}=g|_{t=0}$ and $\gamma f|_{\Sigma_T^-}=g|_{\Sigma_T^-}$; and it satisfies 
\begin{align}\label{FPenergy}
\begin{aligned}
\|f\|_{C^0([0,T];L^2(\OO))} +\|\nabla_vf\|_{L^2(\OO_T)} +\|f\|_{L^2(\p\OO_T,\;\!\dif\mu)}&\\
\lesssim \|G_1\|_{L^2(\OO_T)} + \|G_0\|_{L^2(\OO_T)} +\|g\|_{L^2(\Ge,\;\!\dif\mu)}&. 
\end{aligned}
\end{align} 
Here we abbreviate the trace of $f$ on $\p\OO_T$ by $f$ itself. Besides, if additionally $G_1=0$, $G_0\in L^\infty(\D_T)$, and $g\in L^\infty(\Ge)$, then we have   
\begin{align}\label{maxglobal}
\|\gamma_xf\|_{L^\infty(\Sigma_T)}
\le\|f\|_{L^\infty(\OO_T)}
\lesssim \|G_0\|_{L^\infty(\OO_T)} +\|g\|_{L^\infty(\Ge)}. 
\end{align}
\end{corollary}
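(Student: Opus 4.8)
The plan is to produce the solution by the vanishing-viscosity scheme of Lemma~\ref{weakexistence}, to promote it to the quantitative bound \eqref{FPenergy} through the renormalization identity of Lemma~\ref{trace} applied with $\chi(\iota)=\iota^2$, and to obtain uniqueness directly from Corollary~\ref{unique}; the supplementary $L^\infty$ statement is then layered on using Lemma~\ref{max}.

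The crucial preliminary is an a priori energy estimate: any weak solution $f$ of \eqref{existence} on $\OO_T$ with $\g_-f=g$ on $\Si_T^-$, and with either $\p\O\in C^{1,1}$, or $f$ bounded and $\p\O\in C^{0,1}$, already satisfies \eqref{FPenergy}. Indeed, Lemma~\ref{trace} applies with $\V=\R^d$ (so the condition on $\p\V$ is vacuous), with $\chi(\iota)=\iota^2$, and with a radial cutoff $\vp=\eta_R(v)$ satisfying $\eta_R\equiv1$ on $B_R$ and $\supp\eta_R\subset B_{2R}$: then $(\p_t+v\cdot\nabla_x)\eta_R=0$, the boundary term equals $\int_{\Si_t^+}|n_x\cdot v|(\g_+f)^2\eta_R-\int_{\Si_t^-}|n_x\cdot v|g^2\eta_R$, and the $\chi''$-term produces a coercive piece $-2\eta_R\,A\nabla_vf\cdot\nabla_vf\le-2\Lambda^{-1}\eta_R|\nabla_vf|^2$ by \eqref{H}. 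Moving the time-slice and the nonnegative outgoing contribution to the left, treating the remaining $B$-, $c$-, $G_0$- and $G_1$-terms by Young's inequality, and letting $R\to\infty$ — the residual terms carrying $\nabla_v\eta_R$ are $O(R^{-1})$ on $\{R\le|v|\le 2R\}$ and disappear because $f,\nabla_vf\in L^2(\OO_T)$ — a Grönwall argument in $t$ then delivers \eqref{FPenergy}, in whose left-hand side the trace norm over $\p\OO_T$ splits into the $\{t=0\}$ and $\Si_T^-$ parts (reproducing $\|g\|_{L^2(\Ge,\dif\m)}$) and the $\{t=T\}$ and $\Si_T^+$ parts (controlled by $\sup_t\|f(t)\|_{L^2(\OO)}$ and the outgoing-trace term just estimated).

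For existence when $\p\O\in C^{1,1}$, approximate $g$ in $L^2(\Ge,\dif\m)$ by functions $g_k\in C^1(\overline{\OO_T})$ compactly supported in $v$, so that they meet the hypotheses of Lemma~\ref{weakexistence}, and let $f_k$ be the resulting weak solutions with $f_k=g_k$ on $\Ge$. Applying the estimate of the previous paragraph to $f_k-f_j$ — a weak solution of \eqref{existence} with $G_1=G_0=0$ and incoming data $g_k-g_j$ — shows that $\{f_k\}$ is Cauchy in $C^0([0,T];L^2(\OO))$, $\{\nabla_vf_k\}$ in $L^2(\OO_T)$, and $\{\g f_k\}$ in $L^2(\p\OO_T,\dif\m)$. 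The limit $f$ is a weak solution of \eqref{existence}, its incoming trace equals $g$ by uniqueness of the trace in the weak formulation, and \eqref{FPenergy} passes to the limit; uniqueness is precisely the inflow case of Corollary~\ref{unique}.

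Finally, when in addition $G_1=0$, $G_0\in L^\infty(\OO_T)$ and $g\in L^\infty(\Ge)$, arrange the approximation so that also $\|g_k\|_{L^\infty(\Ge)}\le\|g\|_{L^\infty(\Ge)}$ (truncate at the level $\|g\|_{L^\infty(\Ge)}$ before mollifying). Then \eqref{maxgg} of Lemma~\ref{max} bounds $\|f_k\|_{L^\infty(\OO_T)}$ and $\|\g_xf_k\|_{L^\infty(\Si_T)}$ uniformly by $\|G_0\|_{L^\infty(\OO_T)}+\|g\|_{L^\infty(\Ge)}$, and passing to an a.e.\ convergent subsequence gives $f\in L^\infty(\OO_T)$ satisfying \eqref{maxglobal}. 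To relax the regularity to $\p\O\in C^{0,1}$, note that each $f_k$ and each $f_k-f_j$ is now bounded (by \eqref{maxgg}, the latter with $G_0=0$), so the bounded-solution parts of Lemma~\ref{trace} and Corollary~\ref{unique} apply and the construction, the energy estimate and the uniqueness go through verbatim. The step I expect to demand the most care is this energy estimate, in particular the limit $R\to\infty$: one must be sure that the cutoff error terms genuinely vanish and that the outgoing trace is controlled on all of $\Si_T^+$ and not merely locally, which is why it is essential to run the computation on the approximants $f_k$ from Lemma~\ref{weakexistence}, whose traces lie in $L^2((0,T);L^2(\p_x\D,|n_x\cdot v|))$, rather than on a generic weak solution.
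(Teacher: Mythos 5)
Your proof is correct and follows essentially the same route as the paper: approximate $g$ in $L^2(\Ge,\dif\m)$ by data regular enough for Lemma~\ref{weakexistence}, derive the energy estimate from the renormalization identity with $\chi(\iota)=\iota^2$, obtain Cauchy convergence from the difference estimate, pass to the limit, and invoke Corollary~\ref{unique} for uniqueness and Lemma~\ref{max} for the $L^\infty$ bounds. The only stylistic difference is that you carry out the cutoff $\eta_R(v)$ and the limit $R\to\infty$ explicitly (together with the correct remark that the estimate should be run on the approximants, whose outgoing traces are globally square-integrable), whereas the paper abbreviates this by formally taking $\vp=1$; this is a justified elaboration, not a change of argument.
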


\begin{proof}
The existence of weak solutions follows from the same argument as in the proof of Lemma~\ref{weakexistence}, provided that $g$ is regular in the sense that $g\in L^2(\Omega_T;H^1(\R^d))$ and $(\p_t+v\cdot\nabla_x)g\in L^2(\OO_T)$. On account of this, we pick an approximating sequence of compactly supported smooth functions $g_j$ such that $g_j\rightarrow g$ in $L^2(\Ge,\dif\mu)$. Let $f_j$ be a weak solution to \eqref{existence} in $\OO_T$ associated with $f_j=g_j$ on $\Ge$. In view of Lemma~\ref{trace} or Lemma~\ref{max}, by taking $\chi(\iota)=\iota^2$ and $\varphi=1$ in the renormalization formula~\eqref{ren} or \eqref{rene}, and using the Cauchy–Schwarz inequality and Gr\"onwall's inequality, we have  
\begin{align*}
\begin{aligned}
\sup_{t\in[0,T]}\int_{\{t\}\times\OO}f_j^2 +\int_{\Sigma_T} |n_x\cdot v| f_j^2 +\int_{\OO_T}|\nabla_vf_j|^2
\lesssim \int_{\OO_T}\left(G_1^2+G_0^2\right) +\int_{\Ge}g_j^2\dif\mu, 
\end{aligned}
\end{align*} 
which is the estimate~\eqref{FPenergy} for $f_j$. As a consequence of the linear structure of the equation, we also have 
\begin{align*}
\begin{aligned}
\sup_{t\in[0,T]}\int_{\{t\}\times\OO}(f_{j+1}-f_{j})^2 +\int_{\Sigma_t} |n_x\cdot v| (f_{j+1}-f_{j})^2 +\int_{\OO_t}|\nabla_v(f_{j+1}-f_{j})|^2&\\
\lesssim \int_{\Ge}(g_{j+1}-g_{j})^2\dif\mu&. 
\end{aligned}
\end{align*} 
Sending $j\rightarrow\infty$, we acquire a limiting function $f$ of $f_j$ such that $f|_{\p\OO_t}\in L^2(\p\OO_t,\dif\mu)$ and $f|_{\Ge}=g$; furthermore, it is a weak solution to \eqref{existence} in $\OO_T$ and satisfies \eqref{FPenergy}. The uniqueness of weak solutions and the estimate~\eqref{maxglobal} are direct consequences of Corollary~\ref{unique} and Lemma~\ref{max}, respectively. 
\end{proof}

\subsubsection{Specular reflection boundary value problems}
Based on the existence result in \S\ref{subinflow} above, we construct through an iterative method patterned after \cite{Carrillo}, whose argument also works for a certain class of elastic reflection boundary problems but is of limited use for general diffuse reflection boundary problems. 

Let us recall the specular reflection operator $\RR f(t,x,v)=f(t,x,v-2(n_x\cdot v)\;\!n_x)$ for $(t,x,v)\in\Sigma_T$.

\begin{lemma}\label{a01}
Let $\p\Omega\in C^{0,1}$, $G_1,G_0\in L^2(\OO_T)$, and $\fin\in L^2(\OO)$ with $\OO_T=(0,T)\times\OO$ and $\OO=\Omega\times\R^d$. For any constant $a\in[0,1)$, there exists a unique weak solution $(f,\gamma f)$ to \eqref{existence} associated with the initial-boundary condition $f|_{t=0}=\fin$ in $\OO$ and $\gamma_-f=a\RR f$ in $\Sigma_T^-$; furthermore, it satisfies
\begin{align}\label{dra}
\begin{aligned}
\|f\|_{C^0([0,T];L^2(\OO))} +\|\nabla_vf\|_{L^2(\OO_T)}
+(1-a)\|f\|_{L^2(\p\OO_T,\;\!\dif\mu)}&\\
\lesssim \|G_1\|_{L^2(\OO_T)} + \|G_0\|_{L^2(\OO_T)} + \|\fin\|_{L^2(\OO)} &.
\end{aligned}
\end{align} 
\end{lemma}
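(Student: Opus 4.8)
The plan is to set up a fixed-point/iteration scheme that reduces the specular reflection problem (with a contraction factor $a<1$) to a sequence of inflow problems, each of which is solved by Corollary~\ref{globalin}. For a given $h\in L^2(\Si_T^-,\dif\m)$, let $\Phi(h)$ denote the trace $\g_+f$ on $\Si_T^+$ of the unique weak solution $f$ to \eqref{existence} with $f|_{t=0}=\fin$ and $\g_-f=h$; here I use that the specular reflection map $v\mapsto v-2(n_x\cdot v)n_x$ is a measure-preserving involution interchanging $\G_-$ and $\G_+$, so that $a\RR$ sends $L^2(\Si_T^+,\dif\m)$ into $L^2(\Si_T^-,\dif\m)$ with norm $\le a$. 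The desired solution corresponds to a fixed point of the map $h\mapsto a\RR\circ\Phi(h)$ on $L^2(\Si_T^-,\dif\m)$, or equivalently to the iteration $f^{(0)}\mapsto f^{(1)}\mapsto\cdots$ where $f^{(k+1)}$ solves \eqref{existence} with inflow data $\g_-f^{(k+1)}=a\RR f^{(k)}$, starting from $f^{(0)}$ solving the inflow problem with zero incoming data.

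First I would record the energy identity for the difference $w^{(k)}:=f^{(k+1)}-f^{(k)}$. Subtracting the equations and applying the renormalization formula \eqref{ren} with $\chi(\iota)=\iota^2$, $\vp=1$ (legitimate since $w^{(k)}$ solves the homogeneous version of \eqref{existence} with vanishing source and initial data), I get
\begin{equation*}
\int_{\{T\}\times\OO}(w^{(k)})^2+\int_{\Si_T}(n_x\cdot v)(\g w^{(k)})^2+2\int_{\OO_T}(A\nabla_vw^{(k)}\cdot\nabla_vw^{(k)}-w^{(k)}B\cdot\nabla_vw^{(k)}-c(w^{(k)})^2)=0.
\end{equation*}
The boundary term splits as $\int_{\Si_T^+}(n_x\cdot v)(\g_+w^{(k)})^2-\int_{\Si_T^-}|n_x\cdot v|(\g_-w^{(k)})^2$, and by construction $\g_-w^{(k)}=a\RR w^{(k-1)}$, so the second piece equals $a^2\int_{\Si_T^+}|n_x\cdot v|(\g_+w^{(k-1)})^2$ after the change of variables. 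After absorbing the $B,c$ terms with Cauchy–Schwarz and Grönwall (reducing to $c\le0$ as elsewhere in the section), this yields a bound of the form $\|\g_+w^{(k)}\|_{L^2(\Si_T^+,\dif\m)}^2\le a^2\|\g_+w^{(k-1)}\|_{L^2(\Si_T^+,\dif\m)}^2$ together with control of $\sup_t\|w^{(k)}\|_{L^2}$, $\|\nabla_vw^{(k)}\|_{L^2}$, and the full trace norm by $\|\g_+w^{(k-1)}\|_{L^2(\Si_T^+,\dif\m)}$. Since $a<1$, the outgoing traces contract geometrically, hence $\{f^{(k)}\}$ is Cauchy in $C^0([0,T];L^2(\OO))$ with $\nabla_vf^{(k)}$ Cauchy in $L^2(\OO_T)$ and the traces Cauchy in $L^2(\p\OO_T,\dif\m)$; the limit $f$ is a weak solution to \eqref{existence}, and passing to the limit in the relation $\g_-f^{(k+1)}=a\RR f^{(k)}$ (which is continuous in the $L^2(\dif\m)$ trace topology) gives $\g_-f=a\RR f$.

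For the quantitative bound \eqref{dra}, I would apply the same energy identity directly to $f$ (not to differences): with $\g_-f=a\RR f$, the boundary term becomes $(1-a^2)\int_{\Si_T^+}|n_x\cdot v|(\g_+f)^2$, which is nonnegative and, crucially, controls $(1-a)^2$ times the full trace norm since $1-a^2\ge(1-a)^2$ and $|n_x\cdot v|(\g_-f)^2=a^2|n_x\cdot v|(\g_+f)^2$ after the involution. Combining with the $\fin$, $G_1$, $G_0$ contributions through Cauchy–Schwarz and Grönwall gives \eqref{dra}. Uniqueness is immediate from Corollary~\ref{unique} once we observe that for $a<1$ the specular-type condition $\g_-f=a\RR f$ still forces the boundary quadratic form $\int_{\Si_t}(n_x\cdot v)(\g f)^2=(1-a^2)\int_{\Si_t^+}|n_x\cdot v|(\g_+f)^2\ge0$, so the Grönwall argument there applies verbatim to the difference of two solutions.

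The main obstacle I anticipate is the boundary bookkeeping: one must be careful that the change of variables $v\mapsto v-2(n_x\cdot v)n_x$ genuinely preserves $\dif\m=|n_x\cdot(1,v)|\,\dif\s_{t,x}\dif v$ (its Jacobian is $-1$ and it fixes $n_x\cdot v$ up to sign), that $\RR$ maps $L^2(\Si_T^+,\dif\m)$ isometrically onto $L^2(\Si_T^-,\dif\m)$, and that the traces produced by Corollary~\ref{globalin} are the objects on which $\RR$ acts — i.e., that at each iteration the incoming data $a\RR f^{(k)}$ indeed lies in $L^2(\Ge,\dif\m)$ as required by that corollary. The case $a=0$ is just Corollary~\ref{globalin} with zero incoming data, providing the base of the induction. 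Everything else (energy estimates, renormalization with $\chi(\iota)=\iota^2$, Grönwall, the reduction to $c\le0$ via $e^{\Lambda t}f$) is routine and parallels the proofs of Lemma~\ref{max} and Corollary~\ref{globalin}.
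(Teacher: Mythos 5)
Your proposal follows essentially the same route as the paper: iterate on inflow problems $\g_-f^{(k+1)}=a\RR f^{(k)}$ via Corollary~\ref{globalin}, use the renormalization formula with $\chi(\iota)=\iota^2$, $\vp=1$ to show the outgoing traces of the differences contract by the factor $a^2$ (the paper's identity~\eqref{D0} is precisely your observation that specular reflection is a measure-preserving involution for $\dif\m$), pass to the limit, and invoke Lemma~\ref{unique} for uniqueness. The only cosmetic difference is that you read off the $(1-a)$-weighted trace bound in~\eqref{dra} by applying the energy identity directly to the limiting solution $f$, whereas the paper extracts it along the iteration via the geometric sum~\eqref{Diterat}; both rest on the same identity and are equivalent.
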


\begin{proof}
We may assume that $c\le -C_0$ for a fixed constant $C_0>0$; otherwise, we consider the equation solved by $e^{(\Lambda+C_0)t}f$. In view of Corollary~\ref{globalin}, we acquire a sequence of weak solutions $\{f_n\}_{n\in\N}$ to \eqref{existence} through the iterative scheme of inflow boundary value problems associated with 
\begin{align*}
f_n|_{t=0}=\fin{\quad\rm and\quad}
\gamma_-f_{n+1}=a\RR f_{n}{\quad\rm for\ }n\in\N, \quad
 \gamma_-f_0=0. 
\end{align*}
By the definition of the reflection operator, we have 
\begin{align}\label{D0}
\int_{\Sigma_T} (n_x\cdot v)_-f_n^2
= \int_{\Sigma_T} (n_x\cdot v)_-(a\RR f_{n-1})^2
= a^2\int_{\Sigma_T} (n_x\cdot v)_+f_{n-1}^2. 
\end{align}
Taking $\chi(\iota)=\iota^2$ and $\varphi=1$ in the renormalization formula~\eqref{ren} of Lemma~\ref{trace}, and using the Cauchy-Schwarz inequality, we obtain  
\begin{align*}
\begin{aligned}
\int_{\{t\}\times\OO} f_n^2  - \int_{\OO}\fin^2
+ \int_{\Sigma_t} (n_x\cdot v) f_n^2 &\\
=2\int_{\OO_t} \left(-A\nabla_vf_n \cdot\nabla_vf_n
+ f_nB\cdot\nabla_vf_n + c f_n^2 - G_1\cdot\nabla_vf_n + G_0f_n \right)&\\
\le 2\int_{\OO_t} \left[-\frac{1}{2\Lambda}|\nabla_vf_n|^2 
+ \left(\Lambda^3+c+1\right)f_n^2 + |G_1|^2 + G_0^2\right]&. 
\end{aligned}
\end{align*}
It follows by picking $C_0:=1+\Lambda^3$ that 
\begin{align}\label{D1}
\sup_{t\in[0,T]}\int_{\{t\}\times\OO} f_n^2 + \int_{\OO_T} |\nabla_vf_n|^2
\lesssim \int_{\OO_T}\left(|G_1|^2 + G_0^2\right) + \int_{\OO}\fin^2 + \int_{\Sigma_T} (n_x\cdot v)_-f_n^2,
\end{align}
and 
\begin{align}\label{D2}
\int_{\Sigma_T} (n_x\cdot v)_+f_n^2
\le \int_{\OO_T}\left(|G_1|^2 + G_0^2\right) + \int_{\OO}\fin^2 + \int_{\Sigma_T} (n_x\cdot v)_-f_n^2. 
\end{align}
Applying \eqref{D0} and \eqref{D2} iteratively yields that 
\begin{align}\label{Diterat}
\begin{aligned}
 \int_{\Sigma_T} (n_x\cdot v)_+f_n^2 
\le \sum_{i=0}^{n} a^{2i} \int_{\OO_T}\left(|G_1|^2 + G_0^2\right) + \sum_{i=0}^{n} a^{2i} \int_{\OO}\fin^2  &\\
\le \frac{1}{1-a^2} \int_{\OO_T}\left(|G_1|^2 + G_0^2\right) + \frac{1}{1-a^2} \int_{\OO}\fin^2  &. 
\end{aligned}
\end{align}
Similarly, the function $f_{n}-f_{n-1}$ solves \eqref{existence} associated with $|G_1|=G_0=0$,  
\begin{align*}
(f_{n}-f_{n-1})|_{t=0}=0 {\quad\rm and\quad}
\gamma_-(f_{n+1}-f_n)=a\RR(f_{n}-f_{n-1}){\quad\rm for\ }n\in\N_+, 
\end{align*}
we thus obtain  
\begin{align*}
\sup_{t\in[0,T]}\int_{\{t\}\times\OO} (f_{n}-f_{n-1})^2 + \int_{\OO_T} |\nabla_v(f_{n}-f_{n-1})|^2
+\int_{\Sigma_T} |n_x\cdot v|(f_{n}-f_{n-1})^2&\\
\lesssim \int_{\Sigma_T} (n_x\cdot v)_-(f_{n}-f_{n-1})^2
\le a^{2n}\int_{\Sigma_T} (n_x\cdot v)_-(f_{1}-f_{0})^2&. 
\end{align*}
Therefore, sending $n\rightarrow\infty$, we derive a limiting function $f$ of $f_n$, which solves \eqref{existence} associated with $f|_{t=0}=\fin$ and $\gamma_-f=a\RR f$. We point out that \eqref{dra} is a consequence of the estimates \eqref{D1} and \eqref{Diterat} under the limit process $n\rightarrow\infty$. The uniqueness of weak solutions follows from the same argument of the proof in Corollary~\ref{unique}.  
\end{proof}

One disadvantage in the above argument is the lack of information on the trace of solutions as $a\rightarrow1$. With the aid of the trace result from Lemma~\ref{trace}, we achieve the well-posedness for \eqref{existence} under the reflection boundary condition $\gamma_-f=a\RR f$ for the full range $a\in[0,1]$. 

\begin{corollary}\label{globalre}
Let $\p\Omega\in C^{0,1}$, $G_1=0$, $G_0\in L^2\cap L^\infty(\OO_T)$, and $\fin\in L^2\cap L^\infty(\OO)$ with $\OO_T=(0,T)\times\OO$ and $\OO=\Omega\times\R^d$. For any constant $a\in[0,1]$, there exists a unique weak solution $(f,\gamma f)$ to \eqref{existence} in $\OO_T$ associated with $f|_{t=0}=\fin$ in $\OO$ and $\gamma_-f=a\RR f$ in $\Sigma_T^-$; furthermore, it satisfies
\begin{align}\label{drglobal}
\begin{aligned}
\|f\|_{C^0([0,T];L^2(\OO_T))} + \|\nabla_vf\|_{L^2(\OO_T)} 
\lesssim \|G_0\|_{L^2(\OO_T)} + \|\fin\|_{L^2(\OO)} , \\
\|\gamma f\|_{L^\infty(\Sigma_T)}
\le\|f\|_{L^\infty(\OO_T)}
\lesssim \|G_0\|_{L^\infty(\OO_T)} + \|\fin\|_{L^\infty(\OO)} .
\end{aligned}
\end{align} 
\end{corollary}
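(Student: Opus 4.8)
The plan is to build on Lemma~\ref{a01}, which already provides the solution $f^{(a)}$ for every $a\in[0,1)$, and to reach the endpoint $a=1$ by a compactness argument as $a\to1$; along the way one also relaxes $\p\O\in C^{1,1}$ to $\p\O\in C^{0,1}$, which is free here since $G_1=0$ and $\fin,G_0\in L^\infty$. Indeed, in the scheme $\g_-f_0=0$, $\g_-f_{n+1}=a\RR f_n$ used to build $f^{(a)}$, each $f_n$ solves an inflow problem with a \emph{bounded} incoming datum, because $\|\RR f_n\|_{L^\infty(\Si_T^-)}\le\|\g f_n\|_{L^\infty(\Si_T)}\le\|f_n\|_{L^\infty(\OO_T)}$; applying the bounded part of Corollary~\ref{globalin} inductively keeps every $f_n$ bounded, so the construction of $f^{(a)}$ and the energy bound~\eqref{dra} carry over to $\p\O\in C^{0,1}$.

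The first substantive step is an $L^\infty$ bound on $f^{(a)}$ that is \emph{uniform in $a\in[0,1)$}. After the usual reduction to $c\le0$ and the shift $f\mapsto f-e^t\|G_0\|_{L^\infty}$ (which replaces the boundary equality by $\g_-f\le a\RR f$ and costs only a bounded factor, as $\RR$ acts in $v$ only), I would apply the renormalization formula~\eqref{ren} to each bounded iterate $f_n$ with $\vp=1$ and $\chi$ a one-sided $C^{1,1}$ convex approximation of $(\cdot-M)_+^2$, $M:=\|\fin\|_{L^\infty}+\|G_0\|_{L^\infty}$. The boundary contribution is $\int_{\Si_t^+}|n_x\cdot v|\chi(\g_+f_n)-\int_{\Si_t^-}|n_x\cdot v|\chi(\g_-f_n)$, and using $\g_-f_n\le a\RR f_{n-1}$, the elementary inequality $(a\iota-M)_+\le a(\iota-M)_+$ (valid for $a\in[0,1]$, $M\ge0$), and the fact that $v\mapsto v-2(n_x\cdot v)n_x$ is a $\dif\m$-preserving bijection from $\G_-$ onto $\G_+$, the incoming piece is at most $a^2$ times the outgoing piece for $f_{n-1}$. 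Convexity of $\chi$, $c\le0$, and Cauchy--Schwarz on the $B$-term (which produces only a Grönwall factor) make the interior terms harmless, so with $J_n:=\int_{\Si_T^+}|n_x\cdot v|\chi(\g_+f_n)$ one gets $J_n\lesssim a^2J_{n-1}$ and $\sup_t\int_{\{t\}}\chi(f_n)\lesssim a^2J_{n-1}$. Since $\g_-f_0=0$ gives $J_0=0$ and the recursion carries no additive source once $\chi(\fin)=0$, induction forces $\chi(f_n)\equiv0$ for all $n$, i.e.\ $\|f^{(a)}\|_{L^\infty(\OO_T)}\lesssim\|G_0\|_{L^\infty}+\|\fin\|_{L^\infty}$ uniformly in $a$; moreover $\|\g f^{(a)}\|_{L^\infty(\Si_T)}\le\|f^{(a)}\|_{L^\infty(\OO_T)}$ by the trace bound in Lemma~\ref{trace}.

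The second step is the passage $a\to1$. For $a_k\uparrow1$ set $f_k:=f^{(a_k)}$; by~\eqref{dra} and the previous step, $\{f_k\}$ is bounded in $C^0([0,T];L^2(\OO))\cap L^\infty(\OO_T)$, $\{\nabla_vf_k\}$ in $L^2(\OO_T)$, and $\{\g f_k\}$ in $L^\infty(\Si_T)$. Along a subsequence $f_k\rightharpoonup f$ with $\nabla_vf_k\rightharpoonup\nabla_vf$ in $L^2(\OO_T)$ and $\g f_k\rightharpoonup\gamma$ weakly-$*$ in $L^\infty(\Si_T)$; a standard weak-in-time compactness argument gives $f\in C^0([0,T];L^2(\OO))$, $f_k(t)\rightharpoonup f(t)$ for every $t$, and $f|_{t=0}=\fin$. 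Passing to the limit in~\eqref{weak} — every term is linear in $(f_k,\g f_k,\nabla_vf_k)$, and the trace term converges because test functions are compactly supported in $v$, where $|n_x\cdot v|\dif\s_{t,x}\dif v$ has finite mass — shows $f$ is a weak solution to~\eqref{existence} with trace $\gamma$, hence $\gamma=\g f$ by uniqueness of the trace. To identify the boundary condition, I test $\g_-f_k=a_k\RR f_k$ against $\psi\in L^1(\Si_T^-)$ and use the reflection change of variables: the right-hand side becomes $a_k\int_{\Si_T^+}\psi(t,x,v-2(n_x\cdot v)n_x)\,\g_+f_k$ with weight in $L^1(\Si_T^+)$, so it tends to $\int_{\Si_T^-}\psi\RR f$, while the left-hand side tends to $\int_{\Si_T^-}\psi\g_-f$ and $a_k\to1$; thus $\g_-f=\RR f$. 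So $f$ is a bounded weak solution for $a=1$. Uniqueness for every $a\in[0,1]$ is Corollary~\ref{unique} (specular case, for bounded solutions when $\p\O\in C^{0,1}$), and~\eqref{drglobal} follows from~\eqref{dra} and the second paragraph by weak and weak-$*$ lower semicontinuity.

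The main difficulty is precisely this last passage. As $a\to1$ the traces $\g f^{(a)}$ are \emph{not} bounded uniformly in $L^2(\p\OO_T,\dif\m)$ — only $(1-a)\|\g f^{(a)}\|_{L^2(\p\OO_T,\dif\m)}$ is, by~\eqref{dra} — so there is no $L^2$-trace compactness, and since the reflection condition offers no contraction at $a=1$ a Cauchy-in-$a$ estimate also fails. What makes the limit work is the uniform $L^\infty$ trace bound from the second paragraph, exploited against the velocity-compactly-supported test functions of the weak formulation (on which $\dif\m$ is finite) together with uniqueness of the trace to pin down the weak-$*$ limit.
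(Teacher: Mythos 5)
Your proof takes a genuinely different route from the paper's for the passage $a\to1$, and it is worth comparing. The uniform-in-$a$ $L^\infty$ bound is correct but more roundabout than necessary: you apply the renormalization formula to each iterate $f_n$ of the scheme behind Lemma~\ref{a01} and propagate $J_n:=\int_{\Si_T^+}|n_x\cdot v|\chi(\g_+f_n)=0$ by induction from $J_0=0$; the paper instead applies Lemma~\ref{trace} \emph{directly} to the fixed point $f_a$ with $\chi(\iota)=(\iota-M)_+^2$ and $\vp=1$, observes that the boundary term $\int_{\Si_t}(n_x\cdot v)(f_a-M)_+^2$ is \emph{nonnegative} by precisely the reflection change-of-variables and the inequality $(a\iota-M)_+\le a(\iota-M)_+$ you quote, drops it, and closes with Grönwall. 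Same mechanism, fewer moving parts.

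For the limit $a\to1$ your route (weak-$*$ compactness of $f_{a_k}$ and of $\g f_{a_k}$ in $L^\infty$, identification of the boundary condition by testing against $L^1(\Si_T^-,\dif\m)$, and uniqueness of the trace) is a legitimate alternative, but it leaves a gap you gloss over: you need the limit $f$ to belong to $C^0([0,T];L^2(\OO))$, since that is built into the definition of a weak solution (and is needed for the uniqueness result of Corollary~\ref{unique} to even apply). Weak $L^2$ convergence plus a uniform bound in $C^0([0,T];L^2)$ only yields $f\in L^\infty([0,T];L^2)$, or at best weak continuity in time; ``a standard weak-in-time compactness argument'' does not deliver strong $C^0$ in time without an extra ingredient (e.g.\ an absolutely continuous energy identity for the limit, which is circular to invoke before knowing $f$ is a weak solution, or a strong Cauchy estimate in $a$). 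The paper side-steps this entirely.

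Finally, your closing diagnosis is wrong: a Cauchy-in-$a$ estimate does \emph{not} fail, and the paper uses exactly one. Picking $\chi(\iota)=\iota^2$, $\vp=\eta_R$ (a reflection-invariant cut-off in $v$) in~\eqref{ren} for $f_{a_1}-f_{a_2}$, one finds the boundary term equals
\begin{equation*}
\int_{\Si_t^+}(n_x\cdot v)\Big[(u_1-u_2)^2-(a_1u_1-a_2u_2)^2\Big]\eta_R,\qquad u_i:=\g_+f_{a_i},
\end{equation*}
after the reflection change of variables, and the bracket is $O\!\big((1-a_1)+(1-a_2)\big)$ \emph{uniformly}, precisely thanks to the uniform $L^\infty$ bound; since $(n_x\cdot v)\eta_R$ has finite mass for each $R$, the boundary term vanishes as $a_1,a_2\to1$. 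This is the point you appear to have missed: what makes the Cauchy estimate work is the same uniform $L^\infty$ trace bound that you exploit, used against $\eta_R$ rather than against an arbitrary $L^1$ test function. You should replace your final paragraph accordingly; as it stands it contains a false statement, and acknowledging the available strong Cauchy estimate would also let you close the $C^0([0,T];L^2)$ gap.
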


\begin{proof}
We may assume that $c$ and $G_0$ are nonpositive in $\OO_T$; otherwise, we consider the equation solved by $e^{\Lambda t}f-e^{(1+\Lambda)t}\|G_0\|_{L^\infty(\OO_T)}$. Based on the proof of Lemma~\ref{a01} with the boundedness estimate given by Lemma~\ref{max}, for any $a\in[0,1)$, there exists a unique bounded weak solution $f_a$ to \eqref{existence} in $\OO_T$ associated with $f_a|_{t=0}=\fin$ and $\gamma_-f_a=a\RR f_a$. In the light of Lemma~\ref{trace}, we take $\chi(\iota)=(\iota-M)_+^2$ and $\varphi=1$ in the formula~\eqref{ren}, for the constant $M:=\|\fin\|_{L^\infty(\OO)}$. 
Taking the boundary condition into account, applying Cauchy-Schwarz inequality and the fact that $f_a(f_a-M)_+\ge (f_a-M)_+^2$, we obtain 
\begin{align*}
\begin{aligned}
\int_{\{t\}\times\OO} (f_a-M)_+^2
\le\int_{\{t\}\times\OO} (f_a-M)_+^2 + \int_{\Sigma_t}(n_x\cdot v)(f_a-M)_+^2&\\
\le 2\int_{\OO_t} \left[(f_a-M)_+^2 + cf_a(f_a-M)_+ \right]
\lesssim \int_{\OO_t} (f_a-M)_+^2&. 
\end{aligned}
\end{align*}
By Gr\"onwall's inequality, we acquire the upper bound that $\|(f_a)_+\|_{L^\infty(\OO_T)}\le M$. Similarly, by taking $\chi(\iota)=(-\iota-M)_+^2$ with a reduction to nonnegative $c$ and $G_0$, we get the lower bound that $\|(f_a)_-\|_{L^\infty(\OO_T)}\le M$. Together with Lemma~\ref{a01} and Lemma~\ref{trace}, we arrive at the estimates~\eqref{drglobal} for $f_a$ with $a\in[0,1)$. 

It remains to deal with the case $a=1$. Let us first assume that $\nabla_vA$, $G_0\phi$, $\fin\phi$ are bounded, where $\phi:=\la v\ra^q$ for some (large) constant $q>0$ to be determined. It is straightforward to check that the functions 
\begin{align*}
F&:=f\phi,\\ 
G_1'&:= -2Af\nabla_v\phi,\\ 
G_0'&:= fA:D_v^2\phi + f(\nabla_vA-B)\cdot\nabla_v\phi +G_0\phi
\end{align*} 
verify 
\begin{align*}
(\p_t+v\cdot\nabla_x)F = \nabla_v\cdot(A\nabla_vF) +B\cdot\nabla_v F +cF
+\nabla_v\cdot G_1' + G_0' {\quad\rm in\ } \OO_T. 
\end{align*} 
Noticing that $\nabla_v\phi=qv\la v\ra^{-2}\phi$ and $D_v^2\phi=q(q-1)v\otimes v\la v\ra^{-4}\phi+qI_d\la v\ra^{-2}\phi$, the functions $\nabla_v\cdot G_1'$ and $G_0'$ are recast as 
\begin{align*}
\nabla_v\cdot G_1'&= \frac{4qA:v\otimes v}{\la v\ra^4}F -\frac{2q\,\trace(A) + 2q\nabla_vA\cdot v}{\la v\ra^2}F 
- \frac{2qA v}{\la v\ra^2}\cdot\nabla_vF,\\
G_0'&= \frac{q(q-1)A:v\otimes v}{\la v\ra^4}F + \frac{q\,\trace(A)+q(\nabla_vA-B)\cdot v}{\la v\ra^2} F +G_0\phi. 
\end{align*} 
It thus turns out that the function $F$ solves 
\begin{align}\label{existenceFa}
(\p_t+v\cdot\nabla_x)F = \nabla_v\cdot(A\nabla_vF) +B'\cdot\nabla_v F +c'F +G_0\phi  {\quad\rm in\ } \OO_T, 
\end{align} 
where the new bounded coefficients $B',c'$ are defined by 
\begin{align*}
B'&:=B- \frac{2q A v}{\la v\ra^2},\\ 
c'&:=c+\frac{q(q+3)A:v\otimes v}{\la v\ra^4} -\frac{q\,\trace(A)+q(\nabla_vA+B)\cdot v}{\la v\ra^2}. 
\end{align*} 
In the same manner as before, for fixed $a\in[0,1)$, we derive the solution $F_a$ to \eqref{existenceFa} associated with $F_a|_{t=0}=\fin\phi$ and $\gamma_-F_a=a\RR F_a$, which satisfies \eqref{drglobal} with $G_0$ and $\fin$ replaced by $G_0\phi$ and $\fin\phi$. Equivalently, we obtain the solution $f_a$ to \eqref{existence} associated with $f_a|_{t=0}=\fin$ and $\gamma_-f_a=a\RR f_a$, satisfying \eqref{drglobal} and
\begin{align}\label{fa12}
\|\gamma f_a\la v\ra\|_{L^2(\Sigma_T)} 
\lesssim \|\gamma f_a\phi\|_{L^\infty(\Sigma_T)} 
\lesssim \|G_0\phi\|_{L^\infty(\OO_T)} + \|\fin\phi\|_{L^\infty(\OO)}, 
\end{align} 
where we may choose $q=d+3$ for the first inequality above. Now we argue by approximation. Let $a_i\in[0,1)$ for $i=1,2$, and $f_{a_i}$ be the solution to \eqref{existenceFa} associated with $f_{a_i}|_{t=0}=\fin$ and $\gamma_-f_{a_i}=a_i\RR f_{a_i}$, which satisfies \eqref{fa12}. Picking $\chi(\iota)=\iota^2$ and $\varphi=1$ in the renormalization formula~\eqref{ren} satisfied by $f_{a_i}$, as well as using the Cauchy-Schwarz inequality, yields that 
\begin{align*}
\int_{\{t\}\times\OO} (f_{a_1}-f_{a_2})^2 + \int_{\Sigma_t}(n_x\cdot v) (f_{a_1}-f_{a_2})^2
+\int_{\OO_t}|\nabla_v(f_{a_1}-f_{a_2})|^2
\lesssim \int_{\OO_t}(f_{a_1}-f_{a_2})^2. 
\end{align*}
We also observe that the boundary term above can be written as 
\begin{align*}
\int_{\Sigma_t}(n_x\cdot v) (f_{a_1}-f_{a_2})^2
= \int_{\Sigma_t^+}(n_x\cdot v) \left[(f_{a_1}-f_{a_2})^2-(a_1f_{a_1}-a_2f_{a_2})^2\right]\\
= \int_{\Sigma_t^+}(n_x\cdot v) \left[(1-a_1^2)(f_{a_1}-f_{a_2})^2-2a_1(a_1-a_2)f_{a_1}(f_{a_1}-f_{a_2})-(a_1-a_2)^2f_{a_2}^2\right]&. 
\end{align*}
Owing to the boundedness estimate~\eqref{fa12} for $f_{a_1},f_{a_2}$, the above boundary term tends to zero as $a_1,a_2\rightarrow1$. We then deduce that the limiting function $f$ of $f_a$ as $a\rightarrow1$ satisfies \eqref{drglobal} and solves \eqref{existenceFa} associated with $f|_{t=0}=\fin$ and $\gamma_-f=\RR f$. 

Next, we have to remove the additional boundedness assumptions on $\nabla_vA$, $G_0\phi$, $\fin\phi$. To this end, we approximate $A$, $G_0$, $\fin$ by $A^j$, $G_0^j$, $\fin^j$ in the sense that $A^j\rightarrow A$ pointwisely,  $(G_0^j,\fin^j)\rightarrow (G_0,\fin)$ strongly in $L^2$ and in the weak-* topology of $L^\infty$ as $j\rightarrow\infty$, where $A^j$, $G_0^j$, $\fin^j$ enjoy the same assumptions as $A$, $G_0$, $\fin$, and additionally $\nabla_vA^j$, $G_0^j\phi$, $\fin^j\phi$ are bounded for each $j$. It produces $f^j$ satisfies \eqref{drglobal} and solves \eqref{existence}, with $A$ and $G_0$ replaced by $A^j$ and $G_0^j$, associated with $f^j|_{t=0}=\fin^j$ and $\gamma_-f^j=\RR f^j$. It follows that $g:=f^{j_1}-f^{j_2}$ verifies  
\begin{align*}
(\p_t+v\cdot\nabla_x)g = \nabla_v\cdot\big(A^{j_1}\nabla_vg\big) +B\cdot\nabla_vg +cg
+ \nabla_v\cdot\big((A^{j_1}-A^{j_2})\nabla_vf^{j_2}\big)  +G_0^{j_1}-G_0^{j_2}.  
\end{align*} 
Let the constant $R\ge1$, and $\eta_R(v)\in C_c^\infty(B_{2R})$ be a radial function valued in $[0,1]$ such that $\eta_R|_{B_R}\equiv1$. Applying the renormalization formula~\eqref{ren} satisfied by $g$ with $\chi(\iota)=\iota^2$ and $\varphi=\eta_R$, and using the Cauchy-Schwarz inequality, we have 
\begin{align*}
\int_{\{t\}\times\OO} g^2 \eta_R + \int_{\Sigma_t}(n_x\cdot v) g^2\eta_R
+\int_{\OO_t}|\nabla_vg|^2\eta_R\\
\lesssim \int_{\OO_t} \left(g^2+\big|G_0^{j_1}-G_0^{j_2}\big|^2 
+ \big|A^{j_1}-A^{j_2}\big|^2\big|\nabla_vf^{j_2}\big|^2 \right) 
+ \int_\OO \big(\fin^{j_1}-\fin^{j_2}\big)^2 \eta_R&. 
\end{align*}
Here we point out that $\int_{\Sigma_t}(n_x\cdot v) g^2\eta_R=0$ by the boundary conditions on $f^{j_1},f^{j_2}$. In view of the boundedness estimates~\eqref{drglobal} for $f^{j_1},f^{j_2}$, we are able to send $R\rightarrow\infty$. Then, by extracting subsequences in $j_1,j_2\rightarrow\infty$, we know that the limiting function $f$ of $f^j$ as $j\rightarrow1$ satisfies \eqref{drglobal} and solves \eqref{existenceFa} associated with $f|_{t=0}=\fin$ and $\gamma_-f=\RR f$. Finally, the uniqueness of weak solutions follows from Corollary~\ref{unique}. 
\end{proof}

\section{Regularity for inflow boundary problems}\label{inflow}
We prove in this section the regularity of solutions to \eqref{FP} associated with the inflow boundary condition. Throughout this section we assume that $\Omega$ is a bounded domain in $\R^d$ with $\p\Omega\in C^{1,1}$. 

Let us first introduce the notion of subsolution we will use intensively. 

\begin{definition}
Let $\D$ be a domain in $\R^{2d}$. We say a function $f$ is a \emph{subsolution} to \eqref{FP} in $\D_T=(0,T)\times\D$, if it satisfies $f\in C^0([0,T];L^2(\D))$ and $\nabla_vf\in L^2(\D_T)$, and for any convex nondecreasing $\chi\in C^{0,1}(\R)$ such that $\chi(\iota)=O(\iota^2)$ as $|\iota|\rightarrow\infty$, and any nonnegative $\varphi\in C^1_c(\D_T)$, 
we have 
\begin{align*}
\begin{aligned}
0\le \int_{\D_T} \big[\chi(f)(\p_t+v\cdot\nabla_x)\varphi -A\nabla_v\chi(f) \cdot\nabla_v\varphi
+ \varphi B\cdot\nabla_v\chi(f) + c f\chi'(f)\varphi + s\chi'(f)\varphi \big].  
\end{aligned}
\end{align*}
\end{definition}

\begin{remark}
Suppose the boundary $\p\D\in C^{0,1}$ consists only of finite boundary portions with respect to $x$ and $v$. In view of Lemma~\ref{max}, one can check by approximation that if $f$ is a weak solution to \eqref{FP} in $\D_T$ with nonpositive value on $\p\D_T$, then after zero extension outside of $\D_T$, the function $f_+$ is a subsolution to \eqref{FP} in $\R^{1+2d}$ with the source term $s$ replaced by $s\mathbbm{1}_{f>0}$. 
\end{remark}

\subsection{Preliminary estimates}
This subsection is devoted to some a priori estimates serving as building blocks in the sequel. Let us first prove the basic energy estimate for weak solutions in the presence of spatial boundaries. 
\begin{lemma}[Local energy estimate]\label{energy}
For any weak solution $f$ to \eqref{FP} in $\OO_T$, and any function $\eta\in C_c^1([0,T]\times\overline{\Omega}\times B_2(v_0))$ valued in $[0,1]$ with $v_0\in\R^d$, we have 
\begin{align*}
\int_{\OO_T}|\nabla_vf|^2\eta^2 
\lesssim\left(1+\|\eta\|_{C^1}^2\right)\int_{\supp\eta}\left(\lvv f^2 +s^2\right)
 +\int_{\Ge} f^2\eta^2 \dif\mu. 
\end{align*} 
\end{lemma}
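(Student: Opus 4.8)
The plan is to run the classical Caccioppoli/energy argument, exploiting that $f$ is a weak solution so that the renormalization formula is at our disposal. First I would test the equation against $\vp = f\eta^2$; more precisely, since $f$ is merely a weak solution and we must control the spatial boundary term, I would invoke Lemma~\ref{trace} (or, for the approximating solutions, Lemma~\ref{max}) with $\chi(\iota)=\iota^2$ and the test function $\vp=\eta^2$ on the time slab $\OO_t$. This produces, for each $t\in(0,T]$,
\begin{equation*}
\int_{\{t\}\times\OO}f^2\eta^2 - \int_{\{0\}\times\OO}f^2\eta^2 + \int_{\Si_t}(n_x\cdot v)(\g f)^2\eta^2
= \int_{\OO_t}\big[f^2(\p_t+v\cdot\nabla_x)(\eta^2) - 2A\nabla_vf\cdot\nabla_v(\eta^2)\, \tfrac12 - 2\eta^2 A\nabla_vf\cdot\nabla_vf + \cdots\big],
\end{equation*}
where the omitted terms are the lower-order ones $\eta^2 B\cdot\nabla_v(f^2)$, $cf^2\eta^2$, $sf\eta^2$. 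The key point is that the boundary integral $\int_{\Si_t}(n_x\cdot v)(\g f)^2\eta^2$ splits over $\G_+$ and $\G_-$: the $\G_+$ part is nonnegative and gets thrown away, while the $\G_-$ part is bounded by $\int_{\Si_T^-}(\g f)^2\eta^2|n_x\cdot v| \le \int_{\Ge}f^2\eta^2\dif\m$ (recalling $\dif\m = |n_{t,x}\cdot(1,v)|\dif\s_{t,x}\dif v$ and $n_{t,x}=(0,n_x)$ on the lateral boundary). Likewise the slice at $t=0$ contributes $\int_{\{0\}\times\overline\OO}f^2\eta^2 \le \int_{\Ge}f^2\eta^2\dif\m$.

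Next I would isolate the good term $2\int_{\OO_t}\eta^2 A\nabla_vf\cdot\nabla_vf \ge 2\Lambda^{-1}\int_{\OO_t}|\nabla_vf|^2\eta^2$ by moving everything else to the right. The transport term $\int f^2(\p_t+v\cdot\nabla_x)(\eta^2)$ is controlled by $\|\eta\|_{C^1}^2$ times $\int_{\supp\eta}\lvv f^2$, using that $|v|\lesssim\lvv$ on $\supp\eta\subset\{|v-v_0|<2\}$. The cross term $2\int A\nabla_vf\cdot(\eta\nabla_v\eta)f$ is absorbed by Young's inequality: bound it by $\tfrac12\Lambda^{-1}\int|\nabla_vf|^2\eta^2 + C\int|\nabla_v\eta|^2 f^2$, and the first piece is swallowed by the good term. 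The drift term $\int \eta^2 B\cdot\nabla_v(f^2) = 2\int\eta^2 f B\cdot\nabla_vf$ is handled the same way, absorbing a small multiple of $\int|\nabla_vf|^2\eta^2$ and leaving $C\int\eta^2 f^2$. The terms $\int cf^2\eta^2$ and $\int sf\eta^2$ are immediate from $|c|\le\Lambda$ and Young's inequality ($sf\le \tfrac12 s^2 + \tfrac12 f^2$). Collecting, dropping the nonnegative $\int_{\{t\}\times\OO}f^2\eta^2$ on the left, and taking $t=T$, one arrives at $\int_{\OO_T}|\nabla_vf|^2\eta^2 \lesssim (1+\|\eta\|_{C^1}^2)\int_{\supp\eta}(\lvv f^2 + s^2) + \int_{\Ge}f^2\eta^2\dif\m$, which is the claim.

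The main obstacle, and the reason one cannot simply quote the parabolic Caccioppoli inequality, is the justification of the boundary/trace manipulations: $f$ is only known to lie in $C^0([0,T];L^2)$ with $\nabla_vf\in L^2$, so $\g f$ need not be well-defined by restriction, and the identity used above is exactly the content of the renormalization formula Lemma~\ref{trace}. One has to check that $\chi(\iota)=\iota^2$ and $\vp=\eta^2$ meet its hypotheses (here $\p_v\D=\emptyset$ since $\D=\OO$, so the constraint $\chi'(f)=0$ on $\O\times\p\V$ is vacuous, and $\chi(\iota)=O(\iota^2)$ holds), and that the $\chi''(f)(A\nabla_vf+G_1)\cdot\nabla_vf$ term — with $G_1=0$ here — reproduces the coercive $2\eta^2 A\nabla_vf\cdot\nabla_vf$ with the correct sign. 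Everything else is the routine Cauchy–Schwarz/Young bookkeeping sketched above, and the $\lvv$ weight is just a harmless constant on the bounded velocity support of $\eta$.
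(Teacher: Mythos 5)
Your proposal is correct and follows essentially the same route as the paper: apply the renormalization formula (Lemma~\ref{trace}) with $\chi(\iota)=\iota^2$ and $\vp=\eta^2$, discard the nonnegative $\G_+$ and $\{t=T\}$ contributions, bound the $\{t=0\}$ slice and the $\G_-$ portion by $\int_{\Ge}f^2\eta^2\dif\m$, and absorb the cross and drift terms with Young's inequality against the coercive $A\nabla_vf\cdot\nabla_vf$ term. The only blemish is a typographical one in the displayed identity (a stray factor $\tfrac12$ and a missing $f$ in $A\nabla_v(f^2)\cdot\nabla_v(\eta^2)=2fA\nabla_vf\cdot\nabla_v(\eta^2)$), but the surrounding text makes clear you handle the cross term correctly.
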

\begin{proof}
In view of the renormalization formula~\eqref{ren} in Lemma~\ref{trace}, picking $\chi(\iota)=\iota^2$ and $\varphi=\eta^2$ yields that 
\begin{align*}
\begin{aligned}
\int_{\{T\}\times\OO}f^2\eta^2-\int_{\{0\}\times\OO}f^2\eta^2
+\int_{\Sigma_T} (n_x\cdot v) f^2\eta^2
+2\int_{\OO_T} \eta^2 A\nabla_vf\cdot\nabla_vf&\\
=2\int_{\OO_T}\left[-2f\eta A\nabla_vf\cdot\nabla_v\eta +f\eta^2B\cdot\nabla_vf +cf^2\eta^2 +sf\eta^2
+f^2\eta(\p_t+v\cdot\nabla_x)\eta\right]&. 
\end{aligned}
\end{align*}
It then turns out that 
\begin{align*}
\begin{aligned}
\int_{\OO_T} |\nabla_vf|^2\eta^2
\lesssim
\int_{\OO_T}\left(|f||\nabla_vf|  \eta|\nabla_v\eta|
+ |f||\nabla_vf|\eta^2 +f^2\eta^2 +|s||f|\eta^2\right)&\\
+\int_{\OO_T}f^2\eta|(\p_t+v\cdot\nabla_x)\eta|
+\int_{\Ge}f^2\eta^2\dif\mu&. 
\end{aligned}
\end{align*}
Applying the Cauchy–Schwarz inequality, we obtain  
\begin{align*}
\begin{aligned}
\int_{\OO_T} |\nabla_vf|^2\eta^2
\lesssim
\int_{\OO_T}\left[f^2|\nabla_v\eta|^2
+ f^2\eta^2  + s^2 \eta^2
+f^2\eta|(\p_t+v\cdot\nabla_x)\eta|\right]
+\int_{\Ge}f^2\eta^2\dif\mu&, 
\end{aligned}
\end{align*}
which implies the desired result. 
\end{proof}

We then state three lemmas known in the literature. One of the main results in \cite{GIMV} is the following interior regularity estimate for solutions to \eqref{FP}; see \cite[Theorem~1.4]{GIMV}. We have at our disposal its scaled version as follows. Here we recall some notations presented in \S\ref{invariant} and \S\ref{notations}. For $z_0=(t_0,x_0,v_0)$, $Q_r(z_{0})=\big\{ (t,x,v): \, t_{0} - r^{2} < t \le t_{0}, \, |x - x_{0} - (t - t_{0})v_{0}| < r^{3},\, |v - v_{0}| < r\big\}$. A constant is said to be universal if it depends only on $d,T,\Lambda,m,\beta,\Omega,p,q,m,l,\omega,\epsilon$ appearing below. 

\begin{lemma}[Interior H\"older estimate]\label{interior}
There exists a universal constant $\alpha\in(0,1)$ such that for any constants $0<r<R\le1$, and any weak solution $f$ to \eqref{FP} in $Q_R(z_0)$ with $z_0\in\R^{1+2d}$, we have 
\begin{align*}
(R-r)^\alpha[f]_{C^\alpha(Q_r(z_0))}
\lesssim \|f\|_{L^\infty(Q_R(z_0))} +\|s\|_{L^\infty(Q_R(z_0))}. 
\end{align*} 
\end{lemma}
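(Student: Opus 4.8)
The statement is simply the scaled (localized) form of the interior H\"older estimate \cite[Theorem~1.4]{GIMV}, so the plan is to \emph{reduce to that theorem by the invariant rescaling} introduced in \S\ref{invariant}. Fix $z_0$ and $0<r<R\le 1$. The first step is to replace the two cylinders $Q_r(z_0)\subset Q_R(z_0)$ by a single normalized configuration. Choose the scale $\rho:=R-r$ (or a fixed fraction of it), and for each point $\bar z\in Q_r(z_0)$ consider $Q_\rho(\bar z)$; by the triangle-type inclusion for the cylinders $Q_\rho(\bar z)\subset Q_R(z_0)$ that follows from the definition of $Q_\rho(\cdot)$ and the bound $\rho\le R$ (one checks the three defining inequalities for position, velocity and time separately, using $|\bar v-v_0|<r$, $\rho\le R-r$, and $\rho^2\le R^2-r^2$ after possibly shrinking $\rho$ by a universal factor). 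Then $f$ restricted to $Q_\rho(\bar z)$ is a weak solution of \eqref{FP} there.

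The second step is the change of variables. Let $\tilde f:=f\circ\T_{\bar z,\rho}$ on $Q_1$. As recorded after the definition of $\T_{z_0,r}$, the equation \eqref{FP} is invariant under $\T_{\bar z,\rho}$ in the sense that $\tilde f$ solves an equation of the same structure \eqref{FP} on $Q_1$, with new coefficients $\tilde A,\tilde B,\tilde c,\tilde s$ still satisfying \eqref{H}; one should record how the lower-order terms and the source transform, the point being that $\tilde s$ picks up a factor $\rho^2$ (and $\tilde B,\tilde c$ pick up $\rho$, so \eqref{H} survives for $\rho\le 1$), hence $\|\tilde s\|_{L^\infty(Q_1)}\le \rho^2\|s\|_{L^\infty(Q_\rho(\bar z))}\le \|s\|_{L^\infty(Q_R(z_0))}$. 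Apply \cite[Theorem~1.4]{GIMV} on $Q_1$ (say, comparing $Q_{1/2}$ to $Q_1$): it yields a universal $\a\in(0,1)$ with
\begin{equation*}
[\tilde f]_{C^\a(Q_{1/2})}\lesssim \|\tilde f\|_{L^\infty(Q_1)}+\|\tilde s\|_{L^\infty(Q_1)}\lesssim \|f\|_{L^\infty(Q_R(z_0))}+\|s\|_{L^\infty(Q_R(z_0))}.
\end{equation*}

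The third step is to translate the seminorm back. The map $\T_{\bar z,\rho}$ scales $t$ by $\rho^2$, $x$ by $\rho^3$ (plus a shear by $\rho^2 v_0$, which is Lipschitz and does not affect H\"older scaling in the relevant quasi-metric), and $v$ by $\rho$, so the natural kinetic distance scales by $\rho$ and one gets $[f]_{C^\a(Q_{\rho/2}(\bar z))}\lesssim \rho^{-\a}[\tilde f]_{C^\a(Q_{1/2})}$. Since $\bar z\in Q_r(z_0)$ was arbitrary and the cylinders $Q_{\rho/2}(\bar z)$ cover $Q_r(z_0)$ with $\rho\approx R-r$, a standard covering/patching argument for H\"older seminorms (pairs of points at distance $\lesssim\rho$ are handled by a single small cylinder; far pairs by the $L^\infty$ bound) upgrades the local bound to $(R-r)^\a[f]_{C^\a(Q_r(z_0))}\lesssim \|f\|_{L^\infty(Q_R(z_0))}+\|s\|_{L^\infty(Q_R(z_0))}$, as claimed.

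The only genuinely delicate point is the \emph{geometry of the rescaled cylinders}: verifying the inclusion $Q_\rho(\bar z)\subset Q_R(z_0)$ with the correct power of $\rho$ in each of the three coordinates (the $x$-inclusion is the touchy one because of the $r^3$ scaling and the $r^2\tilde t v_0$ shear), and making sure the covering argument respects the non-isotropic scaling so that the factor $(R-r)^\a$ — and not some other power — comes out. Everything else (invariance of \eqref{FP} under $\T$, preservation of \eqref{H} for $\rho\le1$, the patching of H\"older seminorms) is routine given \cite[Theorem~1.4]{GIMV}.
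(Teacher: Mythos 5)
The paper does not actually write out a proof of Lemma~\ref{interior}: it simply cites \cite[Theorem~1.4]{GIMV} and remarks that this is ``its scaled version''. Your proposal supplies exactly the scaling argument the authors intend — rescale by $\T_{\bar z,\rho}$ with $\rho\approx R-r$, use the invariance of \eqref{FP} under $\T$ (with $\tilde B,\tilde c,\tilde s$ picking up factors $\rho,\rho^2,\rho^2$ so that \eqref{H} survives), apply the normalized interior estimate on $Q_1$, and patch via a covering of $Q_r(z_0)$ — and the inclusion $Q_\rho(\bar z)\subset Q_R(z_0)$ in fact holds with $\rho=R-r$ and no shrinking, as one checks coordinate by coordinate, so your caution there is safe. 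This is the same approach as the paper's.
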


The proof of such H\"older estimate essentially relies on the following two lemmas for subsolutions, that is, the local boundedness estimate \cite[Theorem 3.1]{GIMV} and the oscillation reduction \cite[Lemma~4.5]{GIMV}. The local boundedness estimate from $L^p$ to $L^\infty$ can be rephrased as follows. 
\begin{lemma}[Local boundedness]\label{bdd}
Let the constant $p>0$. For any constant $0<r<R\le1$, and any subsolution $f$ to \eqref{FP} in $Q_R(z_0)$ with $z_0\in\R^{1+2d}$, we have  
\begin{align*}
\|f_+\|_{L^\infty(Q_r(z_0))}
\lesssim (R-r)^{-(2+4d)/p}\|f_+\|_{L^p(Q_R(z_0))} 
+ \|s\|_{L^\infty(Q_R(z_0))}. 
\end{align*} 
\end{lemma}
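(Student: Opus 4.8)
The plan is to prove the $L^p\to L^\infty$ local boundedness estimate via De Giorgi's iteration, applied to the truncations of $f_+$ along a shrinking sequence of cylinders. First I would reduce to the case $R=1$, $r=1/2$ by the scaling-translation invariance described in \S\ref{invariant}: if $f$ is a subsolution in $Q_R(z_0)$, then $f\circ\T_{z_0,R}$ is a subsolution of an equation with the same structure in $Q_1$, and a rescaled source term; iterating between intermediate radii then recovers the general $(R-r)$-dependence. I would also normalize by dividing by $\|f_+\|_{L^p(Q_R(z_0))}+\|s\|_{L^\infty(Q_R(z_0))}$, so it suffices to show that if this quantity is $\le 1$ then $f_+\lesssim 1$ on $Q_{1/2}$.

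Next I would set up the iteration. Fix levels $\ell_k := M(1-2^{-k})$ for a large constant $M$ to be chosen, and radii $r_k := \tfrac12(1+2^{-k})$ decreasing from $1$ to $\tfrac12$, with smooth cutoffs $\vp_k\in C_c^1(Q_{r_k})$ equal to $1$ on $Q_{r_{k+1}}$ and $\|\vp_k\|_{C^1}\lesssim 2^{k}$ (measured in the kinetic scaling). Testing the subsolution inequality with the convex nondecreasing function $\chi(\iota)=(\iota-\ell_k)_+$ (approximated in $C^{0,1}$ and then passing to the limit, using $\chi(\iota)=O(\iota^2)$) and the test function $\vp_k^2$, together with the ellipticity $A\,\x\cdot\x\ge\Lambda^{-1}|\x|^2$ and Cauchy–Schwarz on the $B$ and $c$ terms and the source term $s\chi'(f)\vp_k^2=s\,\mathbbm 1_{f>\ell_k}\vp_k^2$, yields an energy inequality controlling $\sup_t\int (f-\ell_k)_+^2\vp_k^2$ and $\int|\nabla_v (f-\ell_k)_+|^2\vp_k^2$ by $2^{2k}$ times the $L^2$-mass of $(f-\ell_k)_+$ on $Q_{r_k}$, plus the measure of $\{f>\ell_k\}\cap Q_{r_k}$ coming from $\|s\|_\infty\le 1$. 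This is exactly the local energy estimate machinery of Lemma~\ref{energy}, now localized away from the boundary so the $\Ge$ term is absent.

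The heart of the argument is the gain-of-integrability step: from the bound on $\sup_t\int(\cdot)^2 + \int|\nabla_v(\cdot)|^2$ one must upgrade to a bound on $\|(f-\ell_k)_+\vp_k\|_{L^{2+\epsilon}}$ for some universal $\epsilon>0$. In the kinetic setting this is not the classical Sobolev embedding; it is the gain of integrability for the operator $\p_t+v\cdot\nabla_x-\Delta_v$ established in \cite{GIMV} (their Lemmas on the "intermediate value" / averaging estimate), which provides $\|u\|_{L^{2+\epsilon}}\lesssim \|u\|_{L^\infty_tL^2}+\|\nabla_v u\|_{L^2}+\|(\p_t+v\cdot\nabla_x)u\|_{\text{(negative order)}}$, and here $(\p_t+v\cdot\nabla_x)(f-\ell_k)_+\vp_k$ is controlled in the requisite dual norm by the equation itself. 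Feeding this back through Hölder's inequality on the super-level set, one obtains a nonlinear recursion $a_{k+1}\le C b^k a_k^{1+\epsilon'}$ for $a_k := \|(f-\ell_k)_+\|_{L^2(Q_{r_k})}^2$ (or the appropriate normalized energy), where $b>1$ and $\epsilon'>0$ are universal. By the standard fast-geometric-convergence lemma, $a_k\to 0$ provided $a_0$ is below a universal threshold; since $a_0\le \|f_+\|_{L^2(Q_1)}^2$ and we may first pass from $L^p$ to $L^2$ (either $p\ge 2$, trivial, or $p<2$ by interpolation with a crude $L^\infty$ bound from one preliminary iteration step, a standard trick), choosing $M$ large enough makes $a_0$ as small as needed, giving $f_+\le M$ on $Q_{1/2}$. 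The main obstacle is precisely the kinetic gain-of-integrability input and the careful bookkeeping of the negative-order norm of $(\p_t+v\cdot\nabla_x)$ applied to the truncated, cut-off function; but since Lemma~\ref{bdd} is quoted as \cite[Theorem 3.1]{GIMV}, I would cite that estimate for this step rather than reprove it, and the remaining work is the De Giorgi iteration and the scaling reductions described above.
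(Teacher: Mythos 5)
The paper does not prove this lemma at all: it is quoted verbatim from \cite[Theorem~3.1]{GIMV} (after a trivial translation--scaling argument), so there is no internal proof to compare against. Your sketch is essentially the proof that \emph{is} in GIMV, and you correctly identify the one genuinely non-classical ingredient: the kinetic gain-of-integrability (velocity-averaging) estimate, which replaces the parabolic Sobolev embedding in the De Giorgi iteration. The scaling argument for the $(R-r)^{-(2+4d)/p}$ factor and the interpolation trick to go from $L^p$ ($p<2$) to $L^2$ are both standard and stated accurately.

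One small imprecision: you propose to test the subsolution inequality with $\chi(\iota)=(\iota-\ell_k)_+$, but this gives $\chi'=\mathbbm 1_{\iota>\ell_k}$, hence only an $L^1$-type identity, whereas the energy inequality you then write down, controlling $\sup_t\int(f-\ell_k)_+^2\vp_k^2$ and $\int|\nabla_v(f-\ell_k)_+|^2\vp_k^2$, requires the quadratic truncation $\chi(\iota)=(\iota-\ell_k)_+^2$ (convex, $C^{1,1}$, $O(\iota^2)$, and admissible in the paper's definition of subsolution). With that correction the Cacciopoli-type inequality follows exactly as in Lemma~\ref{energy} (with the $\Ge$ boundary term dropped since everything is compactly supported in the interior), and the remainder of your outline goes through. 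So: correct approach, correct key lemma, one notational slip in the choice of renormalization function.
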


The oscillation reduction states that if a subsolution is far away from its upper bound in a subset occupying some non-negligible space with a certain time lag, then it cannot get close to this bound in a localized region. 
\begin{lemma}[Oscillation reduction]\label{measure}
Let the constant $\omega\in(0,1)$, and the coefficient $c=0$. Then, there exist some (small) universal constants $\lambda,\varrho,\theta\in(0,1)$ such that for any subsolution $f$ to \eqref{FP} with $f\le1$ and $|s|\le\lambda$ in $Q_1$ satisfying 
\begin{align*}
\big|\{f\le 0\}\cap Q_{2\varrho}^-\big|\ge \omega| Q_{2\varrho}^-|, 
\end{align*}
for the shifted cylinder $Q_{2\varrho}^-:=Q_{2\varrho}\left(-1/2,0,0\right)\subset Q_1$, we have 
\begin{align*}
f\le 1-\theta {\quad in\ }Q_\varrho. 
\end{align*} 
\end{lemma}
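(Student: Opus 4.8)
The statement is the classical De Giorgi oscillation-reduction step for kinetic Fokker--Planck subsolutions, transported to the invariant cylinders $Q_r$. The plan is to combine the local boundedness estimate (Lemma~\ref{bdd}) with an intermediate-value lemma and an energy/De Giorgi iteration that converts a measure-theoretic smallness of the set $\{f>1-\text{(large)}\}$ into a pointwise bound. Since $c=0$, the function $g:=(f-(1-\nu))_+$ for suitable $\nu\in(0,1)$ is again (after the standard truncation, using the subsolution property tested against $\chi(\iota)=(\iota-k)_+^2$) a subsolution with source still bounded by $\lambda$, so we may iterate truncations $k_j=1-\nu 2^{-j}$.

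First I would establish a \emph{measure-to-pointwise} statement: there is a universal $\nu\in(0,1)$ such that if $f\le1$, $|s|\le\lambda$ in $Q_1$, and $|\{f>1-\nu\}\cap Q_\varrho|\le \epsilon_0|Q_\varrho|$ for a universal $\epsilon_0$, then $f\le1-\nu/2$ in $Q_{\varrho/2}$. This is the De Giorgi iteration: set $A_j:=\{f>k_j\}\cap Q_{r_j}$ with $r_j\downarrow \varrho/2$, use the local energy estimate (Lemma~\ref{energy}, applied on interior cylinders so the boundary term on $\Ge$ is absent — here $\D$ is an interior ball, no $\p\O$) to control $\int|\nabla_v (f-k_j)_+|^2\eta_j^2$, then invoke the kinetic gain of integrability built into Lemma~\ref{bdd} (or equivalently the $L^2\to L^\infty$ estimate applied to the truncated subsolution on shrinking cylinders) to get a nonlinear recursion $a_{j+1}\lesssim C^j a_j^{1+\beta}$ for $a_j:=|A_j|/|Q_{r_j}|$, which forces $a_\infty=0$ once $a_0$ is below a universal threshold. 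The source contribution is absorbed by choosing $\lambda$ small. This yields the pointwise improvement on $Q_{\varrho/2}$; renaming $\varrho$ (i.e. running the argument with $2\varrho$ in place of $\varrho$) gives the bound on $Q_\varrho$.

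Second, I would connect the hypothesis $|\{f\le0\}\cap Q_{2\varrho}^-|\ge\delta|Q_{2\varrho}^-|$ — smallness of the "positive, close to the max" set measured on the \emph{time-shifted} past cylinder $Q_{2\varrho}^-$ — to the smallness of $|\{f>1-\nu\}\cap Q_\varrho|$ on the present cylinder. This is exactly where the kinetic transport structure enters and is the \textbf{main obstacle}: unlike the parabolic case, one cannot simply use a Poincaré inequality in a single cylinder, because the spreading of the "good set" forward in time happens along the drift $v\cdot\nabla_x$. The mechanism is an intermediate-value / "ink-spreading" lemma: iterate finitely many times the measure estimate that says if $f$ is a subsolution below $1$ with $|\{f\le 0\}|$ a definite fraction of $Q_{2\varrho}^-$, then $|\{f>1-\nu_1\}|$ is a strictly smaller fraction of a slightly larger cylinder reaching later times; applying this chain $O(1)$ times with $\nu_j\to$ the target $\nu$ and the cylinders marching from $Q_{2\varrho}^-$ up to $Q_\varrho$ produces $|\{f>1-\nu\}\cap Q_\varrho|\le\epsilon_0|Q_\varrho|$. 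Each single step of this chain is itself an energy estimate plus the transport-adapted Poincaré/averaging inequality underlying the $L^2\to L^\infty$ bound of \cite{GIMV}, carried out on cylinders related by the invariant transformation $\T_{z_0,r}$ so that all constants stay universal.

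Finally, stitching: feed the output of the ink-spreading chain into the De Giorgi iteration of the first step to obtain $f\le1-\theta$ in $Q_\varrho$ with $\theta:=\nu/2$ universal, after fixing $\lambda,\varrho$ and the remaining parameters to satisfy the finitely many smallness constraints accumulated above. All localization to interior cylinders is legitimate because the subsolution definition is tested only against $\vp\in C^1_c(\D_T)$, so no boundary terms appear and the scaling invariance from \S\ref{invariant} keeps every constant universal.
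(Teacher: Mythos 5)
The paper does not prove Lemma~\ref{measure}; it is imported as a black box from the cited reference (their Lemma~4.5), so there is strictly no ``paper's own proof'' to compare against. That said, your two-stage outline is the correct skeleton of the proof in that reference: a De Giorgi measure-to-pointwise improvement fed by an intermediate-value (``ink-spreading'') step that transports good-set measure from the past cylinder $Q_{2\varrho}^-$ forward to $Q_\varrho$.

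The difficulty is that the part you yourself flag as the main obstacle is exactly where your sketch stops being a proof. You propose to establish the intermediate-value lemma by ``iterating finitely many times the measure estimate,'' each step being ``an energy estimate plus the transport-adapted Poincar\'e/averaging inequality,'' with cylinders marching forward in time. But you never write down a single such step, and in the hypoelliptic setting no elementary kinetic Poincar\'e inequality of the needed form is available: the drift $v\cdot\nabla_x$ breaks the classical Poincar\'e structure, and quantifying how a large good set at earlier times forces a small bad set at later times is precisely the hard analytic content. In the reference this isoperimetric-type step is proved not by iteration but by a soft compactness-and-contradiction argument: one assumes a sequence of subsolutions violating the conclusion with degenerating constants, extracts weak $L^2$ limits from the energy bound of Lemma~\ref{energy}, upgrades to strong convergence via a velocity averaging lemma (the hypoelliptic gain of regularity for kinetic transport with right-hand side in $H^{-1}_v$), and contradicts the measure hypothesis in the limit. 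Your proposal names the phenomenon but omits this ingredient, which is the crux of the lemma. The remaining parts of your plan are fine and standard: the truncation $(f-(1-\nu))_+$ is again a subsolution since $c=0$, Lemma~\ref{bdd} on shrinking cylinders converts measure smallness of $\{f>1-\nu\}$ on $Q_\varrho$ into the pointwise bound $f\le1-\nu/2$ on $Q_{\varrho/2}$ once $\lambda$ is small, all test functions are compactly supported in the interior so no boundary term from $\Ge$ appears, and the invariant transformation of \S\ref{invariant} keeps constants universal under rescaling.
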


\subsection{Local estimates}\label{inflowproof}
The following proposition lies at the core of our results. 

\begin{proposition}\label{local}
Assume that the constants $p\ge2$ and $\beta,\epsilon\in(0,1]$, and the functions $s\in L^\infty(\OO_T)$ and $g\in L^2(\Ge,\dif\mu)\cap L^\infty(\Ge)$. Let $f$ be a weak solution to \eqref{FP} in $\OO_T$ such that $f=g$ on $\Ge$.  
Then, for any $z_0=(t_0,x_0,v_0)\in\overline{\OO_T}$, we have 
\begin{align}\label{bddf}
\begin{aligned}
\|f\|_{L^\infty(\OO_T\cap B_1(z_0))} 
\lesssim \lvv^{\max\{1/2,\;\!(2+4d)/p\}}\|f\|_{L^p(\OO_T\cap B_2(z_0))} +\|s\|_{L^\infty(\OO_T\cap B_2(z_0))} &\\
+\|g\|_{L^2(\Ge\cap B_2(z_0),\;\!\dif\mu)} +\|g\|_{L^\infty(\Ge\cap B_2(z_0))}&;  
\end{aligned} 
\end{align}
if additionally $g\in C^\beta(\Ge)$, then there is some universal constant $\alpha\in(0,1)$ such that 
\begin{align}\label{holderf}
\begin{aligned}
\ [f]_{C^\alpha(\OO_T\cap B_{1}(z_0))}
\lesssim \lvv^{1/2+\epsilon}\|f\|_{L^\infty(\OO_T\cap B_2(z_0))} &\\
+\lvv^{\epsilon}\|s\|_{L^\infty(\OO_T\cap B_2(z_0))} 
+ [g]_{C^\beta(\Ge\cap B_{2}(z_0))}&. 
\end{aligned}
\end{align} 
\end{proposition}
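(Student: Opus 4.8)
The plan is to reduce the boundary estimates near an arbitrary $z_0=(t_0,x_0,v_0)$ to the interior tools (Lemmas~\ref{interior}, \ref{bdd}, \ref{measure}) via two devices: a velocity rescaling to normalize $\lvv$, and an \emph{extension across} the bad boundary portion $\G_+\cup\G_0$ using the existence theory of Section~\ref{EC}. First I would treat the case where $z_0$ is far from $\partial\O$ (say $\dist(x_0,\partial\O)$ bounded below in the natural kinetic units) — here $\OO_T\cap B_2(z_0)$ contains a full kinetic cylinder of definite size and \eqref{bddf}, \eqref{holderf} follow directly from Lemmas~\ref{bdd} and \ref{interior} after applying the invariant transformation $\T_{z_0,r}$ of \S\ref{invariant}; the factors $\lvv^{\max\{1/2,(2+4d)/p\}}$ and $\lvv^{1/2+\e}$ arise because a ball $B_2(z_0)$ in the Euclidean metric, once rescaled to unit kinetic scale, shrinks or grows by powers of $\lvv$ (the $x$-direction scales like $r^3$, $t$ like $r^2$, $v$ like $r$), so one loses a controlled power of $\lvv$ in converting Euclidean balls to kinetic cylinders. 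I would make this bookkeeping precise by covering $\OO_T\cap B_1(z_0)$ by $O(\lvv^{N})$ kinetic cylinders of radius $\approx\lvv^{-1}$ and summing (for the $L^\infty$ bound) or using that $C^\a$ seminorms on overlapping cylinders patch together (for the Hölder bound).

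The substantive case is $z_0$ on or near the phase boundary $\G=\partial\O\times\R^d$. The idea is: using $\partial\O\in C^{1,1}$, flatten the boundary locally so that near $x_0$ the domain looks like a half-space $\{x_d>0\}$ up to an error controlled by the $C^{1,1}$ norm; then extend $f$ to a neighborhood that includes part of $\{x_d<0\}$ by solving, via Corollary~\ref{globalin}, an inflow problem in an enlarged domain whose effective boundary portion $\Ge$ carries exactly the data $g$ where $g$ is prescribed (on $\{0\}\times\overline\OO$ and on $\Si_T^-$) and carries the (already known, locally bounded by Lemma~\ref{max}) trace of $f$ on the outgoing part $\G_+$. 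The point of the extension is that on the \emph{incoming} boundary of the enlarged half-space problem the data is now fully specified, so the extended function $\tilde f$ solves \eqref{FP} (with a modified, still bounded source incorporating the boundary data through the flattening diffeomorphism) in a genuine kinetic cylinder $Q_r(\tilde z_0)$ straddling the original boundary. Since the data $g$ is Hölder on $\Ge$, the extension $\tilde f$ — which agrees with $g$ on the new incoming boundary — satisfies the hypotheses needed to run the De Giorgi oscillation argument of Lemma~\ref{measure} on cylinders touching $\{x_d=0\}$: the Hölder continuity of the boundary data guarantees that $\{\tilde f\le 0\}$ (after subtracting the appropriate affine correction matching $g$) occupies a non-negligible fraction of the shifted cylinder, yielding oscillation decay at every dyadic scale and hence $\tilde f\in C^\a$ up to and across the boundary. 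Restricting back to $\OO_T$ gives \eqref{bddf}–\eqref{holderf}; the grazing set $\G_0$ needs no special treatment because it has been swallowed into the interior of the extended domain.

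The main obstacle I anticipate is the construction and control of the extension near $\G_0$ (the grazing/characteristic set): one must verify that the modified equation in the flattened, enlarged domain still has the structural form \eqref{H} with a source in $L^\infty$ — the $C^{1,1}$ regularity of $\partial\O$ is exactly what is needed so that the change of variables produces bounded (not merely measurable) lower-order perturbations and preserves ellipticity of $A$ in $v$ — and that the data assembled on the new incoming boundary (original $g$ on $\Si_T^-$ glued to the trace of $f$ on $\G_+$) is in $L^2(\dif\m)\cap L^\infty$, which is where the trace bound $\|\g_x f\|_{L^\infty}\le\|f\|_{L^\infty}$ from Lemma~\ref{trace}/Lemma~\ref{max} and the estimate~\eqref{maxglobal} of Corollary~\ref{globalin} are essential. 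A secondary technical point is uniformity: the size $r$ of the kinetic cylinder one can fit after flattening must be chosen uniformly in $z_0$ and degrade by at worst a fixed power of $\lvv$, which is what produces the exponents $1/2$, $(2+4d)/p$, and $1/2+\e$ in the statement (the $\e$ absorbing a logarithmic loss from iterating the oscillation decay). Once these are in place, the interior lemmas apply verbatim to the extended solution and the proposition follows.
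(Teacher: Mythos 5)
Your overall strategy — flatten the boundary via $\partial\O\in C^{1,1}$, extend the problem across $\G_+\cup\G_0$ so the characteristic set becomes interior, then apply the interior De Giorgi tools on the extended solution — is indeed the approach the paper takes. However, your description of the extension step has a genuine gap, and as written the construction would fail.

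The paper's extension is into the region $\{y_d>0,\ w_d\ge 0\}$ only, not into all of $\{y_d>0\}$, and this restriction is essential. With that choice, the effective boundary of the extended domain $\W^\natural$ is $\{y_d=0,\,w_d\le0\}$ (the original incoming and grazing parts) together with the ``floor'' $\{y_d>0,\,w_d=0\}$ and $\{t=0\}$; crucially, the data prescribed on this floor is \emph{not} the trace of $f$ on $\G_+$ but a Lipschitz extension $g^\natural$ of $g$, obtained by freezing $y_d=0$ in the argument of $g\circ\SS^{-1}$. You instead propose to feed in ``the (already known, locally bounded by Lemma~\ref{max}) trace of $f$ on the outgoing part $\G_+$.'' This cannot work: that trace is only $L^\infty$, not H\"older, so it cannot supply the modulus of continuity needed in the oscillation argument near $\G_0$; moreover, in the correct construction $\G_+$ lies in the interior of $\W^\natural$, so no data is prescribed on it at all — the extended solution coincides with $f$ in $\W$ by uniqueness of weak solutions, not by matching traces. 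If you instead extended into all of $\{y_d>0\}$, a new inflow boundary appears at $\{y_d=R,\,w_d<0\}$ which carries no natural data and whose characteristics re-enter $\O$, so the extended function would no longer agree with $f$ on the original domain.

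Two further omissions are worth flagging. First, the mechanism that makes Lemma~\ref{measure} applicable at a base point $(x_0,v_0)\in\G_0\cup\G_-$ is the \emph{zero extension} of $(\widetilde F-M_r)_+$ into the defective region $\{y_d>0,\,w_d<0\}\cup\{t<0\}$: since the rescaled cylinder $Q_1$ at such a base point meets this defective region in at least a fixed fraction of its measure, the zero set condition of Lemma~\ref{measure} is automatic. This is the geometric heart of the boundary oscillation decay, and you assert but do not identify it. Second, the paper treats base points on $\G_+$ separately — there the interior estimate Lemma~\ref{interior} applies directly to the extended solution because $\G_+$ is interior to $\W^\natural$ — and interpolates the two cases according to whether $r_0\gtrless \overline w_d$. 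Your sketch does not make this split, and the $\lvv$-bookkeeping you outline (covering by kinetic cylinders of radius $\approx\lvv^{-1}$, losing a fixed power) is roughly correct in spirit but needs the precise $r_0\approx\lvv^{-1}$, $r_1\approx\lvv^{-2}$ scalings of Step~4, together with a final approximation step removing the auxiliary assumption $g\in C^1$, to close.
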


\begin{remark}\label{localremark}
Based on the similar derivation of the estimate~\eqref{bddf}, we also have 
\begin{align*}
\begin{aligned}
\|f\|_{L^\infty(\OO_T\cap B_1(z_0))} 
\lesssim \lvv^{2d}\|f\|_{L_t^\infty L_{x,v}^2\left(\OO_T\cap B_2(z_0)\right)} +\|s\|_{L^\infty(\OO_T\cap B_2(z_0))} &\\
+\|g\|_{L^2(\Ge\cap B_2(z_0),\;\!\dif\mu)} +\|g\|_{L^\infty(\Ge\cap B_2(z_0))}&.  
\end{aligned} 
\end{align*}
\end{remark}

Before starting the proof, let us first set up an appropriate coordinate system, inspired by the one used in \cite{GHJO}. 

\begin{lemma}\label{flattening}
Let $\p\Omega\in C^{1,1}$ and $x_0\in\p\Omega$. There exists some constant $R\in(0,1]$ depending only on $d$ and $\p\Omega$, and some neighborhood $U$ of $x_0$, and some $C^{1,1}$-function $\psi:(-2R,2R)^{d-1} \rightarrow \R$ such that the map $P:(-R,R)^d\rightarrow U$, defined by 
\begin{align}\label{flatten}
\begin{aligned}
P(\check{y},y_d):=\mm(\check{y})+y_d\nn(\check{y},y_d),
\end{aligned} 
\end{align}
is a diffeomorphism from $(-R,R)^d$ to $U$, and from $(-R,R)^{d-1}\times(-R,0)$ to $U\cap\Omega$, where 
\begin{align*}
\check{y}:=(y_1,\ldots,y_{d-1})\in(-R,R)^{d-1},\ \ y_d\in(-R,R), 
\end{align*}
and the maps $\mm:(-R,R)^{d-1}\rightarrow\R^d$ and $\nn:(-R,R)^d\rightarrow \R^d$ are given by 
\begin{align*}
\mm(\check{y})&:=(\check{y},\psi(\check{y}))^T,\\
\nn(\check{y},y_d)&:=|(\psi'(\check{y},y_d),1)|^{-1}(-\psi'(\check{y},y_d),1)^T.
\end{align*} 
Here $\psi':(-R,R)^d\rightarrow\R^d$, which serves as an approximation of $D\psi$, is defined by
\begin{align*}
\psi'(\check{y},y_d):=\left\{ 
\begin{aligned}
\ &|y_d|^{1-d}\int_{\R^{d-1}}\rho(y'/y_d)\,D\psi(\check{y}-y') \dif y' {\quad\rm for\ }y_d\not=0,  \\
\ &D\psi(\check{y}){\quad\rm for\ }y_d=0, \\
\end{aligned}
\right. 
\end{align*} 
where $\rho\in C_c^\infty(\R^{d-1})$ is a fixed nonnegative function such that $\supp\rho\subset B_1\subset\R^{d-1}$ and $\int_{\R^{d-1}}\rho(y')\dif y'=1$. Furthermore, $\nn(\check{y},0)=n_x$ for $x=\mm(\check{y})$ and $P\in C^{1,1}$ in $(-R,R)^d$. 
\end{lemma}

\begin{proof}
In a local coordinate system, we can characterize the boundary portion of $\p\Omega$ near $x_0$ by means of the epigraph of a function $\psi\in C^{1,1}$ defined on $(-2R,2R)^{d-1}$ for some (small) constant $R\in(0,1]$. We now have to check that the map $P$ is well-defined. Denoting by $P'$ the Jacobian matrix of $P$, we know that 
\begin{align}\label{DP}
P'=(D\mm+y_dD_{\check{y}}\nn;\nn+y_dD_{y_d}\nn).
\end{align} 
Since $\psi\in C^{1,1}$, provided that $|y_d|$ is small, the determinant of $P'$ is 
\begin{align}\label{det}
\begin{aligned}
\det(P')
&=|(D\psi,1)|^{-1}\;\! \det\!
\begin{pmatrix}
I_{d-1}  & -(D\psi)^T \\
D\psi & 1
\end{pmatrix}
+O(y_d)\\
&= |(D\psi,1)| + O(y_d). 
\end{aligned}
\end{align}  
Hence, by taking $R$ small enough (depending only on $d$ and $\|\psi\|_{C^{1,1}}$), we have 
\begin{align}\label{detk}
\kappa^{-1}\le \det(P') \le\kappa {\quad\rm in\ } (-R,R)^d,
\end{align} 
for some constant $\kappa>1$ depending only on $d$ and $\|D\psi\|_{L^\infty}$. It follows from the inverse function theorem that the diffeomorphism $P:(-R,R)^d\rightarrow U$, with the neighborhood $U$ of $x_0$, exists as asserted. 

To derive the $C^{1,1}$ estimate of $P$, it suffices to establish the estimate for $y_d\nn$. Indeed, a straightforward calculation using integration by parts shows that the vector $\psi'$, defined through the convolution integral, satisfies 
\begin{align*}
\big\|\big(D_{\check{y}}^2,D_{\check{y}}D_{y_d},D_{y_d}^2\big)(y_d\psi')\big\|_{L^\infty((-R,R)^d)}
\lesssim \|D\psi\|_{C^{0,1}((-2R,2R)^d)}. 
\end{align*} 
This implies the desired $C^{1,1}$ regularity for $P$. 
\end{proof}

We first remark that based on the interior H\"older estimate in Lemma~\ref{interior} and the propagation of H\"older estimate forward in time \cite[Corollary 4.6]{YZ}, it actually suffices to derive the estimate near the phase boundary. Armed with the way of boundary flattening presented in the above lemma, we are able to reduce general boundary problems to a one-dimensional space framework (Step~\hyperref[step1]{1}). After setting up the transformed boundary value problem and using some approximation argument if necessary, we will extend the transformed equation across the singular set $\Gamma_0$ and the portion $\Gamma_+$ where boundary conditions are lost; and this extension is shown to be continuous (Step~\hyperref[step2]{2}). The new problem, with fully prescribed boundary conditions, can be addressed though the analysis of properties for subsolutions with the aid of Lemmas~\ref{bdd}, \ref{measure} (Step~\hyperref[step3]{3}). We then proceed with delicate oscillation estimates, as certain coefficients of the transformed equation tend to be unbounded when the velocity variable goes to infinity (Step~\hyperref[step4]{4}). Finally, we remove the approximation assumption in the concluding step (Step~\hyperref[step5]{5}). 

Let us now turn to the proof in detail. 

\begin{proof}[Proof of Proposition~\ref{local}]
The proof will proceed in five steps. 

\subsubsection*{\textnormal{\textit{Step 1. Localization and boundary flattening}}}\label{step1}\ \\
Let $\Q,\Q_0$ be two open neighborhoods of the point $(x_0,v_0)\in\Gamma$ such that $\overline{\Q}\subset U\times B_1(v_0)$ and $\overline{U}\times\overline{B}_1(v_0)\subset\Q_0$, for $U$ given by Lemma~\ref{flattening}. Take two fixed cut-off functions $\phi\in C^\infty_c(U\times B_1(v_0))$ and $\eta\in C^\infty_c(\Q_0)$ both valued in $[0,1]$ such that $\phi|_{\Q}\equiv1$ and $\eta|_{U\times B_1(v_0)}\equiv 1$. A direct computation yields that the function $F:=f\phi$ satisfies 
\begin{align}\label{FPw}
(\p_t+v\cdot\nabla_x)F = \nabla_v\cdot\left(A\nabla_vF\right) +B\cdot\nabla_v F +cF
+\nabla_v\cdot G_1 + G_0 {\quad\rm in\ } \OO_T, 
\end{align} 
where $G_1,G_0$ are given by 
\begin{align*}
G_1&:=-Af\nabla_v\phi,\\
G_0&:=-(A\nabla_vf+Bf)\cdot\nabla_v\phi +fv\cdot\nabla_x\phi +s\phi. 
\end{align*} 
In particular, $G_1,G_0\in L^2(\OO_T)$ are compactly supported in
\begin{align*}
\U:=[0,T]\times(U\cap\overline{\Omega})\times B_1(v_0),
\end{align*} 
and the localized equation~\eqref{FPw} coincides with the original one~\eqref{FP} in $\U\cap((0,T)\times\Q)$. Applying the local energy estimate given by Lemma~\ref{energy} with $\eta$ picked above, we have 
\begin{align}\label{gg}
\|G_1\|_{L^2(\U)} +\|G_0\|_{L^2(\U)}
\lesssim \lvv^{1/2}\|f\|_{L^2(\U)} +\|s\|_{L^2(\U)}
 +\|g\|_{L^2(\U\cap\Ge,\;\!\dif\mu)}.  
\end{align} 

Let us abbreviate $z=(t,x,v)$ and $\overline{z}=(t,y,w)$. Consider the function $\overline{F}$ with respect to $\overline{z}$ and the transformation $\SS:\U\rightarrow\W:=\SS(\U)$ defined by the prescriptions: 
\begin{align}\label{transformation}
\begin{aligned}
&\overline{F}:=\det\!\left(\frac{\p z}{\p\overline{z}}\right) F\circ\SS^{-1},\\ 
&\SS^{-1}:\,\overline{z}=(t,y,w)\longmapsto  z=(t,x,v):=\left(t,P(y),P'\;\;\!\!\!\!(y)w\right).  
\end{aligned}
\end{align}
The Jacobian matrix $\frac{\p(x,v)}{\p(y,w)}=\left(\begin{smallmatrix}P' & 0\\ D_yv & P'\end{smallmatrix}\right)$, thus $\det\!\left(\frac{\p z}{\p\overline{z}}\right) = (\det(P'))^2$ depends only on the variable $y$ and is nondegenerate in $(-R,R)^d$ due to Lemma~\ref{flattening}. Indeed, it follows from \eqref{detk} that for some universal constant $\kappa>1$, 
\begin{align*}
\kappa^{-2}\le \det\!\left(\frac{\p z}{\p\overline{z}}\right) \le\kappa^2 {\quad\rm for\ any\ } y\in(-R,R)^d. 
\end{align*} 
This shows that $\overline{F}$ is well-defined in $\W$. Moreover, $F$ is supported in $\U$ so that $\overline{F}$ is supported in $\W$. 

Regarding to the boundary condition, it now suffices to consider the data on $\U\cap\Sigma_T^-$. Let $y_d:=y\cdot e_d$ and $w_d:=w\cdot e_d$. Notice that for any $x\in\p\Omega$, the outward normal vector $n_x=\nn(\check{y},0)$. Using \eqref{DP} and the identity $(D\mm)^T\nn=0$, we have 
\begin{align*}
n_x\cdot v=\nn^T(D\mm;\nn)\cdot w=e_d\cdot w=w_d {\quad\rm on\ } \{y_d=0\}, 
\end{align*} 
which also means that 
\begin{align*}
\left\{\pm\, n_x\cdot v<0, \ z\in\U\cap\Sigma_T\right\}
\quad\Longleftrightarrow\quad
\left\{\pm\, w_d<0, \ y_d=0, \ \overline{z}\in\W\right\}. 
\end{align*} 
Therefore, the prescribed boundary value $g$ for $F$ on $\U\cap\Sigma_T^-$ implies the prescribed value $\overline{g}:=\det\!\left(\frac{\p z}{\p\overline{z}}\right)(g\phi)\circ\SS^{-1}$ for $\overline{F}$ on the boundary portion $\{\overline{z}\in\W:y_d=0,w_d<0\}$. 

To derive the equation of $\overline{F}$, we take $\overline{\varphi}\in C_c^1(\W)$ and $\varphi:=\overline{\varphi}\circ\SS$. 
By a change of variables, we have 
\begin{align*}
v\cdot\nabla_x\varphi
&=P'w\cdot\big(P'^{-T}\nabla_y\big)\,\overline{\varphi} +P'w\cdot\left((D_xw)^T\nabla_w\right)\overline{\varphi}\\
&=w\cdot\nabla_y\overline{\varphi} + (D_xw)v\cdot \nabla_w\overline{\varphi}. 
\end{align*} 
It then follows that 
\begin{align*}
\begin{aligned}
\int_{\U\cap(\{t\}\times\OO)} F\varphi -\int_{\U\cap(\{0\}\times\OO)}F\varphi
+\int_{\U\cap\Sigma_T} (n_x\cdot v) F\varphi -\int_{\U} F(\p_t+v\cdot\nabla_x)\varphi  &\\
=\int_{\W\cap\SS(\{t\}\times\OO)} \overline{F}\overline{\varphi}  -\int_{\W\cap\SS(\{0\}\times\OO)}\overline{F}\overline{\varphi} 
+\int_{\W\cap\{y_d=0\}} w_d\overline{F}\overline{\varphi} 
-\int_{\W} \overline{F}(\p_t+w\cdot\nabla_y)\overline{\varphi}  &\\ 
+\int_{\W}\Big[\overline{\varphi}((D_xw)v)\circ\SS^{-1}\! \cdot\nabla_w\overline{F}
+\nabla_w\cdot\big[((D_xw)v)\circ\SS^{-1}\big]\overline{F}\overline{\varphi}\Big]&, 
\end{aligned}
\end{align*}
where we notice by its definition and Lemma~\ref{flattening} that 
\begin{align}\label{BB}
\begin{aligned}
\big\|((D_xw)v)\circ\SS^{-1}\big\|_{L^\infty(\W)}
+\big\|\nabla_w\cdot\big[((D_xw)v)\circ\SS^{-1}\big]\big\|_{L^\infty(\W)}
\lesssim \lvv^2. 
\end{aligned}
\end{align}
In addition, 
\begin{align*}
\int_\U\big(-A\nabla_vF\cdot\nabla_v\varphi +\varphi B\cdot\nabla_vF + cF\varphi
-G_1\cdot\nabla_v\varphi +G_0\varphi\big)&\\
=\int_\W \big[-\big(P'^{-1}A P'^{-T}\big)\circ\SS^{-1} \nabla_w\overline{F}\cdot\nabla_w\overline{\varphi}
+\overline{\varphi}\big(P'^{-1}B\big)\circ\SS^{-1}\!  \cdot\nabla_w \overline{F}&\\
+c\circ \SS^{-1}\overline{F}\overline{\varphi}
-\big(P'^{-1}G_1\big)\circ\SS^{-1}\! \cdot\nabla_w\overline{\varphi} +G_0\circ\SS^{-1}\overline{\varphi} \big]&. 
\end{align*} 
In brief, \eqref{FPw} is equivalent to the following equation 
\begin{align}\label{extendequation}
(\p_t+w\cdot\nabla_y)\overline{F}
=\nabla_w\cdot\big(\overline{A}\nabla_w\overline{F}\big) +\overline{B}\cdot\nabla_w\overline{F} +\overline{c}\overline{F}
+\nabla_w\cdot\overline{G}_1 +\overline{G}_0
{\quad\rm in\ }\W, 
\end{align} 
where the new coefficients are defined in $\W$ by 
\begin{align*}
&\overline{A}:=P'^{-1}(A\circ\SS^{-1})P'^{-T},\\
&\overline{B}:=P'^{-1}(B\circ\SS^{-1})-((D_xw)v)\circ\SS^{-1},\\
&\overline{c}:=c\circ\SS^{-1}\!-\nabla_w\cdot\left[((D_xw)v)\circ\SS^{-1}\right],\\
&\overline{G}_1:=\det\!\left(\frac{\p z}{\p\overline{z}}\right)P'^{-1}G_1\circ\SS^{-1},\\
&\overline{G}_0:=\det\!\left(\frac{\p z}{\p\overline{z}}\right)G_0\circ\SS^{-1}. 
\end{align*} 

\subsubsection*{\textnormal{\textit{Step 2. Extension procedure}}}\label{step2}\ \\
Consider the extended domain 
\begin{align*}
\W^\natural:=\W\cup\left\{\overline{z}\in\SS([0,T]\times U\times B_1(v_0)): w_d\ge0\right\}, 
\end{align*} 
and its effective boundary portion 
\begin{align*}
\p_{\rm eff}\W^\natural
:=\p\W^\natural\cap
\left(\{y_d=0,\,w_d\le0\}\cup\{y_d>0,\,w_d=0\}\cup\{t=0\}\right). 
\end{align*} 
We denote by $\p_y\W^\natural$ and $\p_w\W^\natural$ the boundary portions of $\p\W^\natural$ with respect to $y$ and $w$, respectively. Extend the coefficients $\overline{A},\overline{B},\overline{c},\overline{G}_1,\overline{G}_0$ as $A^\natural,B^\natural,c^\natural,G_1^\natural,G_0^\natural$, respectively, by setting them in $\W^\natural\backslash\W$ as 
\begin{align*}
&A^\natural:=P'^{-1}P'^{-T},\\
&B^\natural:=-((D_xw)v)\circ\SS^{-1},\\
&c^\natural:=-\nabla_w\cdot\left[((D_xw)v)\circ\SS^{-1}\right],\\
&\big|G_1^\natural\big|=G^\natural_0:=0. 
\end{align*} 
Taking note of \eqref{gg} and \eqref{BB}, we see that $G^\natural_1,G^\natural_0\in L^2(\W^\natural)$, and there is some universal constant $K>1$ such that all the eigenvalues of $A^\natural$ lie in $[K^{-1},K]$, and $|B^\natural|, |c^\natural|$ are bounded by $K\lvv^2$. 

As shown in Figure~\ref{img}, this step is devoted to the extension of the solution $\overline{F}$ from the dark region to the checkerboard area, whose boundary value is prescribed as $\overline{g}$ on the black border lines (a section of the effective boundary $\p_{\rm eff}\W^\natural$) and is identically zero near the gray border lines (due to the localization). This procedure relies on the existence result presented in Lemma~\ref{weakexistence} which will not be applied directly, since the result is valid only for some certain regular boundary data. On account of this, we assume $g\in C^1(\overline{\OO_T})$, which will be removed in the final step.  

\begin{figure}
\includegraphics[width=7.8cm]{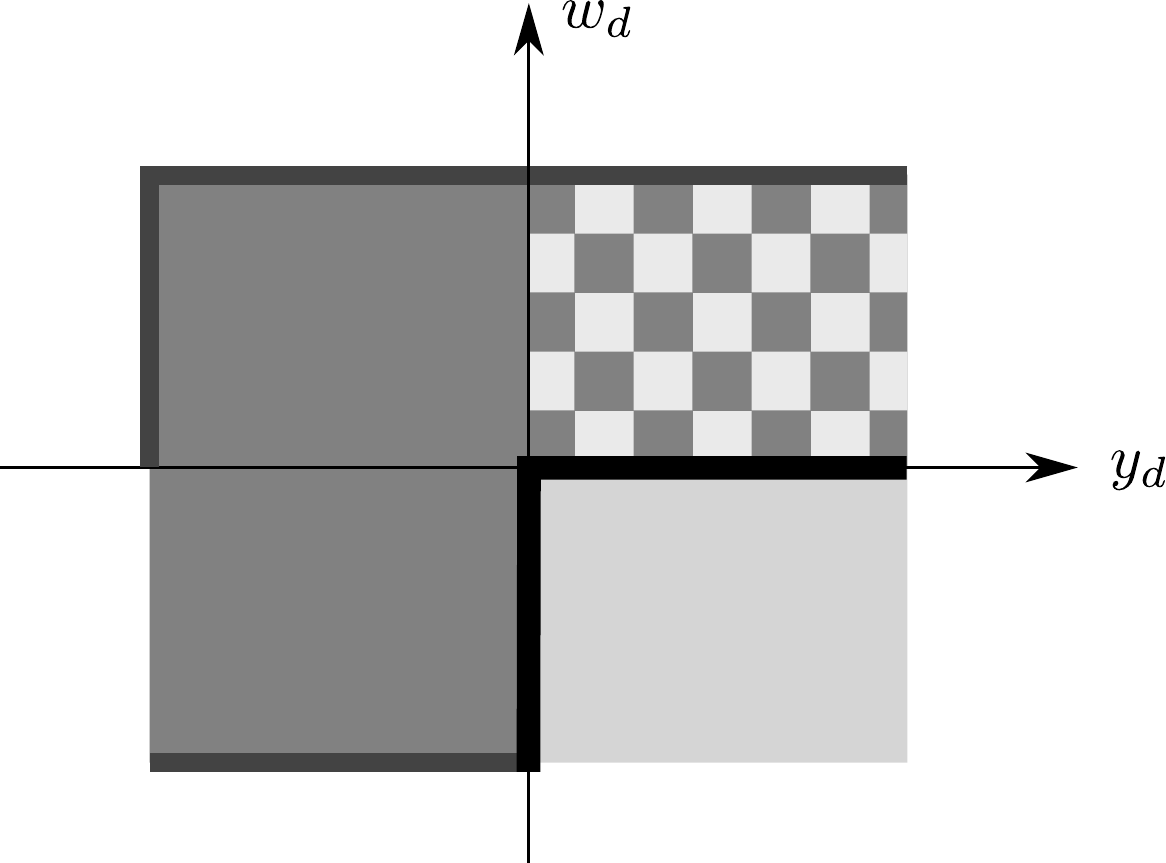}
\caption{The solid dark region is a section of $\W$. 
The corresponding section of $\W^\natural$ consists of regions with dark color and checkerboard pattern.}\label{img}
\end{figure}

Based on the assumption that $g\in C^1(\overline{\OO_T})$, by taking $\check{y}:=(y_1,\ldots,y_{d-1})$ and extending 
\begin{align*}
g^\natural(\overline{z}):=\overline{g}(\overline{z})
=\det\!\left(\frac{\p z}{\p\overline{z}}\right)(g\phi)\circ\SS^{-1}(\overline{z})  {\quad\rm in\ }\{y_d\le 0\},\\
g^\natural(\overline{z}):=\det\!\left(\frac{\p z}{\p\overline{z}}\right)
g\circ\SS^{-1}(t,\check{y},0,w)\,\phi\circ\SS^{-1}(\overline{z})  {\quad\rm in\ } \{y_d>0\}, 
\end{align*} 
the function $\overline{g}$ is extended to a Lipschitz function $g^\natural\in\R^{1+2d}$. 
In view of \eqref{extendequation} and Lemma~\ref{weakexistence}, we get a weak solution $F^\natural\in L^2(\W^\natural)$ by means of solving the problem  
\begin{align*}
(\p_t+w\cdot\nabla_y)F^\natural
=\nabla_w\cdot\big(A^\natural\nabla_wF^\natural\big) +B^\natural\cdot\nabla_wF^\natural +c^\natural F^\natural
+\nabla_w\cdot G^\natural_1 +G^\natural_0
{\quad\rm in\ }\W^\natural
\end{align*}
associated with the boundary condition 
\begin{align}\label{wboundary}
\begin{aligned}
F^\natural=g^\natural {\quad\rm on\ } \p_{\rm eff}\W^\natural,\\
F^\natural=0 {\quad\rm on\ }  \big(\p_y\W^\natural\backslash\{y_d=0\}\big) \cup \big(\p_w\W^\natural\backslash\{w_d=0\}\big). 
\end{aligned}
\end{align} 
Taking the localization $\phi|_\Q\equiv1$ into account, we achieve in $\W^\natural\cap\SS((0,T)\times\Q)$ the equation 
\begin{align}\label{localequation}
\begin{aligned}
(\p_t+w\cdot\nabla_y)F^\natural
=\nabla_w\cdot\big(A^\natural\nabla_wF^\natural\big) +B^\natural\cdot\nabla_wF^\natural
+ c^\natural F^\natural +s^\natural
\end{aligned}
\end{align}
associated with \eqref{wboundary}, where we set the new source term $s^\natural:=\det\!\left(\frac{\p z}{\p\overline{z}}\right)s\circ\SS^{-1}$ in $\W$ which is extended identically zero outside of $\W$. 

\subsubsection*{\textnormal{\textit{Step 3. Local boundedness estimate}}}\label{step3}\ \\
We now observe that the function $F^\natural\circ\SS$ in turn solves the following equation in $\SS^{-1}(\W^\natural)$, 
\begin{align}\label{inturn}
\begin{aligned}
(\p_t+v\cdot\nabla_x)(F^\natural\circ\SS)&\\
=\nabla_v\cdot\big(\underline{A}\nabla_v(F^\natural\circ\SS)\big) + \underline{B}\cdot\nabla_v(F^\natural\circ\SS)
+\underline{c}F^\natural\circ\SS +\nabla_v\cdot\underline{G}_1 +\underline{G}_0&,
\end{aligned}
\end{align}
where $\underline{A}:=A$, $\underline{B}:=B$, $\underline{c}:=c$, $\underline{G}_1:=G_1$, $\underline{G}_0:=G_0$ in $\U$; meanwhile, $\underline{A}:=I_d$, and $\underline{B}$, $\underline{c}$, $\underline{G}_1$, $\underline{G}_0$ are identically zero in $\SS^{-1}(\W^\natural\backslash\W)$. 

By virtue of Lemma~\ref{max} and the estimate~\eqref{gg}, we have
\begin{align}\label{l2}
\begin{aligned}
\|F^\natural\circ\SS\|_{L^2(\SS^{-1}(\W^\natural))}
\lesssim \|\underline{G}_1\|_{L^2(\W)} +\|\underline{G}_0\|_{L^2(\W)} +\|g^\natural\|_{L^\infty(\W\cap\SS(\Ge))} &\\
\lesssim \lvv^{1/2}\|f\|_{L^2(\U)} +\|s\|_{L^2(\U)}
+\|g\|_{L^2(\U\cap\Ge,\;\!\dif\mu)} +\|g\|_{L^\infty(\U\cap\Ge)}&. 
\end{aligned}
\end{align} 
By setting 
\begin{align*}
M&:=\|g^\natural\circ\SS\|_{L^\infty(\U\cap\Ge)},\\
\underline{F}&:=\big(F^\natural\circ\SS-M\big)_+, 
\end{align*}
the function $\underline{F}$ vanishes on the boundary portion $\SS^{-1}(\p_{\rm eff}\W^\natural)$. As a consequence of the zero extension for $\underline{F}$ to the region $((0,T)\times U\times B_1(v_0))\backslash\SS^{-1}(\W^\natural)$, and the localization property from $\phi$, the function $\underline{F}$ becomes a subsolution verifying 
\begin{align*}
(\p_t+v\cdot\nabla_x)\underline{F}
\le \nabla_v\cdot\big(\underline{A}\nabla_v\underline{F}\big) + \underline{B}\cdot\nabla_v\underline{F} +|\underline{c}|\underline{F} +|\underline{c}|M +\underline{s}
{\quad\rm in\ } (-1,T)\times\Q. 
\end{align*}
In this manner, every boundary point reduces to the interior one. Let us take $z_0\in\Sigma_T\cup(\{0\}\times\OO)$, and pick the constant $R_0\in(0,1]$ such that $R_0\approx\lvv^{-1}$ and $Q_{2R_0}(z_0)\subset(-1,T]\times\Q$. Applying Lemma~\ref{bdd}, along with \eqref{l2}, we derive the boundedness of $F^\natural\circ\SS$ from above that 
\begin{align*}
\begin{aligned}
\sup\nolimits_{Q_{R_0}(z_0)}F^\natural\circ\SS
&\le \sup\nolimits_{Q_{R_0}(z_0)}\underline{F} + M\\
&\lesssim \|F^\natural\circ\SS\|_{L^2(\SS^{-1}(\W^\natural))} +\|s\|_{L^\infty(\U)}+  M \\
&\lesssim \lvv^{1/2}\|f\|_{L^2(\U)} +\|s\|_{L^\infty(\U)} 
+\|g\|_{L^2(\U\cap\Ge,\;\!\dif\mu)} +M.  
\end{aligned}  
\end{align*}  
Similarly as regards the zero extension for the function $\big(\!-F^\natural\circ\SS-M\big)_+$, we have the boundedness of $F^\natural\circ\SS$ from below. Hence, for any $z_0\in\Sigma_T\cup(\{0\}\times\OO)$, 
\begin{align}\label{bddF}
\begin{aligned}
\|f\|_{L^\infty(\U\cap Q_{R_0}(z_0))} 
&\le \|F^\natural\circ\SS\|_{L^\infty(Q_{R_0}(z_0))}\\
&\lesssim \lvv^{1/2}\|f\|_{L^2(\U)} +\|s\|_{L^\infty(\U)} 
+\|g\|_{L^2(\U\cap\Ge,\;\!\dif\mu)} +\|g\|_{L^\infty(\U\cap\Ge)}, 
\end{aligned} 
\end{align}
where we use the fact that $F^\natural\circ\SS$ coincides with the original solution $f$ in $\U$ owing to the uniqueness result given in Corollary~\ref{unique}. 

Recalling that $R_0\approx\lvv^{-1}$ and combining \eqref{bddF} with Lemma~\ref{bdd} applied to $f$ in the interior region $\{z\in\OO_T: \dist(z,\Sigma_T\cup(\{0\}\times\OO))\ge R_0\}$, we obtain the estimate~\eqref{bddf} as claimed. 

\subsubsection*{\textnormal{\textit{Step 4. Zooming in and oscillation decay}}} \label{step4}\ \\
Let us set $\overline{z}_0:=\SS(z_0)=\big(t_0,P^{-1}(x_0),P'^{-1}(x_0)v_0\big)$ for $z_0\in\Sigma_T$, and pick the constant $r_0\in(0,1]$ such that $r_0\approx\lvv^{-2}$ and $Q_{r_0}(\overline{z}_0)\subset\SS(Q_{R_0}(z_0))$, where we recall that $Q_{2R_0}(z_0)\subset(-1,T]\times\Q$. Taking $\overline{z}:=\T_{\overline{z}_0,r}(\tilde{z})$ with
$\tilde{z}:=(\tilde{t},\tilde{y},\tilde{w})\in Q_1$ and fixed $r\in(0,r_0]$, and regarding to \eqref{localequation}, we deduce that the equation
\begin{align*}
\left(\p_{\tilde{t}}+\tilde{w}\cdot\nabla_{\tilde{y}}\right)\widetilde{F}
=\nabla_{\tilde{w}}\cdot\big(\widetilde{A}\nabla_{\tilde{w}}\widetilde{F}\big) +\widetilde{B}\cdot\nabla_{\tilde{w}}\widetilde{F} 
+\tilde{c}\widetilde{F} +\tilde{s}
\end{align*} 
holds in the defective region $Q_1\backslash\{\tilde{z}:\,y_d>0,\,w_d<0,\,{\rm or\;}t<0\}$, where we defined 
\begin{align*}
\widetilde{F}&:=F^\natural\circ\T_{\overline{z}_0,r},\\
\widetilde{A}&:=A^\natural\circ\T_{\overline{z}_0,r},\\
\widetilde{B}&:=rB^\natural\circ\T_{\overline{z}_0,r},\\
\tilde{c}&:=r^2c^\natural\circ\T_{\overline{z}_0,r},\\
\tilde{s}&:=r^2s^\natural\circ\T_{\overline{z}_0,r}.
\end{align*} 
Due to the choice of $r_0$, the functions $|\widetilde{B}|$, $|\tilde{c}|$, $|\tilde{s}|$ are bounded by a universal constant. 

For $r\in(0,r_0]$, we define 
\begin{align*}
M_r:=\sup\nolimits_{\{\tilde{z}\in Q_1:\,\T_{\overline{z}_0,r}(\tilde{z})\in\SS(\Ge)\}}\widetilde{F}.
\end{align*} 
After extending the function $\big(\widetilde{F}-M_r\big)_+$ by zero to the region $\{\tilde{z}\in Q_1:y_d>0,\,w_d<0,\,{\rm or\;}t<0\}$, and normalizing it through 
\begin{align*}
\tilde{s}':=\tilde{c}\widetilde{F}& +\tilde{s},\\
F_r:=\big(\sup\nolimits_{Q_1}\big(\widetilde{F}-M_r\big)_+ &+\lambda^{-1}\|\tilde{s}'\|_{L^\infty(Q_1)}\big)^{\!-1}\big(\widetilde{F}-M_r\big)_+,
\end{align*} 
it turns out that the function $F_r$ is valued in $[0,1]$ over $Q_1$ and satisfies
\begin{align}\label{rescale}
\left(\p_{\tilde{t}}+\tilde{w}\cdot\nabla_{\tilde{y}}\right)F_r
\le\nabla_{\tilde{w}}\cdot\big(\widetilde{A}\nabla_{\tilde{w}}F_r\big) +\widetilde{B}\cdot\nabla_{\tilde{w}}F_r +\lambda
{\quad\rm in\ } Q_1, 
\end{align}  
where the universal constant $\lambda\in(0,1)$ is provided in Lemma~\ref{measure}. 

For $(x_0,v_0)\in\Gamma_0\cup\Gamma_-$, that is, $P^{-1}(x_0)\cdot e_d=0$ and $P'^{-1}(x_0)v_0\cdot e_d\le0$, we have 
\begin{align*}
y_d= r^3\tilde{y}\cdot e_d+ r^2\tilde{t}P'^{-1}(x_0)v_0\cdot e_d\ge0, 
\end{align*} 
whenever $\tilde{y}\cdot e_d\ge 0$ and $\tilde{t}\le0$. According to the definition of $F_r$, for any $\sigma,\tau\in(0,1)$ such that $Q_\sigma(-\tau,0,0)\subset Q_1$, we derive 
\begin{align*}
\left|\{F_r=0\}\cap Q_\sigma(-\tau,0,0)\right|\ge \frac{1}{4} |Q_\sigma(-\tau,0,0)|. 
\end{align*} 
Intuitively, $F_r$ is extended as a subsolution to the light gray area, and thus vanishes at least a quarter of $Q_1$; see Figure~\ref{img}. Applying Lemma~\ref{measure} with $\omega=1/4$ to the subsolution $F_r$ of \eqref{rescale} then yields that there exist some constants $\theta,\varrho\in(0,1)$ such that $F_r\le 1-\theta$ in $Q_\varrho$, which is recast as the decrease estimate of supremum that
\begin{align*}
\widetilde{F}-M_r\le (1-\theta)\sup\nolimits_{Q_1}\widetilde{F} -(1-\theta)\, M_r +\lambda^{-1}\|\tilde{s}'\|_{L^\infty(Q_1)}
{\quad\rm in\ }Q_\varrho. 
\end{align*}
Similarly as regards the setting $\big(m_r-\widetilde{F}\big)_+$ with $m_r:=\inf\nolimits_{\{\tilde{z}\in Q_1:\,\T_{\overline{z}_0,r}(\tilde{z})\in\SS(\Ge)\}}\widetilde{F}$, we have the increase estimate of infimum that
\begin{align*}
m_r-\widetilde{F}\le -(1-\theta)\inf\nolimits_{Q_1}\widetilde{F} +(1-\theta)\, m_r +\lambda^{-1}\|\tilde{s}'\|_{L^\infty(Q_1)}
{\quad\rm in\ } Q_\varrho. 
\end{align*}
Adding them together, we obtain the oscillation decay  
\begin{align*}
\begin{aligned}
\osc_{Q_\varrho}\widetilde{F}
&\le (1-\theta)\,\osc_{Q_1}\widetilde{F} +\theta\,(M_r-m_r) +2\lambda^{-1}\|\tilde{s}'\|_{L^\infty(Q_1)}\\
&\le (1-\theta)\,\osc_{Q_1}\widetilde{F} +\theta\,\osc_{\{\tilde{z}\in Q_1:\,\T_{\overline{z}_0,r}(\tilde{z})\in\SS(\Ge)\}}\widetilde{F}
+2\lambda^{-1}\|\tilde{s}'\|_{L^\infty(Q_1)}. 
\end{aligned}
\end{align*}
Rescaling back, it reads for any $r\in(0,r_0]$, 
\begin{align*}
\begin{aligned}
\osc_{Q_{\varrho r}(\overline{z}_0)}F^\natural
\le(1-\theta)\,\osc_{Q_r(\overline{z}_0)}F^\natural
+\theta\,\osc_{Q_r(\overline{z}_0)\cap\SS(\Ge)}g^\natural& \\
+2\lambda^{-1}r^2\big(\Lambda\|F^\natural\|_{L^\infty(Q_r(\overline{z}_0))} 
+\|s^\natural\|_{L^\infty(Q_r(\overline{z}_0))}\big)&.
\end{aligned}
\end{align*}
According to the standard iterative procedure (see for instance \cite[Section~8.10]{GT}), there is some universal constant $\alpha\in(0,1)$ such that for any $r\in(0,r_0]$, 
\begin{align}\label{osc-}
\begin{aligned}
\osc_{Q_{r}(\overline{z}_0)}F^\natural
&\lesssim  r_0^{-\alpha} r^{\alpha} \|F^\natural\|_{L^\infty(Q_{r_0}(\overline{z}_0))}
+r^{\alpha}\|s^\natural\|_{L^\infty(Q_{r_0}(\overline{z}_0))}
+\osc_{Q_{\sqrt{r_0r}}(\overline{z}_0)\cap\SS(\Ge)}\overline{g}\\
&\lesssim  r_0^{-\alpha} r^{\alpha} \|F^\natural\|_{L^\infty(Q_{r_0}(\overline{z}_0))}
+r^{\alpha}\|s\|_{L^\infty(\U)}
+r^{\beta/6}[\overline{g}]_{C^\beta(\W\cap\SS(\Ge))}. 
\end{aligned}
\end{align}

For $(x_0,v_0)\in\Gamma_+$, that is, $P^{-1}(x_0)\cdot e_d=0$ and $\overline{w}_d:=P'^{-1}(x_0)v_0\cdot e_d>0$, these boundary points for the solution $F^\natural$ of \eqref{localequation} reduce to the interior ones directly, for the reason that the interior estimate given by Lemma~\ref{interior} is applicable for solutions to \eqref{localequation} in the region $\W^\natural\cap\SS((0,T)\times\Q)$. More precisely, we have, for any $z_0\in\Sigma_T^+$ and $r\in(0,r_0/2]$, 
\begin{align}\label{osc+}
\osc_{Q_{r}(\overline{z}_0)}F^\natural
\lesssim \max\!\big\{r_0^{-\alpha},\overline{w}_d^{-\alpha}\big\}\;\! r^{\alpha} \|F^\natural\|_{L^\infty(Q_{r_0}(\overline{z}_0))}
+r^{\alpha}\|s\|_{L^\infty(\U)}. 
\end{align}
Applying \eqref{osc-} and \eqref{osc+} in the cases $r_0>\overline{w}_d$ and $r_0\le\overline{w}_d$, respectively, we see that the H\"older estimate \eqref{osc-} holds for any $\overline{z}_0=\SS(z_0)$ with $z_0\in\Sigma_T^+$. We remark that the estimate around the initial point $z_0\in\{0\}\times\OO$ also holds through the same zero extension argument; see \cite[Corollary 4.6]{YZ}. 

We now translate the H\"older estimates for $F^\natural$ into the ones for $f$. Let us abbreviate $B^-_r(z_0):=(t_0-r,t_0]\times B_r(x_0)\times B_r(v_0)$. Since the transformation $\SS$ and its inverse are bounded, for any $r\in(0,r_0]$, we have $B^-_{\nu r^3}(z_0)\subset Q_r(z_0)\subset B^-_{\nu^{-1} r}(z_0)$, for some universal constant $\nu\in(0,1)$. Recalling the definitions of $F^\natural$ and $\overline{F}$ with the fact \eqref{det}, we conclude that there is some constant $r_1\in(0,1]$ such that $r_1\approx r_0\approx\lvv^{-2}$, and for any $z_0\in\Sigma_T\cup(\{0\}\times\OO)$ and $r\in(0,r_1]$, 
\begin{align*}
\begin{aligned}
\osc_{\U\cap B^-_{r^3}(z_0)}f
\lesssim r_1^{-\alpha} r^\alpha\|F^\natural\circ\SS\|_{L^\infty(Q_{R_0}(z_0))} 
 +r^{\alpha}\|s\|_{L^\infty(\U)} +r^{\beta/6}[g]_{C^\beta(\U\cap\Ge)}. 
\end{aligned}
\end{align*}
Gathering this with \eqref{bddF} and picking the constant $\alpha\in(0,\beta/6]$ yield that 
\begin{align}\label{oscb}
\begin{aligned}
r^{-\alpha}\osc_{\U\cap B^-_{r^3}(z_0)}f
\lesssim \lvv^{1/2}r_1^{-\alpha} \|f\|_{L^\infty(\U)} +r_1^{-\alpha}\|s\|_{L^\infty(\U)}
+ r^{\beta/6-\alpha}[g]_{C^\beta(\U\cap\Ge)}. 
\end{aligned}
\end{align}
Together with the interior H\"older estimate in Lemma~\ref{interior}, we know that the weak solution $f$ is H\"older continuous in $\overline{\OO_T}$. Indeed, for any $z_0\in\{z\in\OO_T: \dist(z,\Sigma_T\cup(\{0\}\times\OO))\ge r_1\}$, 
\begin{align}\label{oscin}
r_1^{\alpha}[f]_{C^\alpha(B^-_{\nu r_1}(z_0))}
\lesssim \|f\|_{L^\infty(B^-_{r_1}(z_0))} +\|s\|_{L^\infty(B^-_{r_1}(z_0))}, 
\end{align} 
where we used the same notation of the (universal) constants $\alpha,\nu\in(0,1)$. Now that $r_1\approx\lvv^{-2}$, we conclude from the above two estimates that for any $z_0\in\overline{\OO_T}$ and $r\in(0,1]$,  
\begin{align*}
\begin{aligned}
\ [f]_{C^{\alpha/3}({\OO_T}\cap B^-_{1}(z_0))}
\lesssim \lvv^{1/2+2\alpha}\|f\|_{L^\infty({\OO_T}\cap B^-_{2}(z_0))} &\\
+\lvv^{2\alpha}\|s\|_{L^\infty({\OO_T}\cap B^-_{2}(z_0))}
+\lvv^{2\alpha-\beta/3} [g]_{C^\beta(\Ge\cap B^-_{2}(z_0))}&, 
\end{aligned}
\end{align*} 
which implies \eqref{holderf}. 

\subsubsection*{\textnormal{\textit{Step 5. Approximation}}}\label{step5}\ \\
We have to remove the additional assumptions used in the previous steps that the boundary data $g$ is continuously differentiable. To this end, we approximate $g$ by a sequence of smooth functions $\{g_j\}_{j\in\N}$, which preserves the same regularity as $g$ on $\Ge$. For each $j\in\N$, we acquire a continuous weak solution $f_j$ to \eqref{FP}. It follows that \eqref{oscb} and \eqref{oscin} hold for $f_j$, whose right hand sides are bounded independently of $j$. With the aid of the maximum principle given by Lemma~\ref{max}, we have 
\begin{align*}
\|f_i-f_j\|_{L^\infty(\OO_T)}
\lesssim \|g_i-g_j\|_{L^\infty(\Ge)} \rightarrow 0
{\quad\rm as\ }i,j\rightarrow\infty. 
\end{align*}
After passing to a subsequence, the passage $j\rightarrow\infty$ yields a bounded limiting function $f_\infty$ of $f_j$.  By the same argument as in the proof of Corollary~\ref{globalin}, we know that $f_\infty=f$ is the unique weak solution to \eqref{FP} in $\OO_T$ associated with $f=g$ on $\Ge$. In particular, when $g$ is continuous on $\Ge$, $f$ is globally continuous over $\overline{\OO_T}$. The proof is now complete. 
\end{proof}

\subsection{Global estimates}
\begin{lemma}\label{poly}
Let the constants $p\ge1$, $q\ge0$, and the function $f$ be a bounded weak solution to \eqref{FP} in $\OO_T$ with $\lv^qs\in L^2(\OO_T)$ and $\lv^qf|_{\Ge}\in L^2(\Ge,\dif\mu)$. Then, 
\begin{align*}
\begin{aligned}
\|\lv^qf^p\|_{C^0([0,T];L^2(\OO))} +\|\lv^q\nabla_v(f^p)\|_{L^2(\OO_T)}  +\|\lv^q f^p\|_{L^2(\p\OO_T,\;\!\dif\mu)}&\\
\lesssim \|\lv^qs^p\|_{L^2(\OO_T)} +\|\lv^qf^p\|_{L^2(\Ge,\;\!\dif\mu)}&. 
\end{aligned}
\end{align*} 
\end{lemma}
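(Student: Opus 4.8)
The plan is to prove the weighted energy estimate by testing the renormalization formula from Lemma~\ref{trace} against a velocity weight, then applying a Grönwall-type argument. First I would reduce to the case $c\le 0$ by the standard substitution $f\mapsto e^{\Lambda t}f$, which changes the zeroth-order coefficient without affecting the structure or the weights, so that the term $cf\chi'(f)$ becomes favorably signed. Then I would take $\chi(\iota)=\iota^{2p}$ (or rather a truncation $\chi_N$ that agrees with $\iota^{2p}$ for $|\iota|\le N$ and is extended linearly with bounded second derivative beyond, so that $\chi(\iota)=O(\iota^2)$ holds and one recovers the claim by sending $N\to\infty$ using boundedness of $f$) and the test function $\vp=\lv^{2q}$ in \eqref{ren}. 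Note $\nabla_v\lv^{2q}=2q\lv^{2q-2}v$ and $(\p_t+v\cdot\nabla_x)\lv^{2q}=0$ since the weight is velocity-only and $x$-independent, so the transport term on the right-hand side drops out entirely — this is the reason the estimate has no $\|f\|_{L^2}$ term on the right.

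The key algebraic point is the identity $\nabla_v\chi(f)=\chi'(f)\nabla_vf$ and, writing $u:=f^p$ so that (for $\chi(\iota)=\iota^{2p}$) $\chi(f)=u^2$, $\chi'(f)\nabla_vf = \tfrac{2}{1}\,p^{-1}\cdot p\, u\,\nabla_v f/f\cdot\ldots$; more cleanly, $\nabla_v(f^{2p}) = 2 f^p\,\nabla_v(f^p)$ and $\chi''(f)|\nabla_vf|^2 = \tfrac{2p-1}{p}|\nabla_v(f^p)|^2\cdot$(a positive constant), so the $-\vp\chi''(f)A\nabla_vf\cdot\nabla_vf$ term is a good (coercive) term controlling $\int \lv^{2q}|\nabla_v(f^p)|^2$ from below, using ellipticity of $A$ from \eqref{H}. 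The remaining terms are handled as follows: the boundary term $\int_{\Si_t}(n_x\cdot v)\chi(\g f)\lv^{2q}$ is split into its $\Gamma_+$ and $\Gamma_-$ parts; the $\Gamma_+$ part has the good sign and is absorbed into the left-hand side as $\|\lv^q f^p\|_{L^2(\p\OO_T,\dif\m)}^2$, while the $\Gamma_-$ part is bounded by $\|\lv^q f^p\|_{L^2(\Ge,\dif\m)}^2$; the drift term $\vp B\cdot\nabla_v\chi(f)$ and the weight-gradient cross term $-A\nabla_v\chi(f)\cdot\nabla_v\lv^{2q}$ are both of the form (bounded coefficient, possibly times $\lv^{-1}$) $\times \lv^{q}f^p\times\lv^q\nabla_v(f^p)$, so Cauchy--Schwarz against the coercive term absorbs the gradient factor at the cost of $\int_{\OO_t}\lv^{2q}f^{2p}$; the source term $\chi'(f)s\lv^{2q}$ is comparable to $\lv^q f^p\cdot\lv^q s^p$ (using $|\chi'(f)s|=2p|f|^{2p-1}|s|\lesssim f^{2p}+|s|^{2p}$ pointwise via Young) and gives $\int_{\OO_t}\lv^{2q}f^{2p}+\|\lv^q s^p\|_{L^2(\OO_t)}^2$. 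Collecting everything yields
\[
\sup_{t}\int_{\{t\}\times\OO}\lv^{2q}f^{2p} + \int_{\p\OO_T}\lv^{2q}f^{2p}\dif\m + \int_{\OO_T}\lv^{2q}|\nabla_v(f^p)|^2 \lesssim \int_{\OO_T}\lv^{2q}f^{2p} + \|\lv^q s^p\|_{L^2(\OO_T)}^2 + \|\lv^q f^p\|_{L^2(\Ge,\dif\m)}^2,
\]
and Grönwall's inequality in $t$ eliminates the first term on the right (here the reduction to $c\le 0$ is what makes the Grönwall constant harmless, though in fact even without it one just gets a $T,\Lambda$-dependent constant).

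The main obstacle I anticipate is a technical regularization issue rather than a conceptual one: the power nonlinearity $\chi(\iota)=\iota^{2p}$ with $p\ge 1$ does not satisfy $\chi(\iota)=O(\iota^2)$ as $|\iota|\to\infty$ unless $p=1$, so Lemma~\ref{trace} does not apply verbatim. The fix is to work with the truncated renormalizer $\chi_N$ as above, for which all identities hold with $\chi_N''$ bounded and supported in $\{|\iota|\le N\}$, derive the estimate with constants uniform in $N$ (the boundedness of $f$ — which is part of the hypothesis — ensures $\chi_N(f)\to f^{2p}$, $\chi_N'(f)\to 2p f^{2p-1}$, etc. pointwise and dominatedly), and pass $N\to\infty$; one also needs to justify that $\nabla_v(f^p)\in L^2$ a posteriori, which follows from the same estimate applied to $\chi_N$ together with Fatou/monotone convergence. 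A secondary minor point is checking that $\chi_N'(f)$ vanishes appropriately or that the domain $\OO=\O\times\R^d$ has $\p_v\D=\emptyset$, so the constraint ``$\chi'(f)=0$ on $[0,T]\times\O\times\p\V$'' in Lemma~\ref{trace} is vacuous here — which it is, simplifying matters. With these caveats handled, the computation is the routine weighted parabolic energy estimate.
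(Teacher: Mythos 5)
Your proof is correct and follows essentially the same route as the paper's: a weighted $L^{2p}$ energy estimate obtained from the renormalization formula with $\chi(\iota)=\iota^{2p}$ and a velocity weight, followed by Cauchy--Schwarz and Gr\"onwall. The only cosmetic difference is that the paper sets $F:=f\lv^q$ and rewrites the equation for $F$ with modified bounded coefficients $B',c'$ before testing with $\vp=1$, whereas you keep the unknown $f$ and move the weight into the test function $\vp=\lv^{2q}$; you also correctly flag the truncation of $\chi$ needed because $\iota^{2p}\neq O(\iota^2)$ for $p>1$, a point the paper does not spell out.
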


\begin{proof}
It is straightforward to check that $F:=f\phi$ and $S:=s\phi$ with $\phi=\phi(v)$ verifies  
\begin{align*}
(\p_t+v\cdot\nabla_x)F = \nabla_v\cdot(A\nabla_vF) +B\cdot\nabla_v F +cF +\nabla_v\cdot G_1 + G_0 
{\quad\rm in\ }\OO_T, 
\end{align*} 
provided that $f$ is a weak solution to \eqref{FP} in $\OO_T$, where $G_1$ and $G_0$ are given by 
\begin{align*}
G_1&:= -Af\nabla_v\phi,\\
G_0&:= -(A\nabla_vf+Bf)\cdot\nabla_v\phi +S. 
\end{align*} 
Let $\phi:=\lv^q$. By noticing that $\nabla_v\phi=\frac{qv}{\lv^2}\phi$, we acquire 
\begin{align*}
&f\nabla_v\phi=\frac{qv}{\lv^2}F,\\
&\frac{qv}{\lv^2}\otimes\nabla_vF=\nabla_v\phi\otimes\nabla_vf+\frac{qv}{\lv^2}\otimes \frac{qv}{\lv^2}F, 
\end{align*}
so that $G_1$ and $G_0$ are recast as 
\begin{align*}
G_1&=-\frac{qAv}{\lv^2}F,\\
G_0&= -\frac{qAv}{\lv^2}\cdot\nabla_vF +\frac{q^2Av\cdot v}{\lv^4}F 
-\frac{qB\cdot v}{\lv^2}F +S. 
\end{align*} 
According to the uniqueness of weak solutions, it is equivalent to consider the equation 
\begin{align}\label{FPq}
(\p_t+v\cdot\nabla_x)F = \nabla_v\cdot(A\nabla_vF+B'F) +(B+B')\cdot\nabla_v F +c'F +S {\quad\rm in\ }\OO_T, 
\end{align} 
where the new coefficients $B'$ and $c'$, defined by 
\begin{align*}
B'&:=-\frac{qAv}{\lv^2},\\
c'&:=c+\frac{q^2Av\cdot v}{\lv^4} -\frac{qB\cdot v}{\lv^2}, 
\end{align*} 
are bounded by a universal constant. By a similar version of Corollary~\ref{globalin}, we have 
\begin{align}\label{FPp}
\begin{aligned}
\|F^p\|_{C^0([0,T];L^2(\{t\}\times\OO))} +\|\nabla_v(F^p)\|_{L^2(\OO_T)} +\|F^p\|_{L^{2}(\p\OO_T,\;\!\dif\mu)} &\\
\lesssim \|S^p\|_{L^{2}(\OO_T)} +\|F^p\|_{L^{2}(\Ge,\;\!\dif\mu)} &,
\end{aligned}
\end{align} 
Indeed, taking $\chi(\iota)=\iota^{2p}$ and $\varphi=1$ in the renormalization formula for \eqref{FPq} (see an analogue in Lemma~\ref{trace}) yields that 
\begin{align*}
\int_{\{t\}\times\OO}F^{2p} -\int_{\{0\}\times\OO}F^{2p} +\int_{\Sigma_t}n_x\cdot vF^{2p}
+2p(2p-1)\int_{\OO_t}AF^{2p-2}\nabla_vF\cdot\nabla_vF &\\
= 2p\int_{\OO_t}\left[(B+2B'-2pB')F^{2p-1}\nabla_vF +c'F^{2p} +SF^{2p-1}\right] &. 
\end{align*} 
The claim then follows from the Cauchy–Schwarz inequality and Gr\"onwall's inequality. Replacing $q$ by $q/p$ in \eqref{FPp} implies the desired result. 
\end{proof}

The following global estimates are direct consequences of Proposition~\ref{local}, with Remark~\ref{localremark}, and Lemma~\ref{poly}. 

\begin{proposition}\label{global}
Let the constants $q\ge 2d$, $\beta\in(0,1]$, and the functions $s$ and $g$ satisfy $\lv^qs\in L^2(\OO_T)$, $\lv^{q-2d}s\in L^\infty(\OO_T)$, $\lv^qg\in L^2(\Ge,\dif\mu)$, $\lv^{q-2d}g\in L^\infty(\Ge)$. Then, there exists a unique weak solution $f$ to \eqref{FP} in $\OO_T$ associated with $f=g$ on $\Ge$, which satisfies 
\begin{align*}
\begin{aligned}
\big\|\lv^{q-2d}f\big\|_{L^\infty(\OO_T)}  
\lesssim \|\lv^{q}s\|_{L^2(\OO_T)} + \big\|\lv^{q-2d}s\big\|_{L^\infty(\OO_T)}&\\ 
+ \|\lv^{q}g\|_{L^2(\Ge,\;\!\dif\mu)} + \big\|\lv^{q-2d}g\big\|_{L^\infty(\Ge)}&.
\end{aligned} 
\end{align*}
If additionally $q>1/2+2d$ and $g\in C^\beta(\Ge)$, then there is some universal constant $\alpha\in(0,1)$ such that 
\begin{align*}
\begin{aligned}
\big\|\lv^{q-2d}f\big\|_{L^\infty(\OO_T)} +[f]_{C^\alpha(\OO_T)}
\lesssim \|\lv^{q}s\|_{L^2(\OO_T)} + \big\|\lv^{q-2d}s\big\|_{L^\infty(\OO_T)} &\\
+\|\lv^{q}g\|_{L^2(\Ge,\;\!\dif\mu)}  + \big\|\lv^{q-2d}g\big\|_{L^\infty(\Ge)}
+ [g]_{C^\beta(\Ge)}&. 
\end{aligned}
\end{align*} 
\end{proposition}

\section{Nonlocal reflection boundary problems}\label{diffuse}
This section is devoted to the regularity for solutions to nonlocal reflection boundary problems of \eqref{FP}. Let us recall the reflection operator 
\begin{align*}
\NN f(t,x,v):=\M(t,x,v)\int_{\R^d}f(t,x,v')\;\!(n_x\cdot v')_+\dif v'{\quad\rm in\ }\Sigma_T^-,
\end{align*} 
for $\M$ satisfying \eqref{M}. The proof is patterned after the argument from the previous section. We always assume that $\Omega$ is a bounded $C^{1,1}$-domain in $\R^d$. 

\subsection{A priori estimates}
Due to the same regularization procedure as the one used to deal with the inflow injection case, it is actually sufficient to prove the following lemma about the H\"older regularity of the macroscopic boundary quantity. 

We notice that the solution to \eqref{FP} with reflection boundary conditions lacks prescribed boundary data. We can first only get regularity around the outgoing boundary $\Gamma_+$, which exhibits singular estimates up to the grazing set $\Gamma_0$; for instance, the right hand side of \eqref{wbd} below will approach infinity as $r_0\rightarrow 0$. Fortunately, the weight $n_x\cdot v$ appearing in the nonlocal reflection operator $\NN$ will contribute to reconciling these estimates. This observation leads to the following lemma, which shows the regularity of $\NN f$ for $f$ solving \eqref{FP}. 

\begin{lemma}\label{diffusedata}
Let the constants $p=2+4d$, $q>1+d$, and the functions $s,\fin$ satisfy $\lv^qs\in L^\infty(\OO_T)$, $\lv^q\fin\in L^\infty(\OO)$. Then, for any weak solution $f$ to \eqref{FP} in $\OO_T$ such that $f|_{t=0}=\fin$ in $\OO$ and $\lv^{q+1/2} f\in L^p(\OO_T)$, the quantity $\Upsilon=\Upsilon[f]$, defined by 
\begin{align*}
\Upsilon[f](t,x):=\int_{\R^d}f(t,x,v)\;\!(n_x\cdot v)_+\dif v, 
\quad (t,x)\in [0,T]\times\p\Omega,
\end{align*}
satisfies that for any $\tau\in(0,T]$ with $\I_t^\tau:=[\max\{0,\tau-t\},\tau]$, we have 
\begin{align}\label{databdd}
\begin{aligned}
\|\Upsilon\|_{L^\infty(\I_1^\tau\times\p\Omega)}
\lesssim \big\|\lv^{q+1} f\big\|_{L^p(\I_2^\tau\times\OO)}
+ \|\lv^qs\|_{L^\infty(\I_2^\tau\times\OO)}&\\
+ \|\lv^qf\|_{L^\infty(\{t=0\}\cap(\I_2^\tau\times\OO))}&; 
\end{aligned}
\end{align} 
if additionally $\fin\in C^\beta(\OO)$, then there is some universal constant $\alpha\in(0,1)$ such that 
\begin{align}\label{dataholder}
\begin{aligned}
\ [\Upsilon]_{C^\alpha(\I_1^\tau\times\p\Omega)}
\lesssim \|\lv^{q} f\|_{L^\infty(\I_2^\tau\times\OO)}
+ \|\lv^qs\|_{L^\infty(\I_2^\tau\times\OO)} 
+[f]_{C^\beta(\{t=0\}\cap(\I_2^\tau\times\OO))}&. 
\end{aligned}
\end{align} 
\end{lemma}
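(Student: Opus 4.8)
The plan is to bound and control the Hölder regularity of the velocity average $\Upsilon[f](t,x)=\int_{\R^d}f(t,x,v)(n_x\cdot v)_+\dif v$ by combining the pointwise (weighted $L^\infty$) control on $f$ coming from Proposition~\ref{local} with the smoothness of the geometry of $\p\O$. First I would freeze a boundary point $x_0\in\p\O$ and use Lemma~\ref{flattening} to flatten the boundary near $x_0$, reducing to a graph $\{y_d=0\}$; in these coordinates $n_x\cdot v$ becomes the linear functional $w_d$ and $n_x$ depends on $\check y$ in a $C^{1,1}$ fashion. The point of this reduction is that $\Upsilon$ is then an integral of $f$ against a fixed smooth weight in the velocity variable, so all I need is: (i) a weighted $L^\infty$ bound on $f$ with enough velocity decay for the $v$-integral to converge, and (ii) an equicontinuity estimate on $f$ in $(t,x)$ uniform in $v$ on a sufficiently thin slab so that integrating in $v$ against the weight $(n_x\cdot v)_+\lesssim\lv$ preserves Hölder continuity.

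For the $L^\infty$ bound \eqref{databdd}: apply the local estimate \eqref{bddf} of Proposition~\ref{local} (with $g=0$ there — or rather, treating the boundary datum trivially since we only use an interior-type bound near $\{t=0\}$ and away from the reflection boundary, using that $\Ge$ restricted to our slab is just the initial slice) at each $z_0=(t,x_0,v_0)$ with $t\in\I_1^\t$ and $v_0$ ranging over $\R^d$, to get $|f(z_0)|\lesssim \lvv^{\max\{1/2,(2+4d)/p\}}\|f\|_{L^p(\OO\cap B_2(z_0))}+\|s\|_{L^\infty}+\ldots$. With $p=2+4d$ we have $(2+4d)/p=1$, so $|f(t,x,v)|\lesssim \lv\,\|\lv^{?}f\|_{L^p}+\ldots$ locally; absorbing the weight loss and summing over a covering in $v$ with the extra decay $\lv^{q}$ available on $s$ and $\fin$, then multiplying by $(n_x\cdot v)_+\lesssim\lv$ and integrating $\dif v$ (convergent as soon as $q>1+d$, matching the hypothesis), gives \eqref{databdd}. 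I would be careful that the weight bookkeeping gives exactly $\lv^{q+1}$ inside the $L^p$ norm on the right.

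For the Hölder bound \eqref{dataholder}: under the additional hypothesis $\fin\in C^\b(\OO)$ and the compatibility of the initial data, apply the Hölder estimate \eqref{holderf} of Proposition~\ref{local} (again with the boundary datum playing no role, or absorbed) to get $[f]_{C^\a(\OO\cap B_1(z_0))}\lesssim \lvv^{1/2+\e}\|f\|_{L^\infty(\OO\cap B_2(z_0))}+\lvv^\e\|s\|_{L^\infty}+[\fin]_{C^\b}$ for $z_0=(t,x_0,v_0)$. Now for two boundary points $(t,x),(t',x')$ close together, write $\Upsilon(t,x)-\Upsilon(t',x')$; the nontrivial issue is that the integration domain and the weight $(n_x\cdot v)_+$ both depend on $x$ through $n_x$. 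Split the difference into: the change of $f$ (controlled by $[f]_{C^\a}$ times $(|t-t'|+|x-x'|)^\a$, integrated against $\lv\,\dif v$, convergent for $q$ large), plus the change of the weight $(n_x\cdot v)_+-(n_{x'}\cdot v)_+$, which is Lipschitz in $n$ hence $\lesssim |n_x-n_{x'}|\,\lv\lesssim |x-x'|\,\lv$ by $\p\O\in C^{1,1}$, integrated against $|f|\lesssim\lv^{1-q}$. Both pieces give $(|t-t'|+|x-x'|)^{\min\{\a,1\}}$ decay with the right constants, yielding \eqref{dataholder} after choosing the Hölder exponent to be the smaller of $\a$ from \eqref{holderf} and $1$.

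The main obstacle I anticipate is not any single estimate but the \emph{uniformity in $v_0$} together with the weight arithmetic: Proposition~\ref{local} loses powers of $\lvv$ (namely $\lvv^{1/2+\e}$ or $\lvv^{1}$), and one must check that the polynomial decay assumed on $s$ and $\fin$ (exponent $q$) is exactly enough to (a) feed \eqref{bddf}/\eqref{holderf} at every $v_0$, (b) still leave enough decay after multiplying by $(n_x\cdot v)_+\sim\lv$ for the $\dif v$-integral defining $\Upsilon$ (and its increments) to converge — which forces $q>1+d$ in the $L^\infty$ case and a comparable condition here. A secondary technical point is handling the $x$-dependence of the domain of integration $\{n_x\cdot v>0\}$: this is where $\p\O\in C^{1,1}$ enters, ensuring $x\mapsto n_x$ is Lipschitz so that the ``change of half-space'' contributes only a Lipschitz-in-$x$ error, which is dominated by the target Hölder modulus. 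I would also need to localize properly in time near $\tau$ via the slab $\I_t^\t$, which is routine given the forward-in-time propagation already used in Proposition~\ref{local}.
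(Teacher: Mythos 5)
Your starting point---flatten the boundary via Lemma~\ref{flattening}, express $\Upsilon$ as an integral of $\overline{f}=f\circ\SS^{-1}$ against the weight $w_d$ on $\{w_d>0\}$, and control the integrand with the local estimates of Proposition~\ref{local}---is the same as the paper's. But there is a genuine gap in your treatment of the grazing set $\G_0=\{n_x\cdot v=0\}$, which is where the entire difficulty is concentrated. You propose to apply \eqref{bddf}/\eqref{holderf} ``at each $z_0=(t,x_0,v_0)$ with $v_0$ ranging over $\R^d$'' at the fixed scale of radius-$1$/radius-$2$ balls, arguing that the boundary datum can be ``treated trivially.'' This does not work: for $x_0\in\p\O$ and $v_0$ nearly grazing (small $n_{x_0}\cdot v_0>0$), the fixed-radius ball $B_2(z_0)$ crosses $\G_-$, and Proposition~\ref{local} would then require the trace of $f$ on $\Si_T^-$---which is precisely what Lemma~\ref{diffusedata} is \emph{not} given (the hypothesis only fixes $f|_{t=0}=\fin$, not the datum on $\Si_T^-$; otherwise the lemma would be circular in the application to the diffuse reflection problem). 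Even where $\G_-$ is not touched, the effective radius at which a purely interior-type estimate (via the $\G_+$ reduction in Step 4 of Proposition~\ref{local}) is applicable degenerates like $r_0\approx (n_{x_0}\cdot v_0)/\lvv^2$, and the local boundedness constant then inherits a factor $r_0^{-1}\approx \lvv^2/w_d$ that blows up as $w_d\to 0^+$. Your bookkeeping of ``weight loss in $\lvv$'' does not see this singularity in $w_d$ at all.

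The mechanism that makes the paper's argument close is exactly the one you omit: the integrand of $\Upsilon$ carries the extra factor $w_d=(n_x\cdot v)_+$, which cancels the $r_0^{-1}\approx 1/w_d$ degeneracy. Concretely, the paper splits $\{w_d>0\}=\{w_d\ge1\}\cup\{w_d\in(0,1)\}$. Away from the grazing set ($w_d\ge1$) your fixed-scale argument is essentially fine and gives the $\|\lv^{q+1}f\|_{L^p}$ term. Near the grazing set, one applies the rescaled interior estimate at the degenerate scale $r_0=\hat w_d/(2\la\hat w\ra^2)$ (so the cylinder stays clear of $\G_-$ and the unknown trace), producing a bound $\lesssim \la\hat w\ra^{1/2} r_0^{-1}\|\overline f\|_{L^p}$, and only after multiplying by the weight $w_d$ does one obtain a finite, integrable quantity $\la w\ra^{q-2}\overline f\,w_d$, leading to $\|\lv^{q+1/2}f\|_{L^p}$. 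The Hölder estimate near $\G_0$ is likewise obtained by an interpolation between the scale-$r\le r_1\approx w_d/\la w\ra^2$ oscillation decay and a trivial difference bound for $|\x-\x'|>\nu r_1^3$, carrying the compensating factor $w_d^{1-3\a}$. Without identifying this cancellation, the $L^\infty$ bound on $\Upsilon$ and the Hölder bound both fail near $\G_0$.

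A smaller point: after the boundary flattening, the integration domain in the $w$-variable is the half-space $\{w_d>0\}$, which is \emph{independent} of the base point $x$; the $x$-dependence is entirely in the Lipschitz Jacobian factor $|(D\psi,1)|$ and in the substitution inside $\overline{f}$. So the ``change of half-space'' you flag as a secondary technical concern is actually rendered moot by the flattening you already invoked---there is no varying domain of integration to control.
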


\begin{proof}
We use the notation for the boundary flattening introduced in Lemma~\ref{flattening}. Recall that the diffeomorphism $P:(-R,R)^{d-1}\times(-R,0]\rightarrow U\cap\overline{\Omega}$, $y\mapsto x$, with the constant $R\in(0,1]$, is defined in \eqref{flatten}; the transformation $\SS:(t,x,v)\mapsto(t,y,w)$ is defined in \eqref{transformation}. Let us set 
\begin{align*}
\overline{f}:=f\circ\SS^{-1}
{\quad\rm in\ } \overline{\OO_T}. 
\end{align*}
For $\xi=(t,\check{y},0)\in [0,T]\times(-R,R)^{d-1}\times\{0\}$, the quantity $\Upsilon$ can be expressed as 
\begin{align*}
\Upsilon(t,P(\check{y},0))
=|(D\psi(\check{y}),1)| \int_{\{w_d>0\}} \overline{f}(\xi,w)\, w_d \dif w. 
\end{align*}

We thus have to acquire the regularity of the integrand above, which is expected due to the presence of the weight $w_d$. For fixed constant $\tau\in(0,T]$, we abbreviate the sets
\begin{align*}
\Z&:=\I_2^\tau\times(U\cap\Omega)\times\R^d,\\
\Xi&:=\I_1^\tau\times(-R/4,R/4)^{d-1}\times\{0\}, \\
\Pi&:=\I_{3/2}^\tau\times(-R/2,R/2)^{d-1}\times(-R/2,0]\times\R^d. 
\end{align*}
For $y_d=0$ and $w_d\ge1$, we are away from the grazing set. By using the facts that $q>1+d$ and the boundedness of $\SS$ and $\SS^{-1}$, and applying Proposition~\ref{local} with $p=2+4d$, we deduce that 
\begin{align}\label{w10}
\begin{aligned}
\sup_{\xi\in\Xi}\int_{\{w_d\ge1\}} \big|\overline{f}(\xi,w)\big|\, w_d \dif w 
\lesssim \sup_{(\xi,w)\in\Pi,\;\! w_d\ge1}\big|\la w\ra^q\overline{f}(\xi,w)\big|&\\
\lesssim \big\|\lv^{q+1}f\big\|_{L^p(\Z)} 
+ \|\lv^{q}s\|_{L^\infty(\Z)} 
+ \|\lv^{q}f\|_{L^\infty(\{t=0\}\cap\Z)}&;  
\end{aligned}
\end{align}
moreover, there is some universal constant $\alpha\in(0,1)$ such that 
\begin{align}\label{w11}
\begin{aligned}
\sup_{\xi\neq\xi'\in\Xi}\int_{\{w_d\ge1\}} |\xi-\xi'|^{-\alpha}\big|\overline{f}(\xi,w)-\overline{f}(\xi',w)\big| w_d \dif w
\lesssim \|\lv^{q}f\|_{L^\infty(\Z)} &\\
+ \|\lv^{q}s\|_{L^\infty(\Z)} + [f]_{C^\beta(\{t=0\}\cap\Z)}&. 
\end{aligned}
\end{align}

It now suffices to derive the regularity of $\overline{f}(\xi,w)$, with the weight $w_d$, near the grazing set, that is, $y_d=0$ and $w_d\in(0,1)$. To this end, we consider an arbitrary fixed point $\hat{z}:=(\hat{\xi},\hat{w})\in\Xi\times\R^d$ with $\hat{w}_d:=\hat{w}\cdot e_d\in(0,1)$. Based on the proof of Proposition~\ref{local} (see the derivation of the estimates \eqref{bddF}, \eqref{osc+} and \eqref{oscb}) with the dilation argument, for $r_0:=\hat{w}_d/(2\la\hat{w}\ra^2)$, we know the local boundedness estimate from $L^p$ to $L^\infty$ with $p=2+4d$ that 
\begin{align}\label{wbd}
\begin{aligned}
\big\|\overline{f}\big\|_{L^\infty(\Pi\cap Q_{r_0}(\hat{z}))} 
\lesssim \la\hat{w}\ra^{1/2}r_0^{-1}\big\|\overline{f}\big\|_{L^p(\Pi\cap Q_{2r_0}(\hat{z}))}  &\\
+ \big\|s\circ\SS^{-1}\big\|_{L^\infty(\Pi\cap Q_{2r_0}(\hat{z}))} 
+ \big\|f\circ\SS^{-1}\big\|_{L^\infty(\{t=0\}\cap\Pi\cap Q_{2r_0}(\hat{z}))} &.
\end{aligned}
\end{align}
and there is some universal constant $\alpha\in(0,1/3)$ such that for any $r\in(0,r_0]$, 
\begin{align}\label{wholder}
\begin{aligned}
{r}^{-3\alpha}\mathop{\osc}\limits_{(\xi,w)\in\Pi\cap Q_{{r}}(\hat{z})}\overline{f}(\xi,w)
\lesssim \la\hat{w}\ra^{1/2}r_1^{-3\alpha}\big\|\overline{f}\big\|_{L^\infty(\Pi\cap Q_{2{r}}(\hat{z}))}  &\\
+ r_1^{-3\alpha} \big\|s\circ\SS^{-1}\big\|_{L^\infty(\Pi\cap Q_{2{r}}(\hat{z}))} 
+ \big[f\circ\SS^{-1}\big]_{C^\beta(\{t=0\}\cap\Pi\cap Q_{2{r}}(\hat{z}))} &. 
\end{aligned}
\end{align}
It turns out from \eqref{wbd} and the arbitrariness of $\hat{z}\in\Xi\times\big\{\hat{w}\in\R^d:\hat{w}_d\in(0,1)\big\}$ that 
\begin{align*}
\begin{aligned}
\sup_{\xi\in\Xi}\int_{\{\hat{w}_d\in(0,1)\}} \big|\overline{f}(\xi,w)\big|\, w_d \dif w 
\lesssim \sup_{(\xi,w)\in\Pi,\;\!w_d\in(0,1)} \big|\la w\ra^{q-2}\overline{f}(\xi,w)\;\!w_d\big| &\\
\lesssim \big\|\lv^{q+1/2}f\big\|_{L^p(\Z)}
+\big\|\lv^{q-2}s\big\|_{L^\infty(\Z)} +\big\|\lv^{q-2}f\big\|_{L^\infty(\{t=0\}\cap\Z)}&.   
\end{aligned}
\end{align*}
Combining this with \eqref{w10} then yields the boundedness estimate \eqref{databdd} for $\Upsilon$. 

To obtain the H\"older estimate for $\Upsilon$, we apply \eqref{wholder} so that for any $w\in\R^d$ with $w_d\in(0,1)$ and $r_1:=w_d/(2\la w\ra^2)$, and any $\xi,\xi'\in\Xi$ such that $0<|\xi-\xi'|\le \nu r_1^3$,  
\begin{align*}
\begin{aligned}
\la w\ra^{q-2}|\xi-\xi'|^{-\alpha}\big|\overline{f}(\xi,w)-\overline{f}(\xi',w)\big|w_d 
\lesssim \sup_{(\xi,w)\in\Pi,\;\!w_d\in(0,2)} \big|\la w\ra^{q-3/2+6\alpha}\overline{f}(\xi,w)\;\!w_d^{1-3\alpha}\big| &\\
+\big\|\lv^{q-2+6\alpha}s\big\|_{L^\infty(\Z)}
+ [f]_{C^\beta(\{t=0\}\cap\Z)}&, 
\end{aligned}
\end{align*} 
where the constant $\nu\in(0,1)$ is universal. In addition, it is straightforward to check that for any $w_d\in(0,1)$ and $|\xi-\xi'|>\nu r_1^3$, 
\begin{align*}
\begin{aligned}
\la w\ra^{q-2}|\xi-\xi'|^{-\alpha}\big|\overline{f}(\xi,w)-\overline{f}(\xi',w)\big|w_d 
\lesssim \sup_{(\xi,w)\in\Pi,\;\!w_d\in(0,1)} \big|\la w\ra^{q-2+6\alpha}\overline{f}(\xi,w)\;\!w_d^{1-3\alpha}\big| . 
\end{aligned}
\end{align*}
Gathering the above two estimates and supposing $\alpha\le1/6$, we have 
\begin{align*}
\begin{aligned}
\sup_{\xi\neq\xi'\in\Xi}\int_{\{\hat{w}_d\in(0,1)\}} 
|\xi-\xi'|^{-\alpha}\big|\overline{f}(\xi,w)-\overline{f}(\xi',w)\big| w_d \dif w&\\
\lesssim \big\|\lv^{q-1/2}f\big\|_{L^\infty(\Z)} 
+ \big\|\lv^{q-1}s\big\|_{L^\infty(\Z)} + [f]_{C^\beta(\{t=0\}\cap\Z)}&. 
\end{aligned}
\end{align*}
Together with \eqref{w11}, we arrive at \eqref{dataholder}. This concludes the proof. 
\end{proof}

The following local-in-time estimate is an immediate consequence of Proposition~\ref{local} and Lemma~\ref{diffusedata}. Indeed, assisted by Lemma~\ref{diffusedata}, we see that the nonlocal reflection boundary problem is essentially the same as the inflow case. 

\begin{proposition}\label{localr}
Assume that the constants $q>1+d$, $\beta\in(0,1]$, $m\ge0$, and the functions $s,\fin$ satisfy $\lv^{m+q}s\in L^\infty(\OO_T)$, $\lv^{m+q}\fin\in L^\infty(\OO)$. Let $f$ be a weak solution to \eqref{FP} in $\OO_T$ associated with $f|_{t=0}=\fin$ in $\OO$ and $\gamma_-f=\NN  f$ in $\Sigma_T^-$, and $\lv^{m+(1+2d)(q+1)} f\in L^2(\OO_T)$. Then, for any $\tau\in(0,T]$ with $\I_t^\tau:=[\max\{0,\tau-t\},\tau]$, 
\begin{align}\label{bddr}
\begin{aligned}
\big\|\lv^mf\big\|_{L^\infty(\I_1^\tau\times\OO)}
\lesssim \big\|\lv^{m+(1+2d)(q+1)} f\big\|_{L^2(\I_2^\tau\times\OO)}&\\ 
+ \big\|\lv^{m+q}s\big\|_{L^\infty(\I_2^\tau\times\OO)}
+ \big\|\lv^{m+q}f\big\|_{L^\infty(\{t=0\}\cap(\I_2^\tau\times\OO))}&; 
\end{aligned} 
\end{align}
if additionally $\fin\in C^\beta(\OO)$ satisfies the compatibility condition $\gamma_-\fin=\gamma_-\NN\fin$, then there is some universal constant $\alpha\in(0,1)$ such that 
\begin{align}\label{holderr}
\begin{aligned}
\ [f]_{C^\alpha(\I_1^\tau\times\OO)}
\lesssim \|\lv^{q} f\|_{L^\infty(\I_2^\tau\times\OO)}
+ \|\lv^qs\|_{L^\infty(\I_2^\tau\times\OO)} 
+[f]_{C^\beta(\{t=0\}\cap(\I_2^\tau\times\OO))}&. 
\end{aligned}
\end{align} 
\end{proposition}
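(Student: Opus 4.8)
The plan is to regard $f$ as a solution of an \emph{inflow} boundary value problem whose incoming data is $g:=\NN f$ on $\Si_T^-$ and $g:=\fin$ on $\{0\}\times\overline{\OO}$, and then to invoke Proposition~\ref{local} (and the covering argument from the proof of Proposition~\ref{global}) once this self-induced data $g$ has been shown to possess the regularity required there. The point that makes this work is Lemma~\ref{diffusedata}: on $\Si_T^-$ one has $\NN f(t,x,v)=\M(t,x,v)\,\Upsilon[f](t,x)$, and since by \eqref{M} the weight $\M$ has all polynomial $v$-moments bounded and is $C^\b$ in $(t,x)$, the $v$-independent factor $\Upsilon[f]$ carries all the regularity of $g$ on $\Si_T^-$. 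Concretely, the product rule for Hölder seminorms over the bounded set $\p\O$ gives, for every $l\ge0$ and every $\a\in(0,\b]$, $\|\lv^l\NN f\|_{L^\infty(\Si_T^-\cap(\I_r^\t\times\R^d))}\lesssim\|\Upsilon[f]\|_{L^\infty(\I_r^\t\times\p\O)}$ and $[\NN f]_{C^\a(\Si_T^-\cap(\I_r^\t\times\R^d))}\lesssim\|\Upsilon[f]\|_{L^\infty(\I_r^\t\times\p\O)}+[\Upsilon[f]]_{C^\a(\I_r^\t\times\p\O)}$, and both right-hand sides are controlled by Lemma~\ref{diffusedata}.

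First I would verify that $g$ lies in the spaces demanded by Proposition~\ref{local}. On $\{0\}\times\overline{\OO}$ it equals $\fin$, which is in $L^2\cap L^\infty$, and in $C^\b$ under the extra hypothesis, because $q>1+d>d/2$; on $\Si_T^-$ it equals $\M\,\Upsilon[f]$, bounded with all $v$-moments and, under the extra hypothesis, Hölder, by the previous paragraph. Since $\NN f|_{t=0}=\NN\fin$, the compatibility condition $\g_-\fin=\g_-\NN\fin$ makes $g$ match continuously across the seam $\{0\}\times\G_-$, so $g\in C^\b(\Ge)$ (the exponent downgraded to the $\a$ produced by Lemma~\ref{diffusedata} if necessary), which is exactly what \eqref{holderf} needs.

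Next I would convert the $L^p$ norm of $\lv^{q+1}f$ on the right of \eqref{databdd} (with $p=2+4d$) into a weighted $L^2$ norm — this is where the exponent $(1+2d)(q+1)$ originates. Applying the local boundedness estimate Lemma~\ref{bdd} to the moment-weighted equation satisfied by $\lv^{q+1}f$, which has the same structure with bounded coefficients as in the proof of Lemma~\ref{poly}, on cylinders $Q_r(z_0)$ with $r\approx\lvv^{-1}$ (and embedding $L^2$ into the relevant Lebesgue space on each bounded cylinder), one gains integrability and gets $\|\lv^{q+1}f\|_{L^\infty(\I_{7/4}^\t\times\OO)}\lesssim\|\lv^{q''}f\|_{L^2(\I_2^\t\times\OO)}+\|\lv^{q''}s\|_{L^\infty(\I_2^\t\times\OO)}+\|\lv^{q''}f\|_{L^\infty(\{t=0\}\cap(\I_2^\t\times\OO))}$ for a universal $q''$; since $\lv^{q+1}f$ decays fast in $v$, this also bounds its $L^p$ norm. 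Feeding this into Lemma~\ref{diffusedata} (applied on a slightly enlarged nest of time windows, the adjustment absorbed into the constants) bounds $\|\Upsilon[f]\|_{L^\infty(\I_{3/2}^\t\times\p\O)}$ and, under the extra hypothesis, $[\Upsilon[f]]_{C^\a(\I_{3/2}^\t\times\p\O)}$ by a constant times the right-hand sides of \eqref{bddr} and \eqref{holderr}.

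Finally I would replace $f$ by $\lv^m f$ — again an equation of the same structure with bounded coefficients by the computation of Lemma~\ref{poly}, and whose incoming data $\lv^m g$ still obeys the bounds of the first paragraph because $\M$ absorbs the extra weight — apply Proposition~\ref{local} at every $z_0\in\I_1^\t\times\overline{\OO}$ through \eqref{bddf} and, under the extra hypotheses, \eqref{holderf}, with the norms of the data supplied by the preceding steps and the interior controlled by Lemma~\ref{interior} and Remark~\ref{localremark}, sum over a finite covering by the balls $B_1(z_0)$ (absorbing the $\lvv$-powers into the surplus weight $m+(1+2d)(q+1)$), and identify the resulting solution with $f$ by the uniqueness of Corollary~\ref{unique}; this yields \eqref{bddr} and \eqref{holderr}. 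The only genuinely nontrivial ingredient is Lemma~\ref{diffusedata} — recovering Hölder regularity of the flux $\Upsilon[f]$ from an $f$ that is merely bounded near the grazing set $\G_0$, which succeeds precisely because the kernel $(n_x\cdot v')_+$ vanishes exactly where $f$ fails to be regular; granting it, the present proof is a concatenation of already-established estimates together with the bookkeeping of polynomial velocity weights, the one point still requiring attention being to check there is no circular dependence — and there is none, since Proposition~\ref{local} and Lemma~\ref{diffusedata} each bound a stronger norm of $f$ by weaker norms of $f$.
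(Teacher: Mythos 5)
The overall framework you adopt — treat $\g_-f=\NN f$ as inflow data, recognize $\NN f=\M\,\Upsilon[f]$ so that Lemma~\ref{diffusedata} converts the nonlocal boundary condition into a Hölder data problem, then invoke Proposition~\ref{local} — is the paper's. However, the decisive intermediate step, converting the $L^p$ norm $\|\lv^{m+q+1}f\|_{L^p}$ ($p=2+4d$) coming out of Lemma~\ref{diffusedata} into the $L^2$ norm $\|\lv^{m+(1+2d)(q+1)}f\|_{L^2}$ appearing in \eqref{bddr}, is where your argument breaks. You claim to obtain $\|\lv^{q+1}f\|_{L^\infty(\I_{7/4}^\t\times\OO)}\lesssim\|\lv^{q''}f\|_{L^2(\I_2^\t\times\OO)}+\cdots$ with no boundary data on the right by applying Lemma~\ref{bdd} to the weighted equation on cylinders $Q_r(z_0)$ with $r\approx\lvv^{-1}$. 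But Lemma~\ref{bdd} is an interior estimate: for $z_0$ near $\p\O$ (and any $|v_0|$ not large) the cylinder exits the domain and the lemma does not apply. The up-to-boundary version is precisely \eqref{bddf} of Proposition~\ref{local}, which carries a term $\|g\|_{L^\infty(\Ge\cap B_2(z_0))}$ on the right; since $g=\NN f=\M\Upsilon[f]$ on $\Si_T^-$, this reintroduces $\|\Upsilon[f]\|_{L^\infty}$, which in turn is controlled by $\|\lv^{q+1}f\|_{L^p}$ via \eqref{databdd}. So the chain is circular, contrary to your closing claim: Lemma~\ref{diffusedata} does \emph{not} bound a stronger norm of $f$ by a weaker one — it trades the macroscopic $L^\infty$ of $\Upsilon$ for the microscopic $L^p$ of $f$ with $p>2$, and $L^p$ is not controlled by $L^2$ without further work.

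The paper closes the circle differently. From Proposition~\ref{local} and Lemma~\ref{diffusedata} one first gets
$\|\lv^m f\|_{L^\infty(\I_1^\t\times\OO)}\lesssim\|\lv^{m+q+1}f\|_{L^p(\I_2^\t\times\OO)}+\cdots$,
and then the $L^p$ norm is split by the interpolation inequality
$\|\lv^{m+q+1}f\|_{L^p}\le\|\lv^mf\|_{L^\infty}^{(p-2)/p}\|\lv^{m+(q+1)p/2}f\|_{L^2}^{2/p}$.
Young's inequality produces $\e\|\lv^mf\|_{L^\infty(\I_2^\t\times\OO)}+C_\e\|\lv^{m+(q+1)(1+2d)}f\|_{L^2(\I_2^\t\times\OO)}$; the small $L^\infty$ term is then absorbed using the observation that $\I_2^\t=\I_1^\t\cup\I_1^{\t-1}$, which permits an iteration in time windows terminating at $t=0$. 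Note that the exponent $(q+1)p/2=(q+1)(1+2d)$ is produced by this interpolation, whereas your proposed mechanism (local boundedness scaling on $\lvv^{-1}$-cylinders, contributing $\lvv^{1+2d}$) would give $q+2+2d\ne(1+2d)(q+1)$; the mismatch is another symptom of the missing absorption. If you replace the offending paragraph by this interpolation–absorption–iteration step, the remainder of your write-up (the reduction to inflow, the identification of $g$, the covering and the appeal to Corollary~\ref{unique}) is consistent with the paper's proof.
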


\begin{proof}
The combination of Proposition~\ref{local} and Lemma~\ref{diffusedata} implies that for the constant $p:=2+4d$ and for any $\tau\in(0,T]$, we have 
\begin{align*}
\begin{aligned}
\big\|\lv^{m}f\big\|_{L^\infty(\I_1^\tau\times\OO)}
\lesssim \big\|\lv^{m+q+1} f\big\|_{L^p(\I_2^\tau\times\OO)}&\\
+ \|\lv^{m+q}s\|_{L^\infty(\I_2^\tau\times\OO)}
+ \|\lv^{m+q}f\|_{L^\infty(\{t=0\}\cap(\I_2^\tau\times\OO))}&. 
\end{aligned} 
\end{align*}
To derive the estimate from $L^2$ to $L^\infty$, we write 
\begin{align*}
\big\|\lv^{m+q+1} f\big\|_{L^p(\I_2^\tau\times\OO)}
\le \big\|\lv^{m}f\big\|_{L^\infty(\I_2^\tau\times\OO)}^{(p-2)/p}
\big\|\lv^{m+(q+1)p/2}f\big\|_{L^2(\I_2^\tau\times\OO)}^{2/p} &\\
\lesssim \epsilon\|\lv^mf\|_{L^\infty(\I_2^\tau\times\OO)} 
+ \epsilon^{-2/(p-2)} \big\|\lv^{m+(q+1)p/2}f\big\|_{L^2(\I_2^\tau\times\OO)} &.
\end{align*}
By choosing $\epsilon>0$ sufficiently small, we obtain for some universal constant $C>0$, 
\begin{align*}
\begin{aligned}
\big\|\lv^mf\big\|_{L^\infty(\I_1^\tau\times\OO)}
\le \frac{1}{2}\|\lv^mf\|_{L^\infty(\I_2^\tau\times\OO)} 
+ C\big\|\lv^{m+(q+1)p/2}f\big\|_{L^2(\I_2^\tau\times\OO)} &\\
+ C\big\|\lv^{m+q}s\big\|_{L^\infty(\I_2^\tau\times\OO)}
+ C\big\|\lv^{m+q}f\big\|_{L^\infty(\{t=0\}\cap(\I_2^\tau\times\OO))} &. 
\end{aligned} 
\end{align*}
Notice that $\I_2^\tau=\I_1^\tau$ for $\tau\in(0,1]$, and $\I_2^\tau=\I_1^\tau\cup\I_1^{\tau-1}$ for $\tau\in[1,T]$. We thus conclude that for any $\tau\in(0,T]$, 
\begin{align*}
\begin{aligned}
\big\|\lv^mf\big\|_{L^\infty(\I_1^\tau\times\OO)}
\lesssim \big\|\lv^{m+(q+1)p/2}f\big\|_{L^2(\I_2^\tau\times\OO)} &\\
+ \big\|\lv^{m+q}s\big\|_{L^\infty(\I_2^\tau\times\OO)}
+ \big\|\lv^{m+q}f\big\|_{L^\infty(\{t=0\}\cap(\I_2^\tau\times\OO))}&, 
\end{aligned} 
\end{align*}
which is exactly \eqref{bddr} as asserted. 

Besides, the H\"older estimate~\eqref{holderr} is given by Proposition~\ref{local} and Lemma~\ref{diffusedata} directly. We point out that the compatibility condition $\gamma_-\fin=\gamma_-\NN\fin$ plays a role in verifying the H\"older continuity of $f|_{\Ge}$ near the initial time. 
\end{proof}

\subsection{Well-posedness result}
\begin{proposition}\label{globalr}
Let the constants $q>d+2$, $\beta\in(0,1]$, $m\ge q$, $l>m+(1+2d)q+d/2$, and the functions $s,\fin$ satisfy $\lv^ls\in L^\infty(\OO_T)$, $\lv^l\fin\in L^\infty(\OO)$. Then, there exists a unique bounded weak solution $f$ to \eqref{FP} in $\OO_T$ associated with $f|_{t=0}=\fin$ in $\OO$ and $\gamma_-f=\NN f$ in $\Sigma_T^-$, which satisfies 
\begin{align}\label{bddgr}
\begin{aligned}
\|\lv^mf\|_{L^\infty(\OO_T)} +\|\lv^{m+(1+2d)q} f\|_{C^0([0,T];L^2(\OO))}\\
\lesssim \|\lv^ls\|_{L^\infty(\OO_T)} + \|\lv^l\fin\|_{L^\infty(\OO)} &; 
\end{aligned} 
\end{align}
if additionally $\fin\in C^\beta(\OO)$ satisfies $\gamma_-\fin=\gamma_-\NN\fin$, then there is some universal constant $\alpha\in(0,1)$ such that 
\begin{align}\label{holdergr}
\begin{aligned}
\ [f]_{C^\alpha(\OO_T)}
\lesssim \|\lv^ls\|_{L^\infty(\OO_T)} + \|\lv^l\fin\|_{L^\infty(\OO)} +[\fin]_{C^\beta(\OO)}&. 
\end{aligned}
\end{align} 
\end{proposition}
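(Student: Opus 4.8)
The plan is to obtain the solution by a fixed-point/iteration argument in $L^2$, and then bootstrap to the $L^\infty$ and Hölder bounds using the a priori estimates of Proposition~\ref{localr}. First I would set up the iteration: starting from $f_0\equiv0$, define $f_{n+1}$ to be the unique weak solution to \eqref{FP} in $\OO_T$ with inflow data $\g_-f_{n+1}=\NN f_n$ on $\Si_T^-$ and $f_{n+1}|_{t=0}=\fin$, which exists by Corollary~\ref{globalin} provided $\NN f_n\in L^2(\Si_T^-,\dif\m)$. The latter integrability must be checked at each step; since $\M$ satisfies the moment bound \eqref{M}, $\NN f_n$ inherits strong velocity decay from $\M$ as soon as $\Upsilon[f_n](t,x)=\int f_n(t,x,v')(n_x\cdot v')_+\dif v'$ is finite, and the weight assumption $l>m+(1+2d)q+d/2$ is precisely what guarantees $\lv^{l'} f_n\in L^2$ for the auxiliary exponent $l'=m+(1+2d)(q+1)$ entering Proposition~\ref{localr} (after absorbing a $d/2$-loss from integrating $\dif v$ against a polynomial weight). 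I would first run the iteration at the level of the weighted $L^2$ energy estimate from Lemma~\ref{poly} (with $\phi=\lv^{l'}$), obtaining a uniform bound $\sup_n\|\lv^{l'}f_n\|_{C^0([0,T];L^2(\OO))}\lesssim\|\lv^l s\|_{L^\infty}+\|\lv^l\fin\|_{L^\infty}$, using that the boundary term $\int_{\Si_T}(n_x\cdot v)\lv^{2l'}f_n^2$ splits into an outgoing part (good sign) controlled against the incoming part $\int(n_x\cdot v)_-\lv^{2l'}(\NN f_{n-1})^2$, and the latter is $\lesssim\Lambda_{2l'}^2\int_{\Si_T}\Upsilon[f_{n-1}]^2$, which by Cauchy–Schwarz and the decay of the previous iterate is a small multiple (thanks to the large moment weight on $\M$ versus the boundedness of $\O$, or simply a Grönwall-absorbable term) of the energy of $f_{n-1}$.

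The second step is contraction. The difference $f_{n+1}-f_n$ solves \eqref{FP} with $s=0$, zero initial data, and inflow data $\NN(f_n-f_{n-1})$; running the same weighted energy identity and using linearity of $\NN$ gives
\begin{equation*}
\|\lv^{l'}(f_{n+1}-f_n)\|_{C^0([0,T];L^2(\OO))}^2
\lesssim \int_{\Si_T}(n_x\cdot v)_-\lv^{2l'}\big(\NN(f_n-f_{n-1})\big)^2
\lesssim \Lambda_{2l'}^2\int_0^T\!\!\int_{\p\O}\Upsilon[f_n-f_{n-1}]^2.
\end{equation*}
By Hölder in $v$, $\Upsilon[f_n-f_{n-1}](t,x)^2\lesssim\big(\int\lv^{-2d-2}\dif v\big)\int\lv^{2d+2}(f_n-f_{n-1})^2\lesssim\int_{\{x\}\times\R^d}\lv^{2l'}(f_n-f_{n-1})^2$, so the right side is controlled by $\int_{\OO_T}\lv^{2l'}(f_n-f_{n-1})^2$; a Grönwall argument on the interval $[0,T]$ — or, if the constant is not small, a subdivision of $[0,T]$ into finitely many short subintervals on each of which the map is a genuine contraction — produces a Cauchy sequence in $C^0([0,T];L^2(\OO,\lv^{2l'}))$ with a limit $f$. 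Passing to the limit in the weak formulation \eqref{weak} and in the trace relation (using the trace stability from Corollary~\ref{globalin}) shows $f$ is a weak solution with $\g_-f=\NN f$; uniqueness follows from the same contraction estimate applied to a hypothetical difference of two solutions (alternatively one can invoke Corollary~\ref{unique} after checking the trace is in $L^2_{\rm loc}$, which the limit inherits).

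The third step is the quantitative bounds, which is where Proposition~\ref{localr} does the work. Having the solution $f$ with $\lv^{l'}f\in L^2(\OO_T)$, I apply \eqref{bddr} with the exponent $m$ at each $\t\in(0,T]$: this gives $\|\lv^m f\|_{L^\infty(\I_1^\t\times\OO)}\lesssim\|\lv^{l'}f\|_{L^2(\I_2^\t\times\OO)}+\|\lv^{m+q}s\|_{L^\infty}+\|\lv^{m+q}\fin\|_{L^\infty}$; since $m\ge q$ and $l>m+(1+2d)q+d/2\ge m+q$ and $l\ge l'$, every right-hand term is dominated by $\|\lv^l s\|_{L^\infty(\OO_T)}+\|\lv^l\fin\|_{L^\infty(\OO)}$ via the already-established $L^2$ energy bound, yielding \eqref{bddgr} (the $C^0L^2$ part being exactly the Lemma~\ref{poly} estimate at weight $m+(1+2d)q$, which is $\le l'$). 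For \eqref{holdergr}, I first upgrade the initial-time data: the compatibility condition $\g_-\fin=\g_-\NN\fin$ together with $\fin\in C^\b(\OO)$ ensures $f|_{\Ge}$ is Hölder near $t=0$, so that \eqref{holderr} applies and gives $[f]_{C^\a(\I_1^\t\times\OO)}\lesssim\|\lv^q f\|_{L^\infty(\I_2^\t\times\OO)}+\|\lv^q s\|_{L^\infty}+[f]_{C^\b(\{t=0\}\cap\cdot)}$; combining with \eqref{bddgr} (which controls $\|\lv^q f\|_{L^\infty}$ since $m\ge q$) and $[f]_{C^\b(\{t=0\})}=[\fin]_{C^\b(\OO)}$, then covering $[0,T]$ by finitely many windows $\I_1^\t$, gives the global Hölder seminorm bound \eqref{holdergr}.

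The main obstacle I expect is not any single estimate but the bookkeeping of velocity weights throughout the iteration: one must verify that the weight budget $l>m+(1+2d)q+d/2$ is exactly enough so that (i) $\NN f_n$ lands in the right weighted $L^2$ boundary space at every step with a constant uniform in $n$, (ii) the auxiliary $L^2$ exponent $l'=m+(1+2d)(q+1)$ needed by Proposition~\ref{localr} is $\le l$ after the $\dif v$-integration loss of $d/2$, and (iii) the contraction constant can be made $<1$ (or tamed by time-subdivision) despite the moment loss in $\Upsilon$. The boundedness of $\O$ and the arbitrarily-high moment bounds \eqref{M} on $\M$ are the structural facts that make all three work simultaneously; the specular case (Proposition~\ref{globals}, not proved here) would instead lean on Corollary~\ref{globalre} and the mirror-extension trick, but for the nonlocal reflection operator $\NN$ the inflow-iteration scheme above is the natural route.
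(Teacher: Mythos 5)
Your iteration structure (solving a sequence of inflow problems with $\g_-f_{n+1}=\NN f_n$ via Corollary~\ref{globalin}, then bootstrapping via Proposition~\ref{localr} and Lemma~\ref{poly}) matches the paper in outline, and the bootstrap to \eqref{bddgr} and \eqref{holdergr} from a given solution is essentially correct. The gap is in the contraction step.

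You write
\begin{equation*}
\Upsilon[f_n-f_{n-1}](t,x)^2\lesssim\int_{\{x\}\times\R^d}\lv^{2l'}(f_n-f_{n-1})^2,\qquad\text{``so the right side is controlled by }\int_{\OO_T}\lv^{2l'}(f_n-f_{n-1})^2\text{''},
\end{equation*}
but the left side lives on $[0,T]\times\p\O$ and requires a trace of $f_n-f_{n-1}$, whereas the right side is an interior $L^2$ norm. No trace theorem gives this: $f_n-f_{n-1}$ is only $L^2$ in $(t,x)$ with $\nabla_v\in L^2$, so its restriction to $\p\O\times\R^d$ is not controlled by the interior norm alone. The only handle on the trace is the energy identity (Lemma~\ref{poly}), which bounds $\|\lv^\kappa\g(f_{n+1}-f_n)\|_{L^2(\Si_T,\dif\mu)}$ by $\|\lv^\kappa\NN(f_n-f_{n-1})\|_{L^2(\Si_T^-,\dif\mu)}$, and that incoming norm is in turn $\lesssim\|\Upsilon[f_n-f_{n-1}]\|_{L^2}\lesssim\|\lv^{\kappa'}\g(f_n-f_{n-1})\|_{L^2(\Si_T^+,\dif\mu)}$. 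This closes the loop with constant $O(1)$ and no smallness in $\tau$: both sides are trace integrals over the same time interval, so time subdivision does not help. Your ``Grönwall-absorbable term'' hedge is where the argument actually breaks.

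The paper resolves this precisely by invoking Lemma~\ref{diffusedata}, which you cite in the setup but do not use where it is indispensable. That lemma, built on the De Giorgi local boundedness estimate of Proposition~\ref{local}, bounds $\|\Upsilon[f]\|_{L^\infty(I_k\times\p\O)}$ by an \emph{interior} $L^p$ norm $\|\lv^{q+1}f\|_{L^p(\I_2^\t\times\OO)}$ (with $p=2+4d$). Passing from $L^\infty$ to $L^p$ in time on a short interval $I_k$ of length $\tau$ then yields the factor $\tau^{1/p}$ in estimate \eqref{estb}, and choosing $\tau$ small makes the iteration a genuine contraction in $C^0(I_k;L^p(\OO,\lv^{pq}))$. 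The entire argument must therefore run in $L^p$ with the $L^\infty$ control of $\Upsilon$ at each step; a purely $L^2$ energy scheme, as you propose, cannot produce the smallness needed for the fixed point.
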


\begin{remark}
The range of the constants $q,l$ above are not optimal. 
\end{remark}

\begin{proof}
The proof is based on the following iterative scheme of inflow boundary problems. By Corollary~\ref{globalin}, we assume that $f_n$ solves \eqref{FP} in $I_k\times\OO$, for the time interval $I_k:=[k\tau,(k+1)\tau]\cap[0,T]$ with $k\in\N$, 
\begin{align*}
f_n|_{t=0}=\fin{\quad\rm and\quad}
\gamma_-f_{n+1}=\NN f_{n}{\quad\rm for\ }n\in\N, \quad
\gamma_-f_0=0, 
\end{align*}
where the constant $\tau\in(0,1]$ is to be determined. Let us abbreviate $\OO^k:=I_k\times\OO$. In addition, we set $I_{-1}:=\{0\}$ and $\OO^{-1}:=\{0\}\times\OO$ for convenience. 

We first have to establish a priori estimates for $f_n$. For the constants $p:=2+4d$, $q>2+d$, $q_1>q+d/p$, applying Lemma~\ref{poly} implies that 
\begin{align*}
\begin{aligned}
\|\lv^q f_{n+1}\|_{C^0(I_k;L^p(\OO))} 
+ \|\lv^q f_{n+1}\|_{L^p(I_k\times\Gamma,|n_x\cdot v|)} &\\
\lesssim \|\lv^{q_1}s\|_{L^\infty(\OO^k)} 
+ \|\lv^{q}f_{n+1}\|_{L^p(\{k\tau\}\times\OO)}
+ \|\lv^{q}\NN f_{n}\|_{L^p(I_k\times\Gamma_-,|n_x\cdot v|)}&. 
\end{aligned}
\end{align*} 
By the definition of the operator $\NN$ and Lemma~\ref{diffusedata}, we have
\begin{align}\label{estb}
\begin{aligned}
\|\lv^{q}\NN f_{n}\|_{L^p(I_k\times\Gamma_-,|n_x\cdot v|)}
\lesssim \tau^{1/p} \big\|\lv^{q_1+1/p}\M\big\|_{L^\infty(I_k\times\Gamma)} \|\Upsilon[f_{n}]\|_{L^\infty(I_k\times\p\Omega)} &\\
\lesssim \tau^{1/p} \big\|\lv^{q} f_{n}\big\|_{C^0(I_k\cup I_{k-1};L^p(\OO))}
+ \tau^{1/p} \|\lv^{q}s\|_{L^\infty((I_k\cup I_{k-1})\times\OO)} &\\
+ \tau^{1/p} \|\lv^{q}f_{n}\|_{L^\infty(\{t=0\}\cap\OO^{k-1})}&. 
\end{aligned}
\end{align}
Combining the above two estimates and choosing $\tau$ sufficiently small yields that for some universal constant $C>0$, 
\begin{align}\label{estinb}
\begin{aligned}
\|\lv^q f_{n+1}\|_{C^0(I_k;L^p(\OO))}
+ \|\lv^q f_{n+1}\|_{L^p(I_k\times\Gamma,|n_x\cdot v|)} &\\
\le C\|\lv^{q_1}s\|_{L^\infty(\OO_T)} + C\|\lv^{q}f_{n+1}\|_{L^p(\{k\tau\}\times\OO)} 
+ \frac{1}{2}\|\lv^{q} f_{n}\|_{C^0(I_k;L^p(\OO))}&\\
+ \|\lv^{q} f_{n}\|_{C^0(I_{k-1};L^p(\OO))}
 + \|\lv^qf_{n}\|_{L^\infty(\{t=0\}\cap\OO^{k-1})} &. 
\end{aligned} 
\end{align}

In order to derive the convergence for $f_n$, we observe that the function $f_{n+1}-f_n$ also solves \eqref{FP} in $I_k\times\OO$ associated with $s=0$,  
\begin{align*}
(f_{n}-f_{n-1})|_{t=0}=0 {\quad\rm and\quad}
\gamma_-(f_{n+1}-f_n)=\NN (f_{n}-f_{n-1}){\quad\rm for\ }n\in\N_+. 
\end{align*}
It then follows from \eqref{estinb} that 
\begin{align}\label{cdn}
\begin{aligned}
\|\lv^{q}(f_{n+1}-f_{n})\|_{C^0(I_k;L^p(\OO))}  
+ \|\lv^q (f_{n+1}-f_{n})\|_{L^p(I_k\times\Gamma,|n_x\cdot v|)}&\\
\le C\|\lv^{q}(f_{n+1}-f_{n})\|_{L^p(\{k\tau\}\times\OO)}
+\frac{1}{2}\|\lv^{q} (f_{n}-f_{n-1})\|_{C^0(I_k;L^p(\OO))}  &\\
+ \|\lv^{q} (f_{n}-f_{n-1})\|_{C^0(I_{k-1};L^p(\OO))} &.
\end{aligned} 
\end{align}
Besides, with the aid of the energy estimate given by Lemma~\ref{poly} (with $p=1$ and $q=0$), as well as \eqref{estb}, we have  
\begin{align}\label{esten}
\begin{aligned}
\|\nabla_v(f_{n+1}-f_{n})\|_{L^2(\OO^k)} 
\!\lesssim \|\NN(f_{n+1}-f_{n})\|_{L^2(I_k\times\Gamma_-,|n_x\cdot v|)}\! +\|f_{n+1}-f_{n}\|_{L^2(\{k\tau\}\times\OO)}&\\
\lesssim \|\lv^q(f_{n}-f_{n-1})\|_{C^0(I_k;L^p(\OO))} +\|\lv^{q}(f_{n+1}-f_{n})\|_{L^p(\{k\tau\}\times\OO)}&. 
\end{aligned} 
\end{align}
In particular, from \eqref{cdn} and \eqref{esten} with $k=0$, we obtain by an iteration 
\begin{align}\label{cd0}
\begin{aligned}
\|\nabla_v(f_{n+1}-f_{n})\|_{L^2(\OO^0)} 
\lesssim \|\lv^{q}(f_{n+1}-f_{n})\|_{C^0(I_0;L^p(\OO))} &\\
\le\frac{1}{2}\|\lv^{q} (f_{n}-f_{n-1})\|_{C^0(I_0;L^p(\OO))} 
\le\frac{1}{2^n}\|\lv^{q} (f_{1}-f_{0})\|_{C^0(I_0;L^p(\OO))} &. 
\end{aligned} 
\end{align}
Thanks to \eqref{estinb}, the right hand side tends to zero as $n\rightarrow\infty$. We conclude the convergence of $f_n$ in $C^0(I_0;L^p(\OO,\lv^{pq}))$ and in $L^2(I_0\times\Omega;H^1(\R^d))$. We remark that the boundary identity $\gamma_-f=\NN f$ as the limit of $\gamma_-f_{n-1}=\NN f_n$ on $I_0\times\Gamma_-$ make sense, owing to \eqref{cdn}. To proceed the convergence of $f_n$ in $\OO^1$, we gather the estimates \eqref{cdn}, \eqref{esten}, \eqref{cd0} to see that  
\begin{align*}
\begin{aligned}
\|\nabla_v(f_{n+1}-f_{n})\|_{L^2(\OO^1)} 
\lesssim \|\lv^{q}(f_{n}-f_{n-1})\|_{C^0(I_1;L^p(\OO))} +\|\lv^{q}(f_{n+1}-f_{n})\|_{L^p(\{\tau\}\times\OO)}  &\\ 
\le \frac{1}{2}\|\lv^{q} (f_{n-1}-f_{n-2})\|_{C^0(I_1;L^p(\OO))}
+ \frac{C}{2^n}\|\lv^{q} (f_{1}-f_{0})\|_{C^0(I_{0};L^p(\OO))} &.
\end{aligned} 
\end{align*}
By iterating and sending $n\rightarrow\infty$, we acquire the solution $f$ to \eqref{FP} in $(I_1\cup I_0)\times\OO$. The global solution to \eqref{FP} in $\OO_T$ is then constructed by applying such arguments repeatedly. 

Next, by Lemma~\ref{poly} and Proposition~\ref{localr}, we get the global boundedness estimate   
\begin{align*}
\begin{aligned}
\|\lv^mf\|_{L^\infty(\OO_T)}
+\big\|\lv^{m+(1+2d)q}f\big\|_{C^0([0,T];L^2(\OO))}
\lesssim \big\|\lv^{m+(1+2d)q}s\big\|_{L^2(\OO_T)} &\\
+ \big\|\lv^{m+(1+2d)q}\fin\big\|_{L^2(\OO)}
+\|\lv^{m+q}s\|_{L^\infty(\OO_T)} + \|\lv^{m+q}\fin\|_{L^\infty(\OO)} &, 
\end{aligned} 
\end{align*}
which implies \eqref{bddgr} with the constant $l>m+(1+2d)q+d/2$. The global H\"older estimate~\eqref{holdergr} is also given by Proposition~\ref{localr}. 

Let us finally show the uniqueness for the weak solution $f$ to \eqref{FP} in $\OO_T$ with $s=\fin=0$ and $\gamma_-f=\NN f$. Using Lemma~\ref{poly} and Proposition~\ref{localr} yields that for any $t\in(0,T]$, 
\begin{align*}
\begin{aligned}
\big\|\lv^{(2+2d)q} f\big\|_{C^0([0,t];L^2(\OO))}
\lesssim \big\|\lv^{(2+2d)q}\NN f\big\|_{L^2(\Sigma_t^-,|n_x\cdot v|)}&\\
\lesssim \big\|\lv^{(2+2d)q+(1+d)/2}\M\big\|_{L^\infty(\Sigma_t)} \big\|\lv^{q}f\big\|_{L^\infty(\Sigma_t)} 
\lesssim  \big\|\lv^{(2+2d)q} f\big\|_{L^2(\OO_t)}&. 
\end{aligned}
\end{align*} 
By Gr\"onwall's inequality, we get $f=0$ in $\OO_T$ so that see the the uniqueness of solutions in the class $C^0([0,T];L^2(\OO,\lv^{(4+4d)q}))$. This finishes the proof. 
\end{proof}

\section{Specular reflection boundary problems}\label{specular}
Building upon the mirror extension technique developed in \cite{Nier}, \cite{GHJO}, we demonstrate the regularity for the specular reflection boundary problems, with the proof in a similar spirit to that presented in \cite{GHJO}. The solution to \eqref{FP} can be extended outside of the domain directly as a solution to a modified equation. The treatment of this type of boundary condition is thus simpler than the cases that we have already addressed. As in the previous two sections, we still assume that $\Omega$ is a bounded $C^{1,1}$-domain in $\R^d$. 

\begin{proposition}\label{locals}
Assume that the constants $\beta,\epsilon\in(0,1]$, and the functions $s\in L^\infty(\OO_T)$, $\fin\in L^\infty(\OO)$. Let $f$ be a weak solution to \eqref{FP} in $\OO_T$ associated with $f|_{t=0}=\fin$ in $\OO$ and $\gamma_-f=\RR f$ in $\Sigma_T^-$. Then, for any $z_0=(t_0,x_0,v_0)\in\overline{\OO_T}$, we have 
\begin{align*}
\begin{aligned}
\|f\|_{L^\infty({\OO_T}\cap B_1(z_0))} 
\lesssim \lvv^{1+2d}\|f\|_{L^2({\OO_T}\cap B_2(z_0))}\\ 
+\|s\|_{L^\infty({\OO_T}\cap B_2(z_0))} +\|f\|_{L^\infty(\{t=0\}\cap B_2(z_0))}&;  
\end{aligned} 
\end{align*}
if additionally $\fin\in C^\beta(\OO)$ satisfies the compatibility condition $\gamma_-\fin=\gamma_-\RR\fin$, then there is some universal constant $\alpha\in(0,1)$ such that 
\begin{align*}
\begin{aligned}
\ [f]_{C^\alpha({\OO_T}\cap B_{1}(z_0))}
\lesssim \lvv^{\epsilon}\|f\|_{L^\infty({\OO_T}\cap B_{2}(z_0))} &\\
+\lvv^{\epsilon}\|s\|_{L^\infty({\OO_T}\cap B_{2}(z_0))}
 + [f]_{C^\beta(\{t=0\}\cap B_{2}(z_0))}&. 
\end{aligned}
\end{align*} 
\end{proposition}

\begin{proof}
We first point out that the results of existence and uniqueness for the weak solution $f$ have been proved in Corollary~\ref{globalre}; moreover, we know that the trace $\gamma f$ is well-defined in $L^\infty(\Sigma_T)$. Let us now reduce the regularity estimate near $z_0\in\Sigma_T$ to the interior one by means of the mirror extension technique. Recall the coordinates $z=(t,x,v)\in[0,T]\times U\times B_1(v_0)$, and the transformation $\SS:z\mapsto\overline{z}=(t,y,w)$ defined in \eqref{transformation}; see also Lemma~\ref{flattening}. Let 
\begin{align*}
\U&:=[0,T]\times (U\cap\overline{\Omega})\times B_1(v_0),\\
\Y&:=\SS\left([0,T]\times U\times B_1(v_0)\right). 
\end{align*}
To extend the solution $f$ of $z\in\U$, we define the function $\widehat{f}$ of $\overline{z}\in\Y$ by 
\begin{align*}
&\widehat{f}:=\det\!\left(\frac{\p z}{\p\overline{z}}\right) f\circ\SS^{-1} {\quad\rm in\ }\Y\cap\{y_d\le0\},\\
&\widehat{f}:=\det\!\left(\frac{\p z}{\p\overline{z}}\right) f\circ\SS^{-1}\!\circ\Rm {\quad\rm in\ }\Y\cap\{y_d>0\},
\end{align*}
where we set the mirror reflection operator 
\begin{align*}
\Rm(t,y,w):=\left(t,\check{y},-y_d,\check{w},-w_d\right).
\end{align*}

Regarding to the boundary condition, by recalling \eqref{DP} and the identity $(D\mm)^T\nn=0$ whenever $y_d=0$, we have, for any $\check{y}\in(-R,R)^{d-1}$ and $w=(\check{w},w_d)\in \R^{d-1}\times\R$,
\begin{align*}
\begin{aligned}
P'\;\;\!\!\!\!(\check{y},0)(\check{w},-w_d)&= (D\mm;\nn)(\check{w},-w_d)^T = D\mm\,\check{w}-w_d\nn\\
&=\RR_x(D\mm\,\check{w}+w_d\nn) = \RR_x(P'\;\;\!\!\!\!(\check{y},0)w), 
\end{aligned}
\end{align*} 
where the specular reflection operator $\RR_x$ on $\p\Omega$ is defined by $\RR_xu:=u-2(n_x\cdot u)\;\!n_x$ for any $u\in\R^d$. Together with the boundary condition for $f$, we obtain  
\begin{align}\label{reflection}
\begin{aligned}
f\circ\SS^{-1}\!\circ\Rm(\overline{z})
&=f\left(t,P(\check{y},0),P'(\check{y},0)(\check{w},-w_d)\right)\\
&=f(t,x,\RR_x v)=f\left(t,x,v\right)=f\circ\SS^{-1}(\overline{z}), 
\end{aligned}
\end{align} 
which roughly means that $\widehat{f}$ is continuous across $\Y\cap\{y_d=0\}$.

We are now able to perform the same derivation as for \eqref{extendequation} in Step~\hyperref[step1]{1} of Subsection~\ref{inflowproof}. Indeed, it turns out that for any $\varphi\in C_c^1(\Y)$, 
\begin{align*}
\begin{aligned}
\int_{\Y\cap\{y_d<0\}\cap(\{t\}\times\OO)} \widehat{f}\, \varphi\;
-\int_{\Y\cap\{y_d<0\}\cap(\{0\}\times\OO)} \widehat{f}\, \varphi\;
+\int_{\Y\cap\{y_d=0\}} w_d\,\widehat{f}\, \varphi&\\
=\int_{\Y\cap\{y_d<0\}} \Big[\widehat{f}(\p_t+w\cdot\nabla_y) -\overline{A}\,\nabla_w\widehat{f}\cdot\nabla_w +\overline{B}\cdot\nabla_w \widehat{f} +\overline{c}\,\widehat{f} +\overline{s} \Big]\,\varphi&, 
\end{aligned}
\end{align*} 
where the coefficients $\overline{A},\overline{B},\overline{c},\overline{s}$ are defined in the region $\Y\cap\{y_d\le 0\}$ by  
\begin{align*}
\overline{A}&:=P'^{-1}(A\circ\SS^{-1}) P'^{-T},\\
\overline{B}&:=P'^{-1} B\circ\SS^{-1}\! -((D_xw)v) \circ\SS^{-1},\\
\overline{c}&:=c \circ\SS^{-1}\! - \nabla_w\cdot\left[((D_xw)v) \circ\SS^{-1}\right],\\
\overline{s}&:=\det\!\left(\frac{\p z}{\p\overline{z}}\right) s \circ\SS^{-1}. 
\end{align*} 
On account of this, applying change of variables twice with the relation that $\Rm\circ\Rm={\rm id}$, as well as using the boundary condition \eqref{reflection}, yields that
\begin{align*}
\int_{\Y\cap\{y_d>0\}\cap(\{t\}\times\OO)} \widehat{f}\, \varphi\;
-\int_{\Y\cap\{y_d>0\}\cap(\{0\}\times\OO)} \widehat{f}\, \varphi\;
-\int_{\Y\cap\{y_d=0\}} w_d\,\widehat{f} \varphi&\\
=\int_{\Y\cap\{y_d<0\}\cap(\{t\}\times\OO)} \widehat{f}\, \varphi\circ\Rm\,
-\int_{\Y\cap\{y_d<0\}\cap(\{0\}\times\OO)} \widehat{f}\, \varphi\circ\Rm\,
+\int_{\Y\cap\{y_d=0\}} w_d\,\widehat{f}\, \varphi\circ\Rm&\\
=\int_{\Y\cap\{y_d<0\}} \Big[\widehat{f}\,(\p_t+w\cdot\nabla_y)-\overline{A}\,\nabla_w\widehat{f}\cdot\nabla_w +\overline{B}\cdot\nabla_w \widehat{f} 
+\overline{c}\,\widehat{f} +\overline{s} \Big]\, \varphi\circ\Rm&\\
=\int_{\Y\cap\{y_d>0\}} \Big[\widehat{f}\,(\p_t+w\cdot\nabla_y) -\widehat{A}\,\nabla_w\widehat{f}\cdot\nabla_w +\widehat{B}\cdot\nabla_w \widehat{f} +\widehat{c}\,\widehat{f}+\widehat{s} \Big]\, \varphi&,
\end{align*} 
where with the notation of the $d\times d$ diagonal matrix $J:=\diag(1,\ldots,1,-1)$, we extended the coefficients $\overline{A},\overline{B},\overline{c},\overline{s}$ as $\widehat{A},\widehat{B},\widehat{c},\widehat{s}$ to the region $\Y\cap\{y_d> 0\}$ by setting   
\begin{align*}
\widehat{A}&:=J\big(\overline{A}\circ\Rm\big) J,\\
\widehat{B}&:=J\overline{B}\circ\Rm,\\
\widehat{c}&:=\overline{c}\circ\Rm,\\
\widehat{s}&:=\overline{s}\circ\Rm. 
\end{align*} 
In view of the above two formulations valid in $\Y\cap\{y_d<0\}$ and $\Y\cap\{y_d>0\}$, we conclude 
\begin{align}\label{sequation}
(\p_t+w\cdot\nabla_y)\widehat{f}
=\nabla_w\cdot\big(\widehat{A}\nabla_w\widehat{f}\;\!\big) +\widehat{B}\cdot\nabla_w\widehat{f}+ \widehat{c}\,\widehat{f}+\widehat{s}
{\quad\rm in\ }\Y. 
\end{align} 
By definition and Lemma~\ref{flattening}, it is readily checked that 
\begin{align*}
\begin{aligned}
\big\|((D_xw)v)\circ\SS^{-1}\big\|_{L^\infty(\Y)}
+\big\|\nabla_w\cdot\big[((D_xw)v)\circ\SS^{-1}\big]\big\|_{L^\infty(\Y)}
\lesssim \lvv^2. 
\end{aligned}
\end{align*}
We thus see that all the eigenvalues of $\widehat{A}$ lie in $[K^{-1},K]$, and $|\widehat{B}|, |\widehat{c}|$ are bounded by $K\lvv^2$, for some universal constant $K>1$. 

We next sketch the remaining part of the proof. As with the previous inflow problems, we have to take care of the coefficients of lower order terms. To derive the equation with coefficients bounded independent of $\lvv$, we consider the constant $r_0\approx\lvv^{-2}$ such that $Q_{2r_0}(z_0)\subset\U$. Then, the function $\widehat{F}:=\widehat{f}\circ\T_{\overline{z}_0,r_0}$ with $\overline{z}_0:=\SS(z_0)$ solves the same type equation as \eqref{sequation} in $Q_2$; see the same argument as in Step~\hyperref[step4]{4} of Subsection~\ref{inflowproof}. Using the interior estimate given by Lemma~\ref{interior} yields the H\"older estimate of $\widehat{f}$ in $B_{r_0}(z_0)$ for any $z_0\in\Sigma_T\cup(\{0\}\times\OO)$, which in turn gives the same regularity for $f$. Combining this with Lemma~\ref{interior} applied to $f$ in the interior region $\{z\in\OO_T: \dist(z,\Sigma_T\cup(\{0\}\times\OO))\ge r_0\}$, we obtain the H\"older estimate of $f$ in $\overline{\OO_T}$. We finally remark that the treatment around the initial point $z_0\in\{0\}\times\OO$ is the same as the one in Step~\hyperref[step4]{4} of Subsection~\ref{inflowproof}; see also \cite[Corollary 4.6]{YZ}. The $C^\beta$-H\"older regularity of $\widehat{f}|_{t=0}$ requires the conditions $\fin\in C^\beta(\OO)$ and $\gamma_-\fin=\gamma_-\RR\fin$. This completes the proof. 
\end{proof}

By the same derivations as Lemma~\ref{poly} and Proposition~\ref{global} for the inflow boundary problems, we have the following global estimates for specular reflection boundary problems. 
\begin{proposition}\label{globals}
Let the constants $q>0$, $\beta\in(0,1]$, and the functions $s$ and $\fin$ satisfy $\lv^qs\in L^2(\OO_T)$ and $\lv^q\fin\in L^2(\OO)$. Then, there exists a unique weak solution $f$ to \eqref{FP} in $\OO_T$ associated with $f|_{t=0}=\fin$ in $\OO$ and $\gamma_-f=\RR f$ in $\Sigma_T^-$, which satisfies 
\begin{align*}
\begin{aligned}
\|\lv^qf\|_{C^0([0,T];L^2(\OO))} +\|\lv^q\nabla_vf\|_{L^2(\OO_T)}
\lesssim \|\lv^qs\|_{L^2(\OO_T)} . 
\end{aligned}
\end{align*} 
Furthermore, if $q\ge 2d$, $\lv^{q-2d}s\in L^\infty(\OO_T)$ and $\lv^{q-2d}\fin\in L^\infty(\OO)$, then 
\begin{align*}
\begin{aligned}
\big\|\lv^{q-2d}f\big\|_{L^\infty(\OO_T)} 
\lesssim \|\lv^{q}s\|_{L^2(\OO_T)} + \big\|\lv^{q-2d}s\big\|_{L^\infty(\OO_T)} &\\
+ \|\lv^{q}\fin\|_{L^2(\OO)} + \big\|\lv^{q-2d}\fin\big\|_{L^\infty(\OO)}&;
\end{aligned} 
\end{align*}
and if $q>2d$ and $\fin\in C^\beta(\OO)$ with $\gamma_-\fin=\gamma_-\RR\fin$, then there is some universal constant $\alpha\in(0,1)$ such that 
\begin{align*}
\begin{aligned}
\big\|\lv^{q-2d}f\big\|_{L^\infty(\OO_T)} +[f]_{C^\alpha(\OO_T)}
\lesssim \|\lv^{q}s\|_{L^2(\OO_T)} + \big\|\lv^{q-2d}s\big\|_{L^\infty(\OO_T)} &\\
+\|\lv^{q}\fin\|_{L^2(\OO)} + \big\|\lv^{q-2d}\fin\big\|_{L^\infty(\OO)}
+ [\fin]_{C^\beta(\OO)}&. 
\end{aligned}
\end{align*} 
\end{proposition}

\appendix

\section{Optimality of the H\"older class}\label{counterexample}
We show the optimality of the H\"older class for classical solutions to \eqref{FP}  with constant coefficients and the absorbing boundary condition. Through the description in \cite[Claim~3.7]{HJV}, we make the construction here for the sake of completeness. 

Consider the steady Fokker-Planck equation 
\begin{align}\label{steady}
v\p_xf(x,v)=\p_v^2f(x,v),\quad (x,v)\in\R_+\times\R, 
\end{align}
with the absorbing boundary condition on $\{x=0,v>0\}$, that is, $f(0,v)=0$ for $v>0$. 

The goal is to construct the solution of the type $f(x,v)=x^\frac{1}{6}\Psi(\tau)$ with $\tau=-\frac{v^3}{9x}$, for the ansatz $\Psi:\R\rightarrow\R$. Plugging it into \eqref{steady} yields so-called Kummer's equation
\begin{align*}
\tau\Psi''(\tau)+\left(\frac{2}{3}-\tau\right)\Psi'(\tau)+\frac{1}{6}\Psi(\tau)=0, 
\end{align*}
whose two linearly independent solutions are $M\!\left(-\frac{1}{6},\frac{2}{3},\tau\right)$ and
$\tau^\frac{1}{3} M\!\left(\frac{1}{6},\frac{4}{3},\tau\right)$, for Kummer's function $M$; see \cite[Chapter~6]{Wangzhuxi}. Tricomi's function $\Psi(\tau)=\Psi\!\left(-\frac{1}{6},\frac{2}{3},\tau\right)$ is then given by their linear combination, 
\begin{align}\label{Tricomi}
\Psi(\tau):=\frac{\Upgamma\!\left(\frac{1}{3}\right)}{\Upgamma\!\left(\frac{1}{6}\right)} M\!\left(-\frac{1}{6},\frac{2}{3},\tau\right)
+ \frac{\Upgamma\!\left(-\frac{1}{3}\right)}{\Upgamma\!\left(-\frac{1}{6}\right)}\, \tau^\frac{1}{3} M\!\left(\frac{1}{6},\frac{4}{3},\tau\right), 
\end{align}
where $\Upgamma(\cdot)$ is the Gamma function. We have to use the asymptotic expansion of $M,\Psi$ when $|\tau|\rightarrow\infty$; see for instance \cite[Section~6.6]{Wangzhuxi}. On the one hand, for $\tau\rightarrow-\infty$,  
\begin{align*}
M\!\left(-\frac{1}{6},\frac{2}{3},\tau\right) 
= \frac{\Upgamma\!\left(\frac{2}{3}\right)}{\Upgamma\!\left(\frac{5}{6}\right)}
\,|\tau|^\frac{1}{6} \left[1+O\!\left(|\tau|^{-1}\right)\right], &\\
\tau^\frac{1}{3} M\!\left(\frac{1}{6},\frac{4}{3},\tau\right) 
=- \frac{\Upgamma\!\left(\frac{4}{3}\right)}{\Upgamma\!\left(\frac{7}{6}\right)}
\,|\tau|^{\frac{1}{6}} \left[1+O\!\left(|\tau|^{-1}\right)\right]. 
\end{align*}
Since $\Upgamma\!\left(\frac{1}{3}\right)\Upgamma\!\left(\frac{2}{3}\right)=-\Upgamma\!\left(-\frac{1}{3}\right)\Upgamma\!\left(\frac{4}{3}\right)$ and $\Upgamma\!\left(\frac{1}{6}\right)\Upgamma\!\left(\frac{5}{6}\right)=-\Upgamma\!\left(-\frac{1}{6}\right)\Upgamma\!\left(\frac{7}{6}\right)$, 
we see that the leading terms cancel out for $\tau\rightarrow-\infty$ so that $\Psi(\tau)=O\big(|\tau|^{-\frac{5}{6}}\big)$, which implies 
\begin{align*}
f(x,v)=O\big(xv^{-\frac{5}{2}}\big),{\quad\rm for\ } x^{-1}{v^3}\rightarrow\infty. 
\end{align*}
In particular, $f(0,v)=0$ for $v>0$. On the other hand, for $\tau\rightarrow\infty$, the asymptotic expansion of Tricomi's function $\Psi$ directly yields 
\begin{align*}
\Psi(\tau)=|\tau|^\frac{1}{6}\left[1+O\!\left(|\tau|^{-1}\right)\right],
\end{align*}
and hence
\begin{align*}
f(x,v)= 3^{-\frac{1}{3}}|v|^\frac{1}{2} +O\big(xv^{-\frac{5}{2}}\big),{\quad\rm for\ } x^{-1}{v^3}\rightarrow-\infty. 
\end{align*}
Besides, Kummer's functions $M\!\left(-\frac{1}{6},\frac{2}{3},\cdot\right)$ and $M\!\left(\frac{1}{6},\frac{4}{3},\cdot\right)$ are analytic on $\R$. It then follows from \eqref{Tricomi} that the solution $f(x,v)=x^\frac{1}{6}\Psi\big(\!-\frac{v^3}{9x}\big)$ is analytic for $x>0$ and $v\in\R$. It also turns out that for any $x\rightarrow0^+$ and $v\rightarrow0$ such that $x^{-1}{v^3}$ is bounded, we have $f(x,v)=O\big(x^\frac{1}{6}\big)$; more precisely, supposing $-\frac{v^3}{9x}\rightarrow c_0$, then 
\begin{align*}
f(x,v)=  x^\frac{1}{6}\Psi(c_0)+o\big(x^\frac{1}{6}\big). 
\end{align*}
Therefore, we conclude that the solution $f(x,v)$ to \eqref{steady} is not of the class $C^{\alpha_x,\alpha_v}_{x,v}$ near the boundary $\{x=0\}$ for any $\alpha_x>\frac{1}{6}$ or $\alpha_v>\frac{1}{2}$.

\end{document}